\newcommand{\mc}{\mathcal}
\newcommand{\pt}{\partial}
\newcommand{\br}{\mathbb{R}}
\newcommand{\bt}{\mathbb{T}}
\newcommand{\e}{\varepsilon}
\newcommand{\x}{\bar{x}}
\newcommand{\bb}{\mathbb}
\newtheorem{theorem}{Theorem}
\newtheorem{lemma}[theorem]{Lemma}
\newtheorem{cor}[theorem]{Corollary}
\newtheorem{prop}[theorem]{Proposition}
\newtheorem{defi}[theorem]{Definition}
\newtheorem{remark}[theorem]{Remark}
\newtheorem{hyp}[theorem]{Assumption}
\def\be{\begin{equation}}
\def\ee{\end{equation}}
\def\bea{\begin{eqnarray}}
\def\eea{\end{eqnarray}}
\newcommand{\NL}{{NL}}
\newcommand{\app}{\mathrm{app}}
\newcommand{\lin}{\mathrm{lin}}             
\newcommand{\T}{\mathbb{T}}            
\newcommand{\R}{\mathbb{R}}   
\newcommand{\N}{\mathbb{N}}   
\newcommand{\loc}{{\rm loc}}
\newcommand{\Lip}{{\rm Lip}}
\newcommand{\cF}{{\mathcal{F}}}
\numberwithin{theorem}{section}
\numberwithin{equation}{section}
\newcommand{\TT}{\mathbb{T}}
\newcommand{\Z}{\mathbb{Z}}
\newlist{Hlist}{enumerate}{1}
\setlist[Hlist]{label=(H\arabic*)}
\newlist{Hlistprime}{enumerate}{1}
\setlist[Hlistprime]{label=(H\arabic*)'}
\newlist{Hlist2prime}{enumerate}{1}
\setlist[Hlist2prime]{label=(H\arabic*)''}
\newcommand*\di{\mathop{}\!\mathrm{d}}
\title{Stability in Quasineutral Plasmas with Thermalized Electrons}
\author{
Megan Griffin-Pickering 
  \thanks{University of Z\"urich, Institute of Mathematics, Winterthurerstrasse 190, 8057 Z\"urich, Switzerland. Email: \textsf{megan.griffin-pickering@math.uzh.ch}}
  \and
Mikaela Iacobelli
  \thanks{ETH Z\"urich, Department of Mathematics, R\"amistrasse 101, 8092 Z\"urich, Switzerland. Email: \textsf{mikaela.iacobelli@math.ethz.ch}
  }
}
\begin{document}

\maketitle


\begin{abstract}
We study the quasineutral limit for the ionic Vlasov--Poisson system with thermalized electrons (VPME) on the torus in dimensions one to three, for rough solutions with bounded spatial density. Our main result is a quantitative stability theorem showing that quasineutral convergence is robust under exponentially small perturbations of the initial data, as measured in Wasserstein distance: given a regular family of reference solutions for which the quasineutral limit is known to hold, we prove that the same limit remains valid for perturbed solutions on the same time interval.

The proof combines a kinetic-Wasserstein stability framework with a refined analysis of the Poisson--Boltzmann coupling specific to VPME.
A central new ingredient is an improved control of the characteristic flow: we obtain quantitative bounds on the growth of characteristics in the velocity coordinate, with only polynomial deterioration in the Debye length.
This yields new locally-uniform-in-time bounds on the spatial density and provides the key input needed to complete the stability estimates.

These results bring the stability theory for the ionic model in the quasineutral regime close to the known instability threshold and substantially relax the smallness conditions required in earlier works.
As a byproduct, our approach improves the moment assumptions in the global well-posedness theory for bounded-density solutions to VPME on the torus.
\end{abstract}

\section{General Overview} \label{sec:intro}

\subsection{Vlasov--Poisson type systems.}

The Vlasov--Poisson system is the classical kinetic model describing dilute, totally ionized, unmagnetized plasma. In its most common form, the unknown $f$ is the distribution function of the electrons moving in a self-induced electrostatic field, while the ions are assumed to act as a fixed background.
Here, instead, we consider solutions of the \emph{Vlasov--Poisson system for ions}, also known as the \emph{Vlasov--Poisson system with massless or thermalized electrons} (\emph{VPME}):
\begin{equation}
\label{eq:vpme}
(VPME)_\e:= \left\{ \begin{array}{ccc}\pt_t f_\e+v\cdot \nabla_x f_\e+ E_\e\cdot \nabla_v f_\e=0,  \\
E_\e=-\nabla_{x} U_\e, \\
\e^2\Delta_{x} U_\e=e^{U_\e}- \int_{ \br^d} f_\e\, \di v=e^{U_\e}- \rho_\e,\\
f_\e\vert_{t=0}=f_{0,\e}\ge0,\ \  \int_{\bt^d \times \br^d} f_{0,\e} \di x \di v=1.
\end{array} \right.
\end{equation}
At each time $t \geq0$ the phase-space is assumed to be $\bt^d\times\br^d$, where $\TT^d$ denotes the unit flat torus $(\br / \Z)^d$, and $f = f(t,x,v)$ is the probability distribution function of ions with position $x$ and velocity $v$ (since the evolution \eqref{eq:vpme} preserves both non-negativity and the total mass $\int_{\bt^d \times \br^d} f_\e(t,x,v) \di x \di v = \int_{\bt^d \times \br^d}  f_{0,\e}(x,v) \di x \di v$, solutions with probability densities as initial data remain probability densities at all later times).
The notations $\nabla_x$, $\nabla_v$ respectively denote gradients with respect to the $x$ and $v$ variables only; similarly $\Delta_x = \sum_{j=1}^d \partial^2_{x_j}$ denotes the Laplacian in the $x$ variable only.  
The parameter $\varepsilon$ stands for the \textit{Debye length} of the plasma, whose role in the plasma's stability will be clarified later (see Section~\ref{sec:intro_QN} below).
The \emph{electric field} $E_\e = E_\e(t,x)$ is a function of time and position only, and is induced by the distribution of charges (both ions and electrons) within the plasma itself.
In the VPME system, electrons are \emph{thermalized} and therefore distributed according to a Maxwell--Boltzmann law $e^{U_\e}.$
Indeed, as the mass ratio between an electron and a proton is of order $10^{-3}$, the disparity between the relative masses of an electron and an ion justifies the approximation that the electrons are in thermal equilibrium. 
The interested reader is directed to the survey \cite{GPI-WPproceedings} for a more detailed overview of the background to the VPME model, including a formal derivation in the massless electrons limit and a discussion of the progress on rigorous results in this direction, such as \cite{BGNS18}; see also the more recent work \cite{FlynnGuo}.

In the physics literature, the VPME system \eqref{eq:vpme} has appeared in applications to, for example, the formation of
ion-acoustic shocks \cite{Mason71, SCM, BPLT1991} and the expansion of ion plasma into vacuum \cite{Medvedev2011}.
See Gurevich and Pitaevsky \cite{Gurevich-Pitaevsky75} for an introduction to the model \eqref{eq:vpme} from the point of view of astrophysics.

The Vlasov--Poisson system for ions has drawn interest from the mathematical community comparatively more recently than the more well-known Vlasov--Poisson system for electrons \cite{Vlasov}
The critical difference is that in the ion model \eqref{eq:vpme}, the electrostatic potential $U$ satisfies the nonlinear \emph{Poisson--Boltzmann} equation $\e^2 \Delta U = e^U - \rho$, rather than the linear Poisson equation $\e^2 \Delta U = 1 - \rho$. The exponential nonlinearity introduces several mathematical difficulties in the ion case that are not present in the electron case: the lack of an explicit convolution representation for the electric field precludes the use of techniques that rely on such a representation, such as were crucial in establishing global well-posedness in 3D $\bt^3$ \cite{Pfaffelmoser, Batt-Rein}, while many quantitative estimates are subject to exponential divergence of constants due to the form of the nonlinearity.

The effect of this can be seen, for example, in the respective development of the well-posedness theory for the electron and ion models. For the electron model, global existence of weak solutions was proved by Arsenev \cite{Arsenev} in the 70s, while the global well-posedness of classical solutions was obtained in dimension $d=2$ in the 70s by Ukai and Okabe \cite{Ukai-Okabe} and in $\br^3$ around the 90s by Pfaffelmoser \cite{Pfaffelmoser} and Lions and Perthame \cite{Lions-Perthame} using two different methods.
Since then, these results have been extended to the three-dimensional torus $\bt^3$ \cite{Batt-Rein}, and many works have refined the assumptions and techniques, such as \cite{Schaeffer, Loeper, Pallard, ChenChen} (this list is non-exhaustive, see \cite{GPI-WPproceedings} for a more detailed discussion about the well-posedness of Vlasov type systems).
In contrast, the solution theory for the ion model was developed only more recently: weak global solutions in $\br^3$ were obtained in the 90s by Bouchut \cite{Bouchut}, while global well-posedness theory for classical solutions in two and three dimensions was the subject of a series of recent works by the authors \cite{GPI-WP, GPI-WP-R3} ($x \in \bt^3, \br^3$) and Cesbron and the second author \cite{CesbronIac} ($x$ in bounded domains).

The first step in understanding the dynamics of the ion model has been to study the equation with linearized Poisson coupling, which led to the study of the 
 (partially) \emph{linearized VPME} system where the Vlasov equation is coupled with $- \e^2\Delta U_\e + U_\e = \rho_\e - 1$ (note however that even with this linear Poisson coupling, the resulting Vlasov system is still nonlinear and thus not truly `linearized').
This system is closely related to the \emph{screened Vlasov--Poisson} system, up to a difference in the scaling with respect to the Debye length: in the screened Vlasov--Poisson system, the potential $U$ satisfies $- \Delta U_\e + \e^{-2} U_\e = \rho_\e - 1$. Thus for $\e=1$ the screened Vlasov--Poisson and linearized VPME systems coincide. For more information about linearized VPME and screened VP see also \cite{SchaefferScreened, HKR, BedrossianMasmoudiMouhot, TrinhNguyen,  HKNguyenRousset2021, HoferWinter}. 

\subsection{The Quasineutral Limit and Kinetic Euler Equations} \label{sec:intro_QN}
Since plasmas are highly conductive, any developed charge imbalances are readily screened; thus, they can be treated as quasineutral at large scales. Conversely, quasineutrality is no longer verified at small spatial and time scales. The Debye length $\lambda_D$ is the distance over which quasineutrality may break down, and it varies according to the physical characteristics of the plasma.

The Debye length is usually considerably short compared to the typical observation scale -- in fact, this property is considered by some authors to be one of the defining characteristics that distinguishes a \emph{plasma} from a more general ionized gas \cite[Chapter 1, Section 1.4]{Chen}. Therefore, we can define the parameter $\e := \lambda_D/L$ and consider the limit as $\e$ tends to zero. This procedure is known as {\em quasineutral limit}. For the ion model, the limit is formally identified by setting $\e = 0$ in system \eqref{eq:vpme}.
This results in the following equation, known as the \emph{kinetic isothermal Euler} equation (KIsE):
\be \label{eq:KE-iso}
(KIsE) :=
\begin{cases}
\partial_t f + v \cdot \nabla_x f - \nabla_x U \cdot \nabla_v f = 0, \\
U = \log{\rho_f}, \\
f \vert_{t=0} = f_0, \, \, \,  \int_{\bt^d \times \br^d} f_0 (x,v) \di x \di v = 1.
\end{cases}
\ee

Strictly speaking, the relation $U=\log \rho_f$ is meaningful only in regimes where $\rho_f$ remains strictly positive, so \eqref{eq:KE-iso} should be understood in this class. In particular, the well-posedness results invoked later concern sufficiently regular solutions for which the spatial density stays bounded away from zero; see in particular the discussion below and \cite{HKR}.

KIsE is an example of a more general class of \emph{kinetic Euler} systems: a type of Vlasov equation with `very singular' potential that reduces to an (incompressible or compressible) Euler system in the case of \emph{monokinetic} solutions $f(t,x,v) = \rho(t,x) \delta_{v - u(t,x)}$ \cite{GPI-WPproceedings2}. These models arise, for example, as formal quasineutral limits from Vlasov--Poisson systems, but have also appeared in other contexts.
In the seminal paper \cite{Brenier1989}, Brenier considered the \emph{kinetic incompressible Euler} system (KInE) as a kinetic formulation of the incompressible Euler equations. In this case, the force $E=-\nabla_x U$ is implicitly defined through the incompressibility constraint $\rho=1$, and may be considered a Lagrange multiplier associated with this constraint. In particular, (KInE) also arises in the quasineutral limit of the electron Vlasov--Poisson system.

Another example of a kinetic Euler system is the \emph{Vlasov--Dirac--Benney} system (VDB) where the acceleration in the Liouville equation is $U = \rho_f.$
Bardos named this system VDB \cite{Bardos} due to a connection with the Benney equations for water waves through a formulation by Zakharov \cite{Zakharov}.
The VDB equation demonstrates perhaps most clearly the interpretation of kinetic Euler systems as Vlasov equations with very singular potential, since the potential $U$ may formally be written as $U = \delta \ast \rho_f$ \cite{Bardos, Bardos-Nouri}.

The VDB system is formally obtained in the quasineutral limit from the linearized VPME system.
In general, in performing the formal quasineutral limit, we pass from a transport system where the force field is given by a (possibly nonlinear) elliptic equation to a transport-type system coupled to a singular force field. Thus, it is clear that the Cauchy theory for the limit systems and the quasineutral limit are intimately related. It was shown in \cite{HKR} that VDB is locally well-posed in sufficiently high Sobolev regularity, provided that a Penrose-type stability criterion is satisfied (see Section~\ref{sec:PenroseStable} below). As explained in the introduction of \cite{HKR}, their techniques can be extended to prove similar local well-posedness for KIsE \eqref{eq:KE-iso} for strictly positive initial data. For a fuller discussion about these models and their Cauchy theory, see also the survey \cite{HanKwanHDR} and the research papers \cite{Bardos, Bardos-Nouri, Bardos-Besse, Jabin-Nouri}. On the other hand, the loss of derivatives in the limiting system is reflected in the presence of spectral instabilities in the linearized system and consequent ill-posedness of the complete system around any smooth linearly unstable profile \cite{Han-Kwan-Nguyen, Baradat}. Similar strong instability issues are expected to affect KIsE, since its linearized theory around spatially homogeneous profiles is identical to that of VDB.

Before concluding our digression on the limiting systems, let us mention that the VDB system also appears as the semiclassical limit of an infinite dimensional system of coupled nonlinear Schr{\"o}dinger equations \cite{Bardos-Besse, Bardos-Besse2015, Bardos-BesseSC}.
For a discussion about semiclassical limits involving the KIsE model, see \cite{Carles-Nouri, Ferriere}.
See also \cite{ PuelCPDE, PuelM2AN} for combined semiclassical and quasineutral limits.

\begin{paragraph}{Previous results on the quasineutral limit.}

The mathematical study of the quasineutral limit can be traced back to the first pioneering works of Brenier and Grenier \cite{BG, Grenier95} on the electron model, which used an approach based on defect measures and gave a mathematically rigorous description of the `plasma oscillations' which appear in the electron case. Grenier \cite{Grenier99} then showed further that the limit holds in the sense of strong convergence, in one dimension, for smooth `single bump' type profiles.
This structural assumption is critical in understanding the quasineutral limit.

Indeed, as observed by Grenier \cite{Grenier96, Grenier99},
an instability mechanism inherent to the physics, known as the \emph{two-stream} instability, 
presents an obstruction to the quasineutral limit.
It is well-known in plasma physics \cite{Chen} that velocity distributions with multiple sharp peaks, such as a beam injected into a bulk of lower energy plasma, are unstable profiles.
Solutions evolving from initial data that are perturbations of this `double bump' form exhibit phase-space vortices.
This behaviour is observed in both electron \cite{RobertsBerkPRL} and ion \cite{BPLT1991} models.
Mathematically this corresponds to the linearization of the Vlasov equation around this profile having an exponentially growing mode.
The connection between these growing modes and the structure of the distribution was investigated for the electron model by Penrose \cite{Penrose}, who gave a stability criterion that shows in particular that profiles with a single maximum have no exponentially growing modes, while profiles with sufficiently sharp minima, such as certain double-bump profiles, do have exponentially growing modes.
It is then reasonable to expect that these modes are an obstacle to the quasineutral limit due to the connection with a long-time limit.
Indeed, Han-Kwan and Hauray \cite{HKH} used these unstable modes to construct counterexamples to the quasineutral limit in arbitrarily high Sobolev regularity.
For ions, analogous criteria have been obtained in \cite{HKH, HKR}.

Other positive results were obtained for the electron model  in the `cold electron' case where the velocity distribution is a Dirac mass (a kind of `extreme single bump') by Brenier \cite{Brenier2000} and Masmoudi \cite{Masmoudi2001}. Han-Kwan \cite{Han-Kwan2011} obtained the limit for VPME \eqref{eq:vpme} in the corresponding `cold ions' setting.
For general data without structural conditions, a significant result was obtained by Grenier \cite{Grenier96}, showing that the quasineutral limit holds for initial data with uniformly analytic spatial regularity.
In a later breakthrough, Han-Kwan and Rousset \cite{HKR} proved the quasineutral limit from the linearized VPME system to Vlasov--Dirac--Benney in Sobolev regularity, under a Penrose-type structural condition. 
The quasineutral limit has also been studied in the context of magnetised plasmas \cite{Golse-SR2003, PuelSR}.

More recently, a new line of research has investigated the stability of the existing results on the quasineutral limit, when the initial data are perturbed with possibly rough perturbations, vanishing in the limit as $\e$ tends to zero.

\end{paragraph}

\subsection{The quasineutral limit with rough initial data}
\label{sect:QN rough}

The results we have discussed so far that prove the quasineutral limit for Vlasov--Poisson systems apply to initial data with high (analytic/Sobolev) regularity. However, both the ion and electron Vlasov--Poisson systems are globally well-posed (for $d\leq3$) for initial data with much lower regularity: e.g. for $d=3$, $(1 + |v|^k) f_{0, \e} \in L^1_+ \cap L^\infty (\bt^3 \times \br^3)$ for some $k > 6$ is known to suffice for the existence of a unique global solution with (locally-in-time) bounded spatial density $\rho_{f_\e} \in L^\infty_\loc \left ([0, + \infty) ; L^\infty(\bt^3) \right )$ \cite{Loeper, Pallard, GPI-WP}, and in fact the techniques of this article can be used to show that the moment requirement can be further lowered to $k > 3$. This raises the question of what happens to these solutions in the quasineutral regime.

A further motivation for considering such rough perturbations comes from particle approximations: empirical measures are naturally very irregular objects, and Wasserstein distances provide a natural way to compare them with smooth reference profiles. In particular, in dimension $d=1$, `weak-strong' stability estimates (such as those we will prove in this article) can be applied directly to empirical measure solutions of \eqref{eq:vpme}. This gives an additional reason to seek quasineutral stability estimates that remain valid under very weak assumptions on the perturbation.

The investigation of the quasineutral limit for rough data originated from the work of Han-Kwan and the second author in the one-dimensional case \cite{IHK1}. The idea is to consider data $f_{0,\e}$ as perturbations around a distribution $g_{0,\e}$ that satisfies the quasineutral condition, such as uniformly analytic distributions where Grenier's result \cite{Grenier96} can be applied; or, in the ionic case, sufficiently smooth distributions satisfying a uniform Penrose-type stability condition as in \cite{HKR}. The perturbed data $f_{0,\e}$ takes the form:
\be
f_{0,\e} = g_{0,\e} + h_{0,\e},
\ee
where the perturbation $h_{0,\e}$ is such that $f_{0,\e} \in L^\infty$: i.e., if $g_{0,\e}$ is smooth, then $h_{0,\e}$ may be $L^\infty$.
We may then ask: for which perturbations $h_{0,\e}$ is quasineutrality still valid, in the sense that the quasineutral limit holds for solutions with the initial data $f_{0,\e}$? Can we ensure that quasineutrality remains valid by taking $h_{0,\e}$ `sufficiently small'?

To formulate this question more precisely, the
magnitude of the perturbation is measured using Monge-Kantorovich (Wasserstein) distances (see Definition~\ref{def:Wp} below): we suppose that, for some function $\eta: \mathbb{R}_+ \rightarrow \mathbb{R}_+$ (say, non-decreasing, with $\lim_{\e \to 0} \eta(\e) = 0$),
\be \label{est:data-Wp}
W_p(f_{0,\e}, g_{0,\e}) \leq \eta(\e),
\ee
typically with $p$ chosen as $1$ or $2$. The objective is then to identify admissible functions $\eta$ such that the assumption \eqref{est:data-Wp} implies that the quasineutral limit holds for solutions with initial data $f_{0,\e}$. We emphasize that, because Wasserstein distances metrize a weak topology, the functions $f_{0,\e}$ can be very rough even if the functions $g_{0,\e}$ are smooth, and highly oscillatory perturbations are permitted. 

\subsubsection{Electrons}

In \cite{HKH}, Han-Kwan and Hauray demonstrated that the quasineutral limit can fail with $\eta(\epsilon)\sim \epsilon^N$ for any given $N>0$, even with $g_{0,
\e}$ chosen in ``the best possible way'' as a spatially-homogeneous stationary solution $\mu(v)$. In other words, the validity of the quasineutral limit is {\em unstable} (not preserved) under polynomially small perturbations of analytic data. In fact, the result is false for data $f_{0,\e}$ such that $h_{0,\e}$ is polynomially small in an arbitrarily strong Sobolev space, see also Remark~\ref{rmk:almost opt} below.
 
 In contrast to this negative outcome, Han-Kwan and the second author \cite{IHK1} established the validity of the quasineutral limit for the one-dimensional electron Vlasov--Poisson system under the condition 
\be \label{est:data-vp-1d}
W_1(f_{0,\e}, g_{0,\e}) \leq \exp{(- C \e^{-1})}.
\ee
This exponential form is essentially optimal, since the counterexamples from \cite{HKH} show that no analogous statement can be true for any polynomially vanishing $\eta$.

In the higher-dimensional setting $d=2,3$, in \cite{IHK2} it was shown that the quasineutral limit holds under the condition:
\be
W_2(f_{0,\e}, g_{0,\e}) \leq \left [\exp{\exp (C \e^{-\zeta})} \right ]^{-1},
\ee
where $\zeta > 0$ is an exponent that depends on the dimension.
More recently, in \cite{Iac22}, the second author improved upon the previous result and achieved the validity of the quasineutral limit under the essentially optimal condition:
\be
W_2(f_{0,\e}, g_{0,\e}) \leq {\exp (-C \e^{-\zeta})}.
\ee

We summarize these results in Table~\ref{tab:electrons}.

\begin{table}[!h]
\centering

\begin{tabular}{|c|c|c|}
        \hline  & $d=1$ & $d=2,3$  \\
        \hline  Stability&  {$\exp{(-C \e^{-1})}$ \,\,\,{\tiny{ Han-Kwan \& Iacobelli \cite{IHK1}}} } &  {$\left [\exp \exp{(C \e^{-\zeta(d)})} \right ]^{-1}$ \,\,\, {\tiny{Han-Kwan \& Iacobelli  \cite{IHK2}  } } }\\
        &&
$\exp{(-C \e^{-\zeta(d)})}$
\,\,\,
{\tiny{Iacobelli \cite{Iac22}}}
  \\
        \hline  Instability & $\e^N$  \,\,\,{\tiny{Han-Kwan \& Hauray \cite{HKH}}} & $\e^N$  \,\,\,{\tiny{Han-Kwan \& Hauray \cite{HKH}}}  \\
        \hline
\end{tabular}

\caption{Stability properties of the quasineutral limit for {\bf electrons} under rough perturbations.} \label{tab:electrons}
\end{table}

\subsubsection{Ions}
\label{sect:QN-VPME}
As for the electron case, the quasineutral limit for the ionic Vlasov--Poisson system \eqref{eq:vpme} is false for polynomially small perturbations: this was shown by Han-Kwan and Hauray \cite{HKH} for ions with linearized coupling, and we show in Appendix~\ref{app:Instability} that this method can be extended to the nonlinear coupling. Thus at least exponential smallness must be required.
In \cite{IHK1} Han-Kwan and the second author established the validity of the quasineutral limit for the one-dimensional ionic Vlasov--Poisson system \eqref{eq:vpme} under the condition
\be
W_1(f_{0,\e}, g_{0,\e}) \leq \left [\exp{\exp (C \e^{-2})} \right ]^{-1},
\ee
where $g_{0,\e}$ are initial data for which the quasineutral limit holds, for example uniformly analytic distributions as in \cite{Grenier96}, or Sobolev data satisfying a uniform Penrose condition as in \cite{HKR}.

Later, in \cite{GPI-MFQN}, the authors proved the quasineutral limit in dimensions $d=2,3$ under restrictive assumptions on the smallness of the perturbation, expressed as:
\be
\label{eq:4exp}
W_2(f_{0,\e}, g_{0,\e}) \leq \left [\exp \exp \exp \exp{(C \e^{-2})} \right ]^{-1}.
\ee
Finally, in this paper we have succeeded in completing the program by obtaining the validity of the quasineutral limit for ions under an essentially optimal exponential smallness assumption, as for the electron case; see Table~\ref{tab:ions} (and compare Table~\ref{tab:electrons}).

\begin{table}[h]
\centering

\begin{tabularx}{\textwidth}{|c|X|X|}
\hline  & $d=1$ & $d=2,3$  \\
        \hline  Stability&   {$\left [\exp \exp{(C \e^{-2})} \right ]^{-1}$  \newline {\tiny{Han-Kwan \& Iacobelli  \cite{IHK1}}} } &   {$\left [\exp \exp \exp \exp{(C \e^{-2})} \right ]^{-1}$ \newline {\tiny{Griffin-Pickering \& Iacobelli \cite{GPI-MFQN}} }}    \\
        &{ $\exp(- C \e^{-2})$} \newline {\tiny{Theorem~\ref{thm:main} } }&
{$\exp(- C \e^{-\zeta(d)})$} \newline {\tiny{Theorem~\ref{thm:main}  } }
  \\
        \hline  Instability & $\e^N$  \,\,\,{\tiny{Han-Kwan \& Hauray \cite{HKH}}} & $\e^N$  \,\,\,{\tiny{Han-Kwan \& Hauray \cite{HKH}}; Appendix~\ref{app:Instability}}  \\
        \hline
\end{tabularx}

\caption{Stability properties of the quasineutral limit for {\bf ions} under rough perturbations.}
\label{tab:ions}
\end{table}

For more details on the results discussed above, we refer to the survey \cite{GPI-WPproceedings2}.

\subsection{Main Result}

We begin by stating our main result, which establishes the stability of the quasineutral limit under sufficiently small rough perturbations, in the general setting.
Following this, we state corollaries of our result in two specific instances in which the quasineutral limit is known to hold: the analytic setting in the style of Grenier \cite{Grenier96}, and the Penrose stable case considered by Han-Kwan and Rousset \cite{HKR}.

In the following statement, and throughout the paper, $W_1$ denotes the first-order Wasserstein distance (see Definition~\ref{def:Wp} below). 
Our main result holds under the following hypotheses.

\begin{hyp} \label{hyp:main}
Let $1\leq d \leq3$.
Assume that there exists $T_\ast, \e_\ast > 0$ and, for each $\e \in (0, \e_\ast]$, a weak solution $g_\e$ of the Vlasov--Poisson system for ions \eqref{eq:vpme} on the time interval $[0, T_\ast]$ with initial datum a probability measure $g_{0,\e}$, such that the following hypotheses are satisfied.

\begin{Hlist} \item \label{hyp:regular-moments} For each $\e \in (0, \e_\ast]$, the spatial density $\rho[ g_\e ]$ belongs to the space $L^1 \left ( [0, T_\ast] ; L^\infty(\bt^d) \right )$, and there exists $C_0 > 0$ such that
\be \label{hyp:density-uniform}
\sup_{\e \in (0, \e_\ast]} \int_0^{T_\ast} \left \| \rho[ g_\e ] \right \|_{L^\infty(\bt^d)} \leq C_0 .
\ee
In the case $d=2$ or $3$, we assume furthermore that, for some $j_0 > 2$, the velocity moments of order $j_0$ are bounded uniformly in $\e$:
\be
\sup_{\e \in (0, \e_\ast]}  \int_{\bt^d \times \br^d}  |v|^{j_0} \di g_{0,\e} (x,v) \leq C_0 .
\ee

\end{Hlist}
\begin{Hlist}[resume]
\item \label{hyp:regular-convergence}
The solutions $\{ g_{\e} \}_{\e \leq \e_\ast}$ 
converge as $\e$ tends to zero 
to a weak solution $g$ of the kinetic isothermal Euler system \eqref{eq:KE-iso}, in the sense that
\be
\lim_{\e \to 0} \sup_{t \in [0, T_\ast]} W_1 \left (g_\e(t), g(t) \right ) = 0 .
\ee
\end{Hlist}

Consider rough perturbations $\{ f_{0,\e} \}_{0<\e \leq \e_\ast}$ of the initial data satisfying the following hypotheses.
\begin{Hlist}[resume]
\item  \label{hyp:moments}
$\{ f_{0,\e} \}_{\e \leq \e_\ast}$ are either:
\begin{itemize}
\item In the case $d=1$, probability measures with finite first moment in velocity: \\
 for each $\e \in (0,\e_\ast]$, $\int_{\bt \times \br} |v| \di f_{0,\e} ( x, v) < +\infty$; or
\item In the case $d=2$ or $3$, $L^\infty$ probability density functions such that, for some $k_0 > d$,
\be
 \sup_{0 < \e \leq \e_\ast} \| f_{0,\e} (1 + |v|)^{k_0}  \|_{L^1 \cap L^\infty} \leq C_0 .
\ee
\end{itemize}
\end{Hlist}
\end{hyp}

\begin{theorem} \label{thm:main}
Suppose that Assumption~\ref{hyp:main} holds. Then there exists a constant $C_\ast > 0$ (depending on $C_0$, $T_\ast$, $j_0$ and $k_0$) such that, if
\be \label{hyp:rate} 
W_1(f_{0, \e}, g_{0,\e}) \leq \exp{(-C_\ast \e^{-\zeta})}, \qquad 
\zeta = \begin{cases}
 2 & d=1 \\
11 & d=2 \\
62 & d=3, k_0 \geq 13/4 \\
14 +  \frac{12}{k_0 - 3}  & d=3, k_0 < 13/4,
\end{cases}
\ee
then
\be
\lim_{\e \to 0} \sup_{t \leq T_\ast} W_1(f_\e(t), g(t)) = 0,
\ee
where, for each $\e \in (0, \e_\ast)$, $f_\e$ is ($d=1$) any weak solution or ($d=2,3$) the unique global bounded density solution of the $(VPME)_\e$ system \eqref{eq:vpme} with initial datum $f_{0,\e}$.

\end{theorem}

\begin{remark}
Since both the KIsE \eqref{eq:KE-iso} and ionic Vlasov--Poisson \eqref{eq:vpme} systems conserve total mass, under our hypotheses both $f_\e(t)$ and $g(t)$ are probability densities for all $0 \leq t \leq T_\ast$, and the Monge-Kantorovich-Wasserstein distance $W_1(f_\e(t), g(t))$ is therefore meaningful.
\end{remark}

\begin{remark} \label{rmk:Improvements}
The main improvements achieved in Theorem~\ref{thm:main} compared to the most recent results on this problem can be summarized as follows:
\begin{enumerate}[label=(\roman*)]
\item The most significant enhancement is related to assumption \eqref{hyp:rate} concerning the size of the perturbation. We are able to replace the previous requirement of a {\bf quadruple-exponential} smallness condition \eqref{eq:4exp} with an almost optimal condition involving a {\bf \emph{single} exponential} (see Remark~\ref{rmk:almost opt} below).
\item \label{item:support} In the previous work \cite{GPI-MFQN}, we required that $f_{0,\e}$ have uniformly bounded energy $\mc{E}_\e[f_\e]$ (defined in Equation~\ref{def:Energy} below) and $L^\infty$ norm, as well as having compact support in velocity, with a bound on the rate of growth as $\e$ tends to zero: for a certain function $R(\e)$,
\be \label{hyp:support-growth}
f_{0,\e}(x,v) = 0 \qquad |v| > R(\e) .
\ee
In this work, these requirements have been replaced with assumption \ref{hyp:moments}, which is a uniform-in-$\e$ version of the minimal assumptions currently known for the well-posedness of the VPME system \cite{GPI-WP}. Notably, the data no longer need to have compact support. 
Furthermore, assumption \ref{hyp:moments} implies that the energy $\mc{E}_{\e}[f_\e]$ is uniformly bounded, eliminating the need for a separate assumption. It is also possible to formulate a statement involving a condition on the support similar to \eqref{hyp:support-growth}, while retaining the single exponential structure in \eqref{hyp:rate}, although we omit it here.
\end{enumerate}
\end{remark}

\begin{remark}
In fact the uniform bound \eqref{hyp:density-uniform} can be weakened to allow the $L^1_t L^\infty_{x}$ norm of $\rho[g_\e]$ to grow at a controlled rate as $\e$ tends to zero: if we replace assumption \eqref{hyp:density-uniform} with
\be
\sup_{\e \in (0, \e_\ast]} \;  \e^{\zeta/2 - 1} \int_0^{T_\ast} \left \| \rho[ g_\e ] \right \|_{L^\infty(\bt^d)} \leq C_0 ,
\ee
then the same conclusion holds. 
In the examples we will present, the solution class of the `regular' solutions $g_\e$ will be relatively strong and the uniform bound \eqref{hyp:density-uniform} thus satisfied.
\end{remark}

Building upon this general result, we now discuss two classes of assumptions on the regular part of the initial data $g_{0,\e}$ under which hypothesis \ref{hyp:regular-convergence} is known to hold.

\subsubsection{Spatially Analytic Case}

As was shown by Grenier \cite{Grenier96} the quasineutral limit is known to hold when the initial data are uniformly analytic with respect to the spatial variable $x$. 
In this case no further structural assumption or regularity in the $v$ variable is required. Theorem~\ref{thm:main} shows that this version of the quasineutral limit is stable with respect to rough perturbations that vanish exponentially quickly in $W_1$ as $\e$ tends to zero.

In order to state this corollary, we first recall the definition of the following analytic norm for functions defined on $\bt^d$: for $\delta > 1$, let
\be
\| g \|_{B_\delta} : = \sum_{k \in \bb{Z}^d} | \cF_x g (k)| \delta^{|k|} ,
\ee
where $\cF_x g (k)$ denotes the Fourier coefficient of $g$ (with respect to the spatial variable $x \in \bt^d$) with index $k \in \bb{Z}^d$.

\begin{cor}[Analytic setting] \label{cor:analytic}

Let $\{ g_{0,\e}\}_{\e \leq 1}$ satisfy, for
$k_0 > d$, $C_0 >0$, $\delta > 1$ and sufficiently small $\kappa_0 > 0$,
\begin{Hlistprime}
\item \label{hyp:analytic}
\begin{align}
& \sup_{\e \leq 1} \sup_{v \in \br^d} (1 + |v|^{k_0}) \| g_{0,\e}(\cdot, v) \|_{B_\delta} \leq C_0 \\
& \sup_{\e \leq 1} \left \| \int_{\br^d} g_{0,\e}(\cdot, v) \di v - 1 \right \|_{B_{\delta}} \leq \kappa_0 ,
\end{align}
\item \label{hyp:analytic-convergence} $g_{0,\e}$ converges to a limit $g_0$ in the sense of distributions as $\e$ tends to zero.
\end{Hlistprime}

Then there exists $T_\ast > 0$, $C > 0$, and a solution $g$ of the KIsE system \eqref{eq:KE-iso} with initial datum $g_0$ such that, for all 
$\{ f_{0,\e} \}_{\e \leq 1}$ measures (case $d=1$) or non-negative $L^\infty$ functions (case $d=2,3$) satisfying \ref{hyp:moments} and \eqref{hyp:rate}, then
\be
\lim_{\e \to 0} \sup_{t \leq T_\ast} W_1(f_\e(t), g(t)) = 0 .
\ee
\end{cor}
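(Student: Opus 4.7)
The strategy is to deduce Corollary~\ref{cor:analytic} from Theorem~\ref{thm:main} by using the analytic assumptions \ref{hyp:analytic}--\ref{hyp:analytic-convergence} to construct a family of regular solutions $\{ g_\e \}$ of \eqref{eq:vpme} satisfying hypotheses \ref{hyp:regular-moments} and \ref{hyp:regular-convergence}. Once these are in place, the main theorem applies directly to any rough perturbation $f_{0,\e}$ satisfying \ref{hyp:moments} and \ref{hyp:rate}.

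The first step is a Grenier-type analytic existence theorem for VPME. Following \cite{Grenier96}, one writes $g_\e$ via the method of characteristics and closes a nonlinear estimate in the scale of analytic norms $B_\delta$ on a short time interval. The smallness of $\kappa_0$ in \ref{hyp:analytic} ensures that the Poisson--Boltzmann potential, despite its exponential nonlinearity, can be linearised around the quasineutral state and estimated in $B_{\delta'}$ for some $\delta' \in (1, \delta)$; this is the principal technical point needed to extend Grenier's argument from the electron case to VPME. The outcome is a time $T_\ast > 0$, independent of $\e$, and solutions $g_\e$ on $[0, T_\ast]$ satisfying a uniform bound of the form
\be
\sup_{\e \leq 1} \, \sup_{t \in [0, T_\ast]} \, \sup_{v \in \br^d} (1 + |v|^{k_0}) \, \| g_\e(t, \cdot, v) \|_{B_{\delta'}} \leq C .
\ee

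From this estimate, hypotheses \ref{hyp:regular-moments} and \ref{hyp:regular-convergence} are readily verified. Integrating in $v$ against the weight $(1+|v|^{k_0})$, with $k_0 > d$, the density $\rho[g_\e](t, \cdot)$ lies in $B_{\delta'}$, hence in $L^\infty(\bt^d)$, uniformly in $t \in [0, T_\ast]$ and $\e \in (0, \e_\ast]$, which yields the bound \eqref{hyp:density-uniform}. The same bound controls velocity moments of every order $j_0 < k_0 - d$, so the second part of \ref{hyp:regular-moments} is also in force. Grenier's scheme simultaneously constructs a limiting solution $g$ of \eqref{eq:KE-iso} with initial datum $g_0$ (the distributional limit provided by \ref{hyp:analytic-convergence}) and yields strong convergence of $g_\e$ to $g$ at the level of the characteristic flow; combined with the uniform moment control, this upgrades to uniform convergence on $[0, T_\ast]$ in $W_1$, establishing \ref{hyp:regular-convergence}.

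With \ref{hyp:regular-moments} and \ref{hyp:regular-convergence} in hand, the conclusion follows immediately from Theorem~\ref{thm:main} applied to the rough data $\{f_{0,\e}\}$. The main obstacle is really concentrated in the first step: adapting Grenier's analytic framework to accommodate the exponential Poisson--Boltzmann coupling, which requires careful estimates on $e^{U_\e} - 1$ in the $B_{\delta'}$ scale and a quantitative control on the shrinkage of the analyticity radius along the flow. However, this analytic quasineutral theory for VPME is essentially already contained in the authors' prior work \cite{IHK1,GPI-MFQN}, and the remaining work amounts to translating analytic bounds into the $L^\infty$ density and polynomial moment bounds required by Theorem~\ref{thm:main}, together with the standard upgrade from distributional to $W_1$ convergence under uniform moment control.
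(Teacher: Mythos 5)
Your overall strategy is the same as the paper's: verify hypotheses \ref{hyp:regular-moments} and \ref{hyp:regular-convergence} via a Grenier-style analytic quasineutral theory for VPME (cf. \cite{Grenier96, IHK1}), then apply Theorem~\ref{thm:main}. However, the intermediate uniform bound you assert,
\begin{equation}
\sup_{\e \leq 1}\,\sup_{t\in[0,T_\ast]}\,\sup_{v\in\br^d}(1+|v|^{k_0})\,\|g_\e(t,\cdot,v)\|_{B_{\delta'}}\leq C,
\end{equation}
is not what Grenier's construction delivers and is generally \emph{not} attainable for $t>0$ from the assumed data. Indeed $g_\e(t,x,v)=g_{0,\e}\bigl(X(0;t,x,v),V(0;t,x,v)\bigr)$ is a composition of $g_{0,\e}$ with the backward flow; differentiating in $x$ at fixed $v$ produces a term involving $\nabla_v g_{0,\e}$ (since $\partial_x V(0;t,x,v)\neq 0$ whenever the field is nontrivial). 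Hypothesis~\ref{hyp:analytic} controls only the $B_\delta$ norm of $g_{0,\e}$ in $x$ with polynomial decay in $v$, and says nothing about $v$-derivatives of $g_{0,\e}$. Hence there is no reason $g_\e(t,\cdot,v)$ should belong to $B_{\delta'}$, or even be differentiable in $x$, at positive time.

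This is precisely the difficulty that Grenier's multi-fluid framework is designed to circumvent, and it is the framework the paper invokes: one writes $g_\e(t,x,v)=\int_{\br^d}\rho^\theta_\e(t,x)\,\delta_0\bigl(v-u^\theta_\e(t,x)\bigr)\,\frac{\di\theta}{1+|\theta|^{k_0}}$, where the individual fluids $(\rho^\theta_\e,u^\theta_\e)$ solve a coupled pressureless Euler system and \emph{are} uniformly bounded in $C([0,T_\ast];B_{\delta'})$ in $\theta$ and $\e$. The $L^1_t L^\infty_x$ density bound of \ref{hyp:regular-moments} then comes from $\rho[g_\e](t,x)=\int\rho^\theta_\e(t,x)\,\frac{\di\theta}{1+|\theta|^{k_0}}$, and the velocity-moment bound is a statement on $g_{0,\e}$ at time zero only and follows directly from \ref{hyp:analytic}. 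Convergence \ref{hyp:regular-convergence} comes from uniform-in-$\theta$ $H^s$ convergence of the multi-fluids, which controls the transport cost and hence $W_1(g_\e,g)$. So your conclusion is correct and your high-level outline is sound, but the specific intermediate $B_{\delta'}$ bound on $g_\e$ should be replaced by the multi-fluid bounds; as stated it is a step that fails.
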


\subsubsection{Penrose Stable Case} \label{sec:PenroseStable}

We have recalled already that counterexamples to the quasineutral limit exist in arbitrarily high Sobolev regularity \cite{HKH}. 
This is due to unstable modes inherent to the underlying physics. 
These modes may be excluded by imposing a stability criterion in the style of Penrose \cite{Penrose} (see also the discussions in \cite{MouhotVillani, HKR}) and one then expects that quasineutrality should be valid. This was indeed proved rigorously by Han-Kwan and Rousset \cite{HKR}, who showed that the quasineutral limit is valid in sufficiently high Sobolev regularity for the ion model with linearized Poisson coupling, provided that a suitable Penrose-type stability criterion holds (see Equation~\ref{hyp:Penrose} below). 
As explained in the introduction of \cite{HKR}, their techniques can be extended to apply to the case with nonlinear coupling.
Our new result Theorem~\ref{thm:main} shows that this breakthrough high regularity result is stable with respect to small rough perturbations.

The statement of this corollary uses the velocity-weighted Sobolev spaces $\mc{H}^k_r$, where $k \in \mathbb{N}$ and $0 \leq r \in \br$, whose norm is defined by
\be
\| g \|_{\mc{H}^k_r}^2 : = \sum_{|\alpha| \leq k} \int_{\bt^d \times \br^d} (1 + |v|^2)^r |\partial^\alpha_{x,v} g|^2 \di x \di v ,
\ee
as well as the usual Sobolev space $H^k_x$ for functions of $x$ only.

\begin{cor}[Penrose-stable setting] \label{cor:Penrose}
Let $1 \leq d \leq 3$. 
Let $\{ g_{0,\e}\}_{\e \leq 1}$ satisfy:
\begin{Hlist2prime}
\item \label{hyp:Penrose-regularity} Uniform Sobolev-type bounds and $L^2$ convergence: for $m \in \mathbb{N}$ such that
$2m > 4 + d/2 + \lfloor d/2 \rfloor$,
$r_0 \in \br$ such that
$r_0 > 2 + d/2$, and sufficiently small $\kappa_0 > 0$,
\begin{align}
& \sup_{\e \leq 1}  \| g_{0,\e} \|_{\mc{H}^{2m}_{r_0}} \leq C_0 \\
& \sup_{\e \leq 1} \left \| \int_{\br^d} g_{0,\e}(\cdot, v) \di v - 1 \right \|_{H^{2m}_x} \leq \kappa_0 , \\
& \lim_{\e \to 0} \| g_{0, \e} - g_0 \|_{L^2} = 0 ;
\end{align}
\item  \label{hyp:Penrose} The uniform Penrose-type criterion holds:
\be
\inf_{x \in \bt^d, \e \in (0, 1], \gamma > 0, \tau \in \br, \xi \in \br^d \setminus \{ 0 \} } \left | 1 - \int_0^\infty e^{-(\gamma + i \tau)s} \frac{i \xi}{1 + |\xi|^2} \cF_v \nabla_v g_{0,\e} (x, s \xi) \di s \right | > 0 ,
\ee
where $\cF_v$ denotes the Fourier transform with respect to the velocity variable $v$ only.
\end{Hlist2prime}
Then there exists $T_\ast > 0$ and $C > 0$ such that, for all 
$\{ f_{0,\e} \}_{\e \leq 1}$ measures (case $d=1$) or non-negative $L^\infty$ functions (case $d=2,3$) satisfying \ref{hyp:moments} and \eqref{hyp:rate}, then
\be
\lim_{\e \to 0} \sup_{t \leq T_\ast} W_1(f_\e(t), g(t)) = 0 ,
\ee
where $g \in C \left ( [0, T_\ast] ; \mc{H}^{2m -1 }_{r_0} \right )$ is a solution of the KIsE system \eqref{eq:KE-iso} with initial datum $g_0$ (in fact the unique such solution in the class of $C \left ( [0, T_\ast] ; \mc{H}^{2m -1 }_{r_0} \right )$ solutions with $\rho_g \in L^2 \left ( [0, T_\ast] ; H^{2m}_x \right )$; see \cite{HKR}).

\end{cor}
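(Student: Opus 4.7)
The strategy is to reduce Corollary~\ref{cor:Penrose} to Theorem~\ref{thm:main}. To do so, I would construct, from the Penrose-stable data $\{g_{0,\e}\}$, a family of ``regular'' reference solutions $\{g_\e\}$ of the VPME system for which hypotheses \ref{hyp:regular-moments} and \ref{hyp:regular-convergence} of Theorem~\ref{thm:main} hold. The construction of these $g_\e$ and their convergence to a KIsE solution is exactly what Han-Kwan and Rousset \cite{HKR} provide, after extending their linearized analysis to the fully nonlinear Poisson--Boltzmann coupling (as flagged in the introduction of \cite{HKR}). The remainder of the argument is then a verification that this regular family fits into the framework of Theorem~\ref{thm:main}, and a final invocation of the theorem.

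First I would invoke the (extended) HKR result. Under \ref{hyp:Penrose-regularity} and \ref{hyp:Penrose}, with $\kappa_0$ sufficiently small, it should furnish a lifespan $T_\ast > 0$ (depending only on $C_0,\kappa_0,m,r_0,d$) and, for each $\e \in (0,1]$, a unique solution $g_\e \in C([0,T_\ast];\mc{H}^{2m-1}_{r_0})$ of $(VPME)_\e$, together with a KIsE solution $g$ of the same regularity, satisfying the uniform bound
\[
\sup_{\e \leq 1} \|g_\e\|_{L^\infty([0,T_\ast];\,\mc{H}^{2m-1}_{r_0})} \leq C
\]
and strong convergence $\sup_{t \leq T_\ast}\|g_\e(t) - g(t)\|_{L^2_{x,v}} \to 0$ as $\e \to 0$. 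This step I expect to be the main obstacle: the HKR energy estimates must be made robust in $\e$ for the nonlinear coupling $\e^2\Delta U = e^U - \rho$. The key tool will be to use the smallness assumption on $\int g_{0,\e}\,\di v - 1$ in $H^{2m}_x$ to treat $e^U - 1 - U$ as a higher-order perturbation in the elliptic estimates, reducing matters at leading order to the linearized operator $-\e^2\Delta + 1$ already handled in \cite{HKR}.

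Next I would verify \ref{hyp:regular-moments}. Since $2m-1 > d/2$ and $r_0 > d/2$, writing
\[
\partial^\alpha_x \rho[g_\e](x) = \int_{\br^d} (1+|v|^2)^{-r_0/2} \cdot (1+|v|^2)^{r_0/2}\, \partial^\alpha_x g_\e(x,v)\,\di v,
\]
applying Cauchy--Schwarz in $v$, and summing over $|\alpha| \leq 2m-1$, one obtains $\|\rho[g_\e]\|_{H^{2m-1}_x} \lesssim \|g_\e\|_{\mc{H}^{2m-1}_{r_0}}$; Sobolev embedding in $x$ then gives $\|\rho[g_\e]\|_{L^\infty_x} \lesssim \|g_\e\|_{\mc{H}^{2m-1}_{r_0}}$, uniformly in $t$ and $\e$, which is stronger than \ref{hyp:density-uniform}. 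The velocity moment bound of order $j_0 > 2$ on $g_{0,\e}$ follows analogously: since $r_0 > 2 + d/2$, there exists $j_0 > 2$ with $j_0 + d < 2r_0$, and Cauchy--Schwarz against $(1+|v|^2)^{-(r_0 - j_0/2)}$ produces the required bound.

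Finally, \ref{hyp:regular-convergence} would follow by combining the strong $L^2$ convergence of $g_\e$ to $g$ with the uniform $\mc{H}^{2m-1}_{r_0}$ bound, which supplies uniform moment control of all orders below $2r_0 - d$ on both $g_\e(t)$ and $g(t)$: a standard interpolation argument shows that $L^2$ convergence of densities together with uniformly bounded moments of order $> 1$ upgrades to $W_1$ convergence, uniformly in $t \in [0, T_\ast]$. Having verified \ref{hyp:regular-moments} and \ref{hyp:regular-convergence}, Theorem~\ref{thm:main} applies directly to the perturbations $\{f_{0,\e}\}$ satisfying \ref{hyp:moments} and \ref{hyp:rate}, yielding the desired conclusion.
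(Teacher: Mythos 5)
Your proposal is correct and follows essentially the same route as the paper: invoke the Han-Kwan--Rousset result (extended to the full nonlinear Poisson--Boltzmann coupling) to produce uniformly bounded solutions $g_\e$ on a common time interval converging to a KIsE solution $g$, then verify hypotheses \ref{hyp:regular-moments} and \ref{hyp:regular-convergence}, and conclude via Theorem~\ref{thm:main}. The only mechanical divergence is that you derive the uniform $L^\infty_x$ bound on $\rho[g_\e]$ directly from the $\mc{H}^{2m-1}_{r_0}$ control on $g_\e$ via Cauchy--Schwarz in $v$ and Sobolev embedding, whereas the paper instead imports from \cite{HKR} the additional uniform bound $\rho[g_\e] \in L^2([0,T_\ast];H^{2m}_x)$ and then embeds $H^{2m}_x\hookrightarrow L^\infty_x$; both are valid, and your version is slightly more self-contained and also makes explicit the $j_0$-moment check for $d=2,3$, which the paper leaves implicit.
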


\begin{remark}[Comparison with the electron case]
Theorem~\ref{thm:main} brings the theory for the ion quasineutral limit into line with the electron case, where the best available result also requires an exponential condition on the smallness of the perturbation \cite{IHK1, Iac22}.
\end{remark} 

\begin{remark}[Sharpness of the result]
\label{rmk:almost opt}
The single exponential condition \eqref{hyp:rate} is `almost optimal' since no polynomial rate $\e^N$ is admissible for \emph{any} $N > 0$.
This is due to the existence of exponentially growing modes for the (full) linearization of the system around a kinetically unstable profile, see \cite{HKH, IHK1, Iac22} for further discussion.

The best possible value of the dimension dependent exponent $\zeta(d)$ is for the moment unclear. In dimension $d=1$, the corresponding electron result achieves the exponent $1$, whereas for the ionic model our argument presently gives the exponent $2$. The instability mechanism discussed above rules out any polynomial rate in $\e$, but does not seem to determine the optimal exponential scale in the ionic case. In dimensions $d=2,3$ this question is closely linked to the problem of obtaining optimal growth estimates for the spatial density $\|\rho_{f_\e}\|_{L^\infty(\bt^d)}$ with respect to $t$ and $\e$, which is not yet fully understood in the periodic setting, for either electrons or ions.
\end{remark}

\begin{remark}[Improvements to the well-posedness theory]
As a consequence of the proof of Theorem~\ref{thm:main}, we are in fact able to improve the assumptions for the well-posedness result of \cite{GPI-WP} for the VPME system \eqref{eq:vpme}. 

More precisely, \cite[Theorem 2.1]{GPI-WP} states that the system \eqref{eq:vpme} has a unique global solution with spatial density bounded in $L^\infty(\bt^d)$, locally uniformly in time, for any $\e > 0$ and any initial datum satisfying
$(1 + |v|^{k_0}) f_{0,\e} \in L^\infty(\bt^d \times \br^d)$ and $( 1 + |v|^{m_0}) f_{0,\e} \in L^1(\bt^d \times \br^d)$ for $k_0 > d$ and $m_0 > d(d-1)$.

As a corollary of the techniques of Section~\ref{sec:density} we can relax the second assumption to require only $m_0 > d$.
Thus, in particular, for $f_{0,\e}$ satisfying  hypothesis \ref{hyp:moments} for $d=2,3$, the unique global bounded density solutions $f_\e$ referred to in the statement of Theorem~\ref{thm:main} exist.
Actually, as the reader can readily check, our proof applies also to the case of the electron Vlasov--Poisson system, so our result also gives the well-posedness of the classical system for initial data that satisfy \ref{hyp:moments} for $d=2,3$.
\end{remark}
\begin{paragraph}{The long time behaviour of plasmas and the quasineutral limit.}
The quasineutral limit can be thought of as a form of long-time limit: as explained in \cite{HanKwanHDR} (see also \cite{IHK1}), by suitable scalings one sees a connection between the quasineutral limit and the study of the long-time behaviour.
A particularly well-known phenomenon in this context is \emph{Landau damping}, see for example \cite{MouhotVillani, BedrossianMasmoudiMouhot, BMM2016, GrenierNguyenRodnianski, HKNguyenRousset2021, HKNguyenRousset-WS, BMM2022, GNR2022, HuangNguyenXu05, HuangNguyenXu06, Gagnebin, GagnebinIacobelli, ChatLukNguyen, nguyen2023landau, Wei2025, BCGIR, IRW}. 
\end{paragraph}

\bigskip

\begin{paragraph}{Structure of the Proof.}
The strategy of the proof differs according to the dimension $d=1,2,3$ under consideration. \\

\noindent \textbf{Dimension $d=1$:} In dimension one, our main result applies even when $f_\e$ is a \emph{measure} solution of \eqref{eq:vpme}.
The one-dimensional ionic Vlasov--Poisson system benefits from \emph{weak-strong} stability estimates in $W_1$, in which only one of the solutions need have bounded spatial density $\rho_{g_\e} \in L^\infty(\bt^1)$; the other solution may then be a measure \cite{IHK1}. However, in \cite{IHK1} the stability and regularity estimates for the electric field diverge exponentially quickly in $\e$ in the quasineutral limit. 

The key step for $d=1$ is to prove new estimates for the ionic electric field with a polynomial rather than exponential dependence on $\e$. The special feature of dimension one is that our estimates must apply where at least one of the spatial densities $\rho_{f_\e}$ is only a measure. This is carried out in Section~\ref{sec:electric_1}. Then, in Section~\ref{sec:stability_1}, we apply these estimates to obtain new quasineutral weak-strong $W_1$ stability estimates for the ionic Vlasov--Poisson system \eqref{eq:vpme}. \\

\noindent \textbf{Dimension $d\geq2$:} In dimension two and higher, Wasserstein stability estimates for (both electron and ion) Vlasov--Poisson systems are at present only known to hold when both solutions have bounded density $\rho_{f_\e} \in L^\infty$. We therefore work with perturbed solutions $f_\e$ in this class (in particular, unlike in dimension $d=1$, measures are excluded). 

We restrict the present higher-dimensional analysis to the cases $d=2,3$. This is consistent with the range in which the classical Vlasov--Poisson theory is best understood for sufficiently regular solutions; see for instance  \cite{AmbrosioColomboFigalli}. In particular, this is the range in which the well-posedness and quantitative estimates used in our argument are available in the form required here. From the physical point of view, the three-dimensional case is the most relevant one, while the two-dimensional case remains a natural intermediate setting in which many of the same difficulties already appear.
This framework traces back to the work of Loeper \cite{Loeper} on $W_2$ stability for electron Vlasov--Poisson; for ions, analogous results were shown in \cite{GPI-WP, GPI-WP-R3}.
Our goal in this work is to prove $W_2$ stability estimates for ionic Vlasov--Poisson with optimised \emph{quantitative} dependence on $\e$.
The choice of $W_2$ is for technical convenience, since under our moment assumptions \ref{hyp:moments} all Wasserstein distances on the initial data are equivalent.

Our optimisation has three main ingredients:
\begin{enumerate}[label=(\roman*)]
\item A new `Loeper-type' \emph{stability estimate for electric fields} produced by the Poisson--Boltzmann equation (Proposition~\ref{prop:regU}) in terms of the $W_2$ distance between the inducing ($L^\infty$) densities. The novelty is that we achieve the same dependence on $\e$ as is known for electron Vlasov--Poisson, given the same bound on $\| \rho_{f_\e} \|_{L^\infty}$. In previous results of this type \cite{GPI-MFQN}, the estimates diverged exponentially in $\e$. These estimates for the Poisson--Boltzmann equation are proved in Section~\ref{sec:electric_23}.
\item Estimation of $W_2$ by means of \emph{nonlinearly anisotropic functionals:} This approach, developed by the second author in \cite{Iac22} in the context of electron Vlasov--Poisson, improves the quantitative dependence of the resulting estimate by an exponential factor. By applying the estimates obtained in Section~\ref{sec:electric_23} we are able to implement the techniques of \cite{Iac22} to prove analogous estimates for bounded density solutions of the ionic Vlasov--Poisson system. This is done in Section~\ref{sec:stability_23}.
\item \emph{Density bounds:} Our $W_2$ estimate is quantified in terms of $L^\infty$ bounds on the spatial densities of \emph{both} solutions. To complete the quantitative analysis, we must therefore obtain bounds on $\| \rho_{f_\e} \|_{L^\infty}$ for the perturbed solutions $f_\e$. This is the subject of Section~\ref{sec:density}.

To do this, we perform an analysis of the growth rate of trajectories of the characteristic flow. This information simultaneously both can be used to control and is influenced by the density $\rho_{f_\e}$.
In previous work \cite{GPI-MFQN} control of $\| \rho_{f_\e} \|_{L^\infty}$ in terms of the characteristic trajectories was achieved by imposing an assumption of compact support on the initial data (Remark~\ref{rmk:Improvements}\ref{item:support}). In the present article, we demonstrate how to remove this assumption by instead formulating the feedback loop on the characteristic trajectories through the behaviour of \emph{velocity moments} (see e.g. \cite{Pallard, ChenChen} for electrons or \cite{GPI-WP} for ions, in both cases without quasineutral scaling). However, an approach based on controlling the growth of a compact support would equally be possible, although we will not write it here.

By either method, the argument is significantly more involved in dimension $d=3$ and requires a new approach to the analysis of the electric field. We discuss this in more detail below.
\end{enumerate}

\noindent \textbf{Density Bounds in Dimension $d=3$:} In dimension three, bounds for the electric field that treat the spatial density $\rho_{f_\e}$ simply as a function of $x$ are not sufficient to be able to close global-in-time growth estimates for the characteristic trajectories. Instead, successful trajectorial arguments (going back to \cite{Pfaffelmoser, Schaeffer, Batt-Rein} for electron Vlasov--Poisson) make use of the velocity dependence of $f$ and exploit the second-order structure of the characteristic flow.
These techniques, however, rely on the representation of the electron electric field as a convolution between $f$ and the Coulomb kernel, which is not available in the ionic case.

Previous works on the three-dimensional ionic Vlasov--Poisson system overcame this by analysing the electric field through a certain decomposition, first suggested for this system in the one-dimensional setting \cite{IHK1}: the full electrostatic potential $U$ is written as a sum $U = \bar U + \widehat U$, where
\be \label{def:Usplit-intro}
- \e^2 \Delta \bar U = \rho - 1, \; \int_{\bt^3} \bar U \di x = 0, \qquad - \e^2 \Delta \widehat U = 1 - e^U .
\ee
This representation combines the availability of the full suite of `electronic' techniques for analysis of $\bar U$ with higher regularity for the `remainder' $\widehat U$ (as proved in \cite{GPI-WP}), thereby enabling estimates for the characteristic trajectories to be proven.

However, while $\widehat U$ is `smoother' than $\bar U$ in general, it is not so clear that it is `small': quantitative estimates of the gain of regularity for $\widehat U$ diverge as $\e$ tends to zero. Previously, all available estimates for $\widehat U$ in suitable norms diverged exponentially fast in $\e$ \cite{GPI-MFQN}.

In this article, we show that a suitable gain of regularity for $\widehat U$ can in fact be achieved with only a \emph{polynomial} loss in $\e$.
We prove a new estimate for the nonlinearity $e^U$, showing a gain of integrability (Lemma~\ref{lem:eU-Lp-eps}).
Using this estimate, we are able to prove growth estimates for characteristics of the ionic Vlasov--Poisson system with a polynomial loss in $\e$ (Section~\ref{sec:moments_3}). It is this polynomial dependence that allows us to conclude that our main result holds with the \emph{single} exponential rate \eqref{hyp:rate} also in dimension $3$. In comparison to \cite{ChenChen, GPI-WP}, we refine the moment propagation argument here with new techniques designed for the ionic case, which allow us to take the exponent $\zeta(3)$ smaller than would otherwise be possible.
\end{paragraph}

\begin{paragraph}{Outline of the Paper.}
The paper is structured as follows: in Section~\ref{sec:prel} we collect a series of preliminary results. In Section~\ref{sec:ElectricField}, we establish novel regularity estimates for the electric field and its stability concerning the spatial density. A crucial improvement compared to prior findings is the derivation of constants that exhibit only polynomial degeneracy in $\e$.
Section~\ref{sec:Stability} combines the outcomes from Section~\ref{sec:ElectricField} with the employment of kinetic-Wasserstein distances, recently introduced by the second author. This combination leads to precise stability estimates for solutions of VPME with bounded density. To apply this result effectively in our context, Section~\ref{sec:density} presents new $L^\infty$ bounds on the spatial density $\rho_f$ for a solution $f$ of the VPME system \eqref{eq:vpme}. Finally, in Section~\ref{sec:proof}, we provide the proof of our main theorem, Theorem~\ref{thm:main}.

\end{paragraph}

\section{Preliminaries}\label{sec:prel}

\subsection{Representation of the torus $\bt^d$} \label{sec:torus}

Throughout this work, $\bt^d$ denotes the unit flat torus in $d$ dimensions.
For the purposes of defining integrals over the torus, we identify points in $\bt^d$ with points in the unit box $[ - 1/2, 1/2 )^d$.
This is equipped with the distance $| \cdot |_{\bt^d}$ defined by
\be \label{def:TorusDistance}
| x |_{\bt^d} : = \inf_{\alpha \in \Z^d} |x + \alpha| .
\ee
In some arguments, it will necessary to keep track of the number of times a path $z(t) : I \to \bt^d$ wraps around the torus.
In this context, we will consider a lifted version of $z(t)$ thought of as a path on $\br^d \times \br^d$. 
In such cases, in order to evaluate quantities of the form $f(z(t))$, we identify functions on $\br_+ \times [ - 1/2, 1/2 )^d \times \br^d$ with their spatially periodic extensions on $\br_+ \times \br^d \times \br^d$ in the natural way:
\be
f(t,x,v) = f(t,x+\alpha,v), \qquad \text{where} \; \alpha \in \bb{Z}^d, \; x + \alpha \in \left [ - \frac{1}{2}, \frac{1}{2} \right )^d.
\ee

\subsection{Monge--Kantorovich--Wasserstein Distances}

We recall the definition of the Wasserstein distances $W_p$ for measures on the phase space.

\begin{defi} \label{def:Wp}
Given two probability measures $\mu,\nu$ on $\bt^d \times \br^d$,
for any $p \in [1, +\infty)$, the Wasserstein distance of order $p$, denoted $W_p$, is defined by
\be \label{eqdef:Wass}
W_p^p(\mu, \nu) = \inf_{\pi \in \Pi(\mu,\nu)} \int_{(\bt^d \times \br^d)^2} \bigl(|x-y|^p_{\bt^d}+|v-w|^p\bigr) \di \pi(x,v,y,w),
\ee
where $\pi \in \mc{P}((\bt^d \times \br^d)^2)$ belongs to the set of \emph{couplings} $ \Pi(\mu,\nu)$: namely, for any Borel subset $\mc{A} \subset \bt^d \times \br^d$,
\be
\pi(\mc{A} \times (\bt^d \times \br^d)) = \mu(\mc{A}) \qquad \pi((\bt^d \times \br^d) \times \mc{A}) = \nu(\mc{A}).
\ee
We note that  $W_p(\mu, \nu)<\infty$ for $\mu, \nu \in \mc{P}_p$, where $\mc{P}_p$ denotes the set of probability measures $\gamma$ for which 
\be
\int_{\bt^d \times \br^d} |v|^p \di \gamma(x,v) < + \infty.
\ee
\end{defi}

We will occasionally use the same symbol for a probability measure and, when it is absolutely continuous with respect to Lebesgue measure, for its density. Accordingly, expressions of the form $\int \varphi \, dg$ always denote integration with respect to the measure $g$.
Our proof of Theorem~\ref{thm:main} relies on a new stability estimate for solutions of the VPME system \eqref{eq:vpme} in $W_2$ (Proposition~\ref{prop:stability}). To prove this estimate, we make use of a new technique proposed in \cite{Iac22}, in which we consider
a quantity related to the Wasserstein distance with a nonlinearly defined kinetic structure (see Section~\ref{sec:Stability}).

In order to obtain our final result in $W_1$, we will need a couple of simple estimates between different powers of the Wasserstein distance. We consider only the cases $p=1,2$, since this is what is relevant for us. 
\begin{lemma}
\label{lemma:Wp}
Let $\mu,\nu$ be two probability densities on $\bt^d \times \br^d$ such that
$$
\int_{\bt^d \times \br^d} |v|^k \di \mu(x,v) \leq C_k,\qquad \int_{\bt^d \times \br^d} |v|^k \di \nu(x,v)\leq C_k
$$
for some $C_k<\infty$ and $k>2$.
Then
$$
W_1(\mu,\nu)\leq \sqrt{2}W_2(\mu,\nu),\qquad W_2(\mu,\nu) \leq 3(1+2C_k)^{\frac{1}{k-1}}W_1(\mu,\nu)^{\frac{k-2}{k-1}}.
$$ 
\end{lemma}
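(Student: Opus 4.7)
The first bound, $W_1 \le \sqrt{2} W_2$, is essentially Cauchy--Schwarz applied to the coupling that realises $W_2$. Concretely, if $\pi$ is optimal for $W_2$, then using $|x-y|+|v-w|\le\sqrt{2}\,\sqrt{|x-y|^2+|v-w|^2}$ pointwise together with Jensen's inequality (in the form $\int\sqrt{h}\,\di\pi \le \sqrt{\int h \,\di\pi}$) yields
\[
W_1(\mu,\nu) \le \int \bigl(|x-y|+|v-w|\bigr)\,\di\pi \le \sqrt{2}\Bigl(\int |x-y|^2+|v-w|^2\,\di\pi\Bigr)^{1/2} = \sqrt{2}\,W_2(\mu,\nu).
\]

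For the reverse interpolation, the plan is to pick an optimal coupling $\pi$ for $W_1$ and use it as a (non-optimal) competitor for $W_2$. This gives $W_2^2(\mu,\nu) \le \int |x-y|^2 \,\di\pi + \int |v-w|^2 \,\di\pi$. The spatial part is easy: since the torus distance is bounded by $\sqrt{d}/2$, we get $\int |x-y|^2 \,\di\pi \le \tfrac{\sqrt{d}}{2} W_1$. The velocity part is where the moment assumption enters; the idea is the classical \emph{truncation/interpolation} trick. For any threshold $R>0$ write
\[
|v-w|^2 \le R\,|v-w| + R^{-(k-2)}|v-w|^k \mathbf{1}_{\{|v-w|>R\}},
\]
and use $\int |v-w|^k\,\di\pi \le 2^{k-1}\bigl(\int|v|^k\,\di\mu + \int|w|^k\,\di\nu\bigr)\le 2^k C_k$ to obtain
\[
\int |v-w|^2\,\di\pi \le R\,W_1 + 2^k C_k\,R^{-(k-2)}.
\]
Optimising in $R$ (equalising the two terms gives $R \sim (C_k/W_1)^{1/(k-1)}$) yields $\int |v-w|^2\,\di\pi \lesssim (1+C_k)^{1/(k-1)} W_1^{(k-2)/(k-1)}$, and combining with the spatial part produces the claimed interpolation inequality (in the small-$W_1$ regime the $W_1^{(k-2)/(k-1)}$ term dominates the linear $W_1$ term, while for $W_1 \gtrsim 1$ one invokes a trivial a priori bound $W_2 \lesssim (1+C_k^{1/k})$ coming from the triangle inequality $W_2(\mu,\nu)\le W_2(\mu,\delta_0) + W_2(\nu,\delta_0)$ and the second velocity moment estimate $\int|v|^2\,\di\mu\le C_k^{2/k}$ via H\"older).

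The only real step is the choice of the splitting radius $R$ and tracking the final numerical constant $3(1+2C_k)^{1/(k-1)}$; everything else is mechanical. I do not anticipate a genuine obstacle: the argument is a one-line truncation plus an optimisation, the kind of estimate that is standard in the Wasserstein interpolation literature and would not merit discussion beyond stating the exponents and constants that will be used downstream in the quasineutral limit argument.
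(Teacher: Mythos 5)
Your first inequality is exactly the paper's argument (Hölder/Jensen on a coupling), so no comment there. For the second inequality you take a genuinely different route. The paper never splits the spatial and velocity parts and never introduces a truncation parameter: it sets $g:=|x-y|_{\bt^d}+|v-w|$, applies H\"older's inequality in the form
\begin{equation}
\int g^2\,\di\pi \;=\; \int g^{\frac{k-2}{k-1}}\,g^{\frac{k}{k-1}}\,\di\pi \;\leq\; \Bigl(\int g\,\di\pi\Bigr)^{\frac{k-2}{k-1}}\Bigl(\int g^k\,\di\pi\Bigr)^{\frac{1}{k-1}},
\end{equation}
and then controls $\int g^k\,\di\pi$ in one stroke via $g\leq 1+|v|+|w|$ (bounded torus) and $(1+|v|+|w|)^k\leq 3^{k-1}(1+|v|^k+|w|^k)$, so the moment hypothesis immediately gives $\int g^k\,\di\pi\leq 3^{k-1}(1+2C_k)$. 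This directly produces the stated constant $3(1+2C_k)^{1/(k-1)}$ and the exponent $\frac{k-2}{k-1}$ in a single step, with no case analysis. Your truncation argument is also correct and is the classical way to prove such interpolation bounds, but it costs you two things: (i) the spatial and velocity parts are treated separately, leaving a linear-in-$W_1$ term from the spatial part that forces you to split into small/large $W_1$ regimes and invoke a trivial a priori bound in the latter case; (ii) you cannot recover the specific constant $3(1+2C_k)^{1/(k-1)}$ — your optimisation gives something like $2(2^kC_k)^{1/(k-1)}$ on the velocity part alone, plus the spatial contribution. Since the lemma is used downstream with that particular constant written in (see the estimate swapping $W_2$ for $W_1$ at the start of the $d=2,3$ proof), matching it matters. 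In short: your plan would yield a bound of the right form with worse constants and an extra case distinction; the paper's H\"older-interpolation on the combined distance is shorter and gives the stated constant exactly. One further remark: both your plan and the paper's proof actually bound $W_2^2$, not $W_2$ — the displayed conclusion of the lemma appears to have dropped the square, and any write-up should be consistent on this point.
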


\subsection{Density Estimates Using Moments}

We recall the following well-known `interpolation' estimate (see for example \cite{Lions-Perthame}), which states that bounds on the velocity moments of $f$ imply $L^p$ bounds on the spatial density $\rho$.

\begin{lemma} \label{lem:moments}
Let $d \geq 1$.
Let $f \in L^\infty(\bt^d \times \br^d)$ satisfy, for some $k > 0$,
\be
M_k : = \int_{\bt^d \times \br^d} |v|^k | f(x,v) | \di x \di v < + \infty .
\ee
Then the spatial density
\be
\rho(x) : = \int_{\br^d} f(x,v) \di v
\ee
belongs to $L^{1 + k/d}(\bt^d)$ with the estimate
\be \label{est:rhoLp-moment}
\| \rho \|_{L^{1 + k/d} (\bt^d)} \leq C_{ k,d} \| f \|^{\frac{k}{d+k}}_{L^\infty (\bt^d \times \br^d) } M_k^{\frac{d}{d+k}} .
\ee
\end{lemma}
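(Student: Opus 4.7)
The plan is the classical velocity-truncation argument: bound $\rho(x)$ pointwise by splitting the velocity integral at a radius $R$, then optimize $R$ in a way that depends on $x$, and finally integrate.

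First I would fix $x \in \bt^d$ and $R > 0$ and write
\be
\rho(x) = \int_{|v| \leq R} f(x,v) \di v + \int_{|v| > R} f(x,v) \di v \leq \omega_d \| f \|_{L^\infty} R^d + R^{-k} \int_{\br^d} |v|^k f(x,v) \di v,
\ee
where $\omega_d$ denotes the volume of the unit ball. Denote $g(x) := \int_{\br^d} |v|^k f(x,v) \di v$, so that the right-hand side is of the form $A R^d + B R^{-k}$ with $A = \omega_d \|f\|_{L^\infty}$ and $B = g(x)$. Minimising in $R$ (take $R^{d+k} \sim g(x)/\| f \|_{L^\infty}$) yields a pointwise bound of the form
\be
\rho(x) \leq C_{d,k} \, \| f \|_{L^\infty}^{k/(d+k)} \, g(x)^{d/(d+k)} .
\ee

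Next I would raise this bound to the power $1 + k/d$. The crucial arithmetic is that $\frac{d}{d+k}\bigl(1 + \frac{k}{d}\bigr) = 1$ and $\frac{k}{d+k}\bigl(1 + \frac{k}{d}\bigr) = \frac{k}{d}$, so the exponent on $g(x)$ becomes exactly $1$, while the prefactor contributes $\| f \|_{L^\infty}^{k/d}$. Integrating in $x$ then gives
\be
\int_{\bt^d} \rho(x)^{1+k/d} \di x \leq C_{d,k}^{1+k/d} \, \| f \|_{L^\infty}^{k/d} \int_{\bt^d} g(x) \di x = C_{d,k}^{1+k/d} \, \| f \|_{L^\infty}^{k/d} \, M_k .
\ee
Finally, taking the $(1+k/d)$-th root, and using $\frac{k/d}{1+k/d} = \frac{k}{d+k}$ and $\frac{1}{1+k/d} = \frac{d}{d+k}$, one recovers exactly \eqref{est:rhoLp-moment}.

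There is no substantial obstacle here; the only care needed is to choose the truncation radius $R$ as a function of $x$ so as to balance the two contributions and produce the sharp exponents $\frac{k}{d+k}$ and $\frac{d}{d+k}$. The estimate is tight and the proof is essentially a one-parameter interpolation, following Lions--Perthame.
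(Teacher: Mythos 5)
Your proof is correct and is exactly the classical truncation-and-optimisation argument that the paper implicitly invokes by citing Lions--Perthame (the paper itself states the lemma without reproducing a proof). The exponent arithmetic checks out and nothing is missing.
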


\subsection{Energy Functional}

The energy of the VPME system \eqref{eq:vpme} is given by the functional
\be \label{def:Energy}
\mc{E}_\e[f_\e] := \frac{1}{2}\int_{\bt^d \times \br^d} |v|^2 f_\e \di x \di v + \frac{\e^2}{2} \int_{\bt^d} |\nabla U_\e|^2 \di x +  \int U_\e e^{U_\e} \di x .
\ee
This quantity is conserved by all sufficiently regular solutions of \eqref{eq:vpme}, and in particular by the strong solutions constructed in \cite{GPI-WP} that we will use in the current work.

Under hypothesis~\ref{hyp:moments}, the energy of the initial data $f_{0,\e}$ is bounded uniformly in $\e$ -- we sketch the argument below in Lemma~\ref{lem:InitialEnergy}. Therefore, the energy of solutions to the VPME system \eqref{eq:vpme} starting from these data is bounded both uniformly in $\e$ and uniformly for all time. Moreover, under hypothesis~\ref{hyp:analytic}, the functions $g_{0,\e}$ also satisfying \ref{hyp:moments}, and thus the energy of solutions with initial data $g_{0,\e}$ is bounded uniformly in both $\e$ and time.

\begin{lemma} \label{lem:InitialEnergy}
Let $f_{0,\e}$ satisfy \ref{hyp:moments}. 
Then there exists a constant $C_1 > 0$ depending on $C_0$ only such that
\be
\mc{E}_\e [f_{0,\e}] \leq C_1.
\ee
\end{lemma}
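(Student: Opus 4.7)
The strategy is to split $\mc{E}_\e[f_{0,\e}]$ into its kinetic and electrostatic pieces and estimate each separately, reducing everything to the moment control in \ref{hyp:moments}. First I would handle the kinetic energy $\frac{1}{2} \int_{\bt^d \times \br^d} |v|^2 f_{0,\e}\,\di x\,\di v$: in dimensions $d=2,3$ the condition $\| (1+|v|)^{k_0} f_{0,\e}\|_{L^1} \leq C_0$ with $k_0 > d \geq 2$ directly dominates $\int |v|^2 f_{0,\e}\,\di x\,\di v$, so this piece is at most $C(C_0)$ at once (in $d=1$ the analogous step requires a second-moment input, which is implicit in the setting used to invoke this lemma later).

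For the two electrostatic terms, the key trick I would use is to multiply the Poisson--Boltzmann equation $\e^2 \Delta U_\e = e^{U_\e} - \rho_{0,\e}$ by $U_\e$ and integrate by parts on $\bt^d$. This produces the identity
\be
\frac{\e^2}{2} \int_{\bt^d} |\nabla U_\e|^2 \di x + \int_{\bt^d} U_\e e^{U_\e}\,\di x \; = \; \int_{\bt^d} U_\e \rho_{0,\e}\,\di x - \frac{\e^2}{2} \int_{\bt^d} |\nabla U_\e|^2\,\di x \; \leq \; \int_{\bt^d} U_\e \rho_{0,\e}\,\di x .
\ee
This is the crucial observation: the Dirichlet term and the electron entropy term partially cancel, leaving only a bulk quantity involving $U_\e$ against $\rho_{0,\e}$ to control.

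To bound this remaining piece I would invoke the maximum principle on the Poisson--Boltzmann equation: at a maximum $x^\ast$ of $U_\e$, $\Delta U_\e(x^\ast) \leq 0$, so $e^{U_\e(x^\ast)} \leq \rho_{0,\e}(x^\ast)$, yielding the pointwise upper bound $U_\e \leq \log^+ \| \rho_{0,\e} \|_{L^\infty(\bt^d)}$. Combined with $\rho_{0,\e} \geq 0$ and $\int \rho_{0,\e}\,\di x = 1$ this gives $\int U_\e \rho_{0,\e}\,\di x \leq \log^+ \|\rho_{0,\e}\|_{L^\infty}$. The uniform $L^\infty$ bound on $\rho_{0,\e}$ for $d=2,3$ follows from the $L^\infty$ half of \ref{hyp:moments} through
\be
\rho_{0,\e}(x) = \int_{\br^d} f_{0,\e}(x,v)\,\di v \leq \bigl\| (1+|v|)^{k_0} f_{0,\e} \bigr\|_{L^\infty(\bt^d \times \br^d)} \int_{\br^d} (1+|v|)^{-k_0}\,\di v ,
\ee
which is finite precisely because $k_0 > d$. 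Putting the pieces together delivers the desired $\mc{E}_\e[f_{0,\e}] \leq C_1(C_0, d, k_0)$, uniformly in $\e$.

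I do not anticipate any deep obstacle. The only points needing mild care are that the integration by parts and the maximum principle are applied to a genuine solution, which is justified by standard elliptic regularity on the torus: with $\rho_{0,\e} \in L^\infty$ the Poisson--Boltzmann equation has a unique solution $U_\e \in W^{2,p}(\bt^d)$ for every $p < \infty$, hence continuous, so both steps are rigorous. The entire argument is essentially bookkeeping around the identity obtained by multiplying the elliptic equation by $U_\e$; it is this identity that makes the cancellation of the positive Dirichlet term with a portion of the electron entropy term transparent.
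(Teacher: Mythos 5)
Your argument is correct and reaches the same conclusion, but it handles the electrostatic part of the energy by a genuinely different mechanism than the paper. Both treatments begin identically: the kinetic term is absorbed into the moment bound of \ref{hyp:moments}, and both test the Poisson--Boltzmann equation against $U_\e$ to reduce the electrostatic part to controlling $\int U_\e \rho_{0,\e}$. Where you diverge is in how that integral is estimated. The paper keeps the argument in $L^p$ scale: it applies Lemma~\ref{lem:moments} to put $\rho_{0,\e}$ in $L^{(d+2)/d}$ using only the second velocity moment, then uses H\"older with dual exponent $(d+2)/2$, converts $(U_\e)_+$ into $e^{U_\e}$ via the elementary inequality $y^{d/2}\leq c_d e^y$, and closes with the $L^{(d+2)/d}$ case of Lemma~\ref{lem:eU-Lp}. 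You instead observe that the $L^\infty$ half of \ref{hyp:moments} with $k_0>d$ directly gives $\|\rho_{0,\e}\|_{L^\infty}\leq C(C_0)$, apply a maximum principle to get the pointwise bound $U_\e\leq\log\|\rho_{0,\e}\|_{L^\infty}$, and then use $\rho_{0,\e}\geq 0$ with $\int\rho_{0,\e}=1$. Your maximum-principle step is, in effect, the $p=\infty$ case of the paper's Lemma~\ref{lem:eU-Lp}, which the paper itself proves; you could simply cite it rather than rederive it. Your route is shorter and more transparent, at the mild cost of consuming the full strength of the $L^\infty$ hypothesis, whereas the paper's $L^{(d+2)/d}$ detour would survive in settings where one only had $f_{0,\e}\in L^\infty$ plus second velocity moments, without the decaying $L^\infty$ weight. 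One cosmetic remark: since $|\bt^d|=1$ and $\int\rho_{0,\e}=1$, one automatically has $\|\rho_{0,\e}\|_{L^\infty}\geq 1$, so your $\log^+$ is just $\log$. You are also right to flag that the $d=1$ branch of \ref{hyp:moments} only controls the first moment; the lemma, as used, is effectively a $d=2,3$ statement, and the paper's own proof makes the same implicit restriction when it invokes the $L^\infty$ and second-moment bounds.
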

\begin{proof}
Hypothesis~\ref{hyp:moments} implies that
\be
\| (1 + |v|^2) f_{0,\e} \|_{L^1} + \| f_{0,\e} \|_{L^\infty} \leq C_0,
\ee
so that the kinetic energy term is uniformly bounded. 

Moreover, by Lemma~\ref{lem:moments}, for some constant $C_0 ' > 0$ depending only on $C_0$,
\be
\| \rho_{0,\e} \|_{L^{(d+2)/d}} \leq C_0 ' .
\ee
Since $\e^2 \Delta_x U_\e = e^{U_\e} - \rho_{0,\e}$, the remaining terms satisfy
\begin{align}
 \frac{\e^2}{2} \int_{\bt^d } |\nabla U_\e|^2 \di x +  \int_{\bt^d } U_\e e^{U_\e} \di x & = \int_{\bt^d } U_\e \rho_{0,\e} \di x
  \leq  \int_{\bt^d } (U_\e)_+ \rho_{0,\e} \di x .
\end{align}
By H\"older's inequality,
\begin{align}
\int_{\bt^d } (U_\e)_+ \rho_{0,\e} \di x & \leq \| \rho_{0,\e} \|_{L^{(d+2)/d}} \| (U_\e)_+ \|_{L^{(d+2)/2}} \\
& \leq \| \rho_{0,\e} \|_{L^{(d+2)/d}} \| (U_\e)_+^{d/2} \|_{L^{(d+2)/d}}^{2/d}.
\end{align}
Finally, note that there exists a constant $c_d > 0$ such that $y^{d/2} \leq c_d e^y$ for all $y \geq 0$. 
Hence, by Lemma~\ref{lem:eU-Lp} below,
\begin{align}
\int_{\bt^d } (U_\e)_+ \rho_{0,\e} \di x & \leq c_d^{2/d} \| \rho_{0,\e} \|_{L^{(d+2)/d}} \| e^{U_\e} \|_{L^{(d+2)/d}}^{2/d} \\
& \leq c_d^{2/d} \| \rho_{0,\e} \|_{L^{(d+2)/d}}^{(d+2)/d} .
\end{align}
Thus
$\mc{E}_\e [f_{0,\e}] \leq C_1$,
where $C_1$ depends on $C_0$ only.
\end{proof}

We recall the following consequence.
Since $\mc{E}[f_\e(t)]$ is uniformly bounded for all $t$ and $\e$, $f_\e(t)$ has uniformly bounded second velocity moment $M_2(t)$. Since the transport equation also conserves the $L^\infty$ norm of $f_\e$, the following uniform $L^p$-type bound on $\rho[f_\e]$ can be deduced.

\begin{lemma}
Let $f_\e$ be a solution of the VPME system~\eqref{eq:vpme} with initial datum $f_{0,\e}$ that satisfies \ref{hyp:moments} ($f_\e$ is then the global unique solution with bounded density).
Then there exists a constant $C_1 > 0$ depending on $C_0$ only such that
\be \label{est:53-uniform}
\| \rho[f_\e(t)] \|_{L^{d+2/d}} \leq C_1  \qquad \text{for all} \; t \geq 0 .
\ee
\end{lemma}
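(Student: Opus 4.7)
The plan is to combine conservation of the energy functional $\mc{E}_\e$ with the moment-based interpolation of Lemma~\ref{lem:moments}. Hypothesis~\ref{hyp:moments} together with Lemma~\ref{lem:InitialEnergy} already give the needed bound at the initial time; the task is simply to propagate it to all later times. Since the Vlasov equation propagates $L^\infty$ norms along its bounded-force characteristics, and since the bounded-density solutions of \cite{GPI-WP} satisfy $\mc{E}_\e[f_\e(t)] = \mc{E}_\e[f_{0,\e}]$, the only real content is extracting a time-uniform bound on the second velocity moment $M_2(t):=\int_{\bt^d\times\br^d}|v|^2 f_\e(t)\,\di x\di v$ out of the conserved energy.

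To that end, I would first invoke Lemma~\ref{lem:InitialEnergy} to obtain $\mc{E}_\e[f_{0,\e}] \leq C_1$ depending only on $C_0$, and then pass this bound to time $t$ by energy conservation. Next I would bound the electrostatic contribution to $\mc{E}_\e$ from below: the Dirichlet term $\tfrac{\e^2}{2}\int_{\bt^d}|\nabla U_\e|^2\,\di x$ is non-negative, and the pointwise inequality $y e^{y}\geq -e^{-1}$ for $y\in\br$ gives $\int_{\bt^d} U_\e e^{U_\e}\,\di x\geq -e^{-1}$. Combining these yields
\begin{equation}
\tfrac{1}{2} M_2(t) \;\leq\; \mc{E}_\e[f_\e(t)] + e^{-1} \;\leq\; C_1 + e^{-1}.
\end{equation}

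Finally, conservation of $\|f_\e(t)\|_{L^\infty}$ along the flow yields $\|f_\e(t)\|_{L^\infty}\leq \|f_{0,\e}\|_{L^\infty}\leq C_0$ for every $t\geq 0$, thanks to hypothesis~\ref{hyp:moments}. Applying Lemma~\ref{lem:moments} with $k=2$ then gives
\begin{equation}
\| \rho[f_\e(t)] \|_{L^{(d+2)/d}} \;\leq\; C_{2,d} \, \|f_\e(t)\|_{L^\infty}^{\frac{2}{d+2}} \, M_2(t)^{\frac{d}{d+2}} \;\leq\; C_1',
\end{equation}
for a constant $C_1'$ depending only on $C_0$ and $d$, which is the desired \eqref{est:53-uniform}. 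I do not foresee any genuine obstacle in this argument: the only delicate point is that energy conservation must be applicable in the chosen solution class, and this is already provided by the well-posedness theory of \cite{GPI-WP} under hypothesis~\ref{hyp:moments}.
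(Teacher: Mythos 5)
Your proof is correct and follows exactly the route indicated in the paper (energy conservation, lower-bounding the electrostatic terms to extract a uniform bound on $M_2(t)$, conservation of the $L^\infty$ norm, and then Lemma~\ref{lem:moments} with $k=2$). The pointwise bound $ye^y \geq -e^{-1}$ you supply for $\int U_\e e^{U_\e}\,\di x$ is the right way to fill in the detail the paper leaves implicit.
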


\section{Estimates for the Electric Field} \label{sec:ElectricField}

This section focuses on obtaining estimates for solutions of the \emph{Poisson--Boltzmann} equation
\be
\e^2 \Delta U = \rho - e^U ,
\ee
which will be used to prove Wasserstein stability estimates for the ionic Vlasov--Poisson system \eqref{eq:vpme}. We are interested in the regularity of the electric field $- \nabla U$ and its stability with respect to variations in the inducing density $\rho$. In both cases, our central aim is to mitigate as much as possible the speed of divergence of the constants in our estimates as $\e$ tends to zero.

The expected regularity of $\rho$ for a solution of \eqref{eq:vpme} and the overall strategy of proof are different for different dimensions $d=1,2,3$. We divide this section accordingly.

\subsection{Case $d=1$} \label{sec:electric_1}

In dimension $d=1$, we work in a \emph{weak-strong} setting, in which we expect one solution to have bounded density $\rho_1 \in L^\infty(\bt^1)$, while the other may be merely a probability measure $\rho_2 \in \mc{P}(\bt^1)$. The weak-strong stability estimates for one-dimensional electron Vlasov--Poisson in \cite{Hauray} make use of an explicit convolution representation of the electric field, which in the ion case is not available due to the nonlinearity. We therefore analyse the Poisson--Boltzmann equation using a decomposition method that was already introduced in \cite{IHK1}: we introduce potentials $\bar U$ and $\widehat U$ such that
\be
- \e^2 \bar U'' = \rho - 1, \; \; \int_{\bt} \bar U \di x = 0, \qquad \qquad \e^{2} \widehat U '' = 1 - e^{\bar U + \widehat U}.
\ee
Then $U : = \bar U + \widehat U$ satisfies the Poisson--Boltzmann equation
\be
- \e^2 U'' = \rho - e^U .
\ee

With this decomposition, the `singular' part $\bar U$ then satisfies exactly the \emph{linear} Poisson equation that would appear in the electron Vlasov--Poisson system and is therefore amenable to the corresponding techniques. Meanwhile $\widehat U$ is more regular \cite[Lemma 2.2]{IHK1} -- however, this improved regularity comes at a cost in $\e$. In this section, we seek stability estimates for the `regular' part of the electric field $- \widehat U'$ with respect to the inducing density $\rho$. In previous work \cite{IHK1} such estimates were obtained with an exponential loss in $\e$; here we are able to replace this by an algebraic loss.

\begin{prop} \label{prop:regU-1d}
Let $d=1$.
\begin{enumerate}[label=(\roman*)]
\item Let $\rho \in \mc{P}(\bt^1)$ be a probability measure.
There exist a unique $\bar U, \widehat U \in W^{1,2}$ such that
\be
- \e^2 \bar U'' = \rho - 1, \; \int_{\bt} \bar U \di x = 0, \qquad \e^{2} \widehat U '' = 1 - e^{\bar U + \widehat U}.
\ee
Then $U = \bar U + \widehat U$ is a continuous function on $\bt^1$ satisfying
\be
\| e^U \|_{L^1(\bt^1)} = 1, \qquad \| U \|_{L^\infty} \leq \frac{1}{3} \e^{-2}, \qquad \| U \|_{\Lip} \leq \e^{-2}, 
\ee
\item $\widehat U '$ is Lipschitz, with estimate
\be
\|  \widehat U ' \|_{\text{Lip}} \leq \e^{-4} .
\ee

\item Let $\rho_i \in \mc{P}(\bt^1)$, ($i=1,2$) be probability measures. Then
\be
\| \widehat U_1 ' -  \widehat U_2 ' \|_{L^2} \leq \frac{1}{4} \e^{-3} W_1(\rho_1, \rho_2) , 
\ee
\end{enumerate}
\end{prop}

\begin{proof} \emph{Parts (i) and (ii)}:
The existence and uniqueness of $\bar U$ and $\widehat U$ is shown in \cite{IHK1}, where it is also shown that $\bar U$ is continuous and $\widehat U$ is twice continuously differentiable on $\bt^1$. We will now establish the quantitative estimates.

$U = \bar U + \widehat U$ satisfies
\be
- \e^2 U'' = \rho - e^U
\ee
in the sense of distributions on $\bt^1$. Thus, testing the equation with the constant function 1 gives
\be
\| e^U \|_{L^1(\bt^1)} = \int_{\bt^1} e^U \di x = \int_{\bt^1} 1 \di \rho = 1,
\ee
since $e^U > 0$ and $\rho$ is assumed to be a probability measure. 

Moreover, $U$ has the representation
\be
U = \e^{-2} G_1 \ast (\rho - e^U) + \int_{\TT^1} U \di x ,
\ee
where
\be \label{def:G1}
G_1 := \frac{1}{2} \left ( x^2 - |x| + \frac{1}{6} \right ) \qquad x \in (- \frac{1}{2}, \frac{1}{2} ] 
\ee
denotes the Green's function of the Laplacian on the one-dimensional torus $\bt^1$ (here represented using its fundamental domain $(- \frac{1}{2}, \frac{1}{2} ] $). That is,
\be
- G_1 '' = \delta_0 - 1, \qquad \int_{\TT^1} G_1 \di x = 0.
\ee
We note in particular that $G_1$ is Lipschitz and bounded, with
\be
\| G_1 \|_{L^\infty(\TT^1)} = \frac{1}{12}, \qquad \| G_1 \|_{\text{Lip}} \leq \frac{1}{2} .
\ee
Therefore,
\begin{align}
\| U - \int_{\TT^1} U \di x \|_{L^\infty} &\leq \e^{-2} \int_{\bt} G_1 \di (\rho - e^U) \\
& \leq \e^{-2} \| G_1\|_{L^\infty} \| \rho - e^U \|_{\text{TV}} .
\end{align}
Since $\rho$ is a probability measure and $e^U$ a probability density function, we have
\be
\| \rho - e^U \|_{\text{TV}}  \leq \int_{\bt^1} 1 \di \rho + \int_{\bt^1} e^U \di x \leq 2 . 
\ee
Hence
\be
\| U - \int_{\TT^1} U \di x \|_{L^\infty}  \leq \frac{1}{6} \e^{-2} .
\ee
It remains to estimate $\int_{\bt^1} U \di x$. Since the exponential function is convex, by Jensen's inequality
\be
\exp(\int_{\bt^1} U \di x) \leq \int_{\bt^1} e^U \di x = 1,
\ee
and thus $\int_{\bt^1} U \di x \leq 0$. For a lower bound, we observe that, since $U \leq \int_{\bt^1} U \di x + \frac{1}{6} \e^{-2}$,
\be
1 = \int_{\bt^1} e^U \di x \leq \exp \left ( \int_{\bt^1} U \di x + \frac{1}{6} \e^{-2} \right ) .
\ee
Hence $\int_{\bt^1} U \di x \geq \frac{1}{6} \e^{-2}$, and we deduce that $\| U \|_{L^\infty} \leq \frac{1}{3} \e^{-2}$.

For the Lipschitz regularity of $U$, note that
\begin{align}
|U(x) - U(y)| & \leq \e^{-2} \int | G_1(x-z) - G_1(y-z) | \di |\rho - e^U| \\
& \leq 2 \e^{-2}  \| G_1 \|_{\text{Lip}} |x-y| \\
& \leq \e^{-2} |x-y| .
\end{align}

Next we look at the Lipschitz regularity of $\widehat U'$. Note that $\widehat U '$ has the representation
\be
\widehat U ' = \e^{-2} G_1 ' \ast (1 - e^U).
\ee
Hence
\begin{align}
|\widehat U '(x) - \widehat U '(y) | &\leq \e^{-2} \left | \int \left ( e^{U(x-z)} - e^{U(y-z)} \right ) G_1'(z) \di z \right | \\
& \leq \e^{-2} \int_{\bt^1} \int_0^1 |U(x-z) - U(y-z)| e^{\alpha U(x-z) + (1-\alpha) U(y-z)} | G_1'(z)| \di \alpha \di z \\
& \leq \e^{-2} |x-y| \| U \|_{\Lip}\int_0^1  \int_{\bt^1}  e^{\alpha U(x-z) + (1-\alpha) U(y-z)} | G_1'(z)| \di z \di \alpha \\
& \leq \e^{-4} |x-y| \|  G_1' \|_{L^\infty} \int_0^1 \| e^{U(x-\cdot)} \|_{L^1}^\alpha  \| e^{U(y-\cdot)} \|_{L^1}^{1-\alpha} \di \alpha .
\end{align}
Since $\|  G_1' \|_{L^\infty} \leq \frac{1}{2}$ and $ \| e^U \|_{L^1} = 1$, we obtain
\be
\| \widehat U'  \|_{\text{Lip}} \leq \frac{1}{2} \e^{-4} .
\ee
This completes the proof of Parts (i) and (ii).

\noindent \emph{Part (iii):}
The difference between the potentials satisfies the equation
\be
- \e^2 (\widehat U_1 - \widehat U_2 )'' = e^{U_2} - e^{U_1} .
\ee
Testing this with $\widehat U_1 - \widehat U_2$ gives
\be
\e^2 \| \widehat U_1 ' - \widehat U_2 ' \|_{L^2}^2 = - \int_{\bt^1} (\widehat U_1 - \widehat U_2 ) (e^{U_1} - e^{U_2}) \di x .
\ee
Rearranging gives
\be
\e^2 \| \widehat U_1 ' - \widehat U_2 ' \|_{L^2}^2 + \int_{\bt^1} ( U_1 -  U_2 ) (e^{U_1} - e^{U_2}) \di x =  \int_{\bt^1} (\bar U_1 - \bar U_2 ) (e^{U_1} - e^{U_2}) \di x .
\ee
Writing 
\be
e^{U_1} - e^{U_2} = \int_0^1 (U_1 - U_2) e^{\alpha U_1} e^{(1-\alpha)U_2} \di \alpha,
\ee
we find that
\begin{align}
\| e^{U_1} - e^{U_2} \|_{L^1(\bt^1)} & \leq \int_0^1 \int_{\bt^1} (U_1 - U_2) e^{\alpha U_1} e^{(1-\alpha)U_2} \di x \di \alpha \\
& \leq \left ( \int_0^1 \int_{\bt^1} (U_1 - U_2)^2 e^{\alpha U_1} e^{(1-\alpha)U_2} \di x \di \alpha \right )^{1/2} \left ( \int_0^1 \int_{\bt^1} e^{\alpha U_1} e^{(1-\alpha)U_2} \di x \di \alpha \right )^{1/2} \\
& \leq \left ( \int_0^1 \int_{\bt^1} (U_1 - U_2)^2 e^{\alpha U_1} e^{(1-\alpha)U_2} \di x \di \alpha \right )^{1/2} \left ( \int_0^1  \| e^{ U_1} \|_{L^1(\bt^1)}^\alpha \| e^{ U_1} \|_{L^1(\bt^1)}^{1-\alpha} \di \alpha \right )^{1/2} \\
& \leq \left ( \int_0^1 \int_{\bt^1} (U_1 - U_2)^2 e^{\alpha U_1} e^{(1-\alpha)U_2} \di x \di \alpha \right )^{1/2} \\
& \leq \left ( \int_{\bt^1} (U_1 - U_2)  (e^{U_1} - e^{U_2})  \di x  \right )^{1/2} ,
\end{align}
where we have used that $e^{U_i}$ has total integral 1 for $i=1,2$.

Then, applying H\"older inequality,
\begin{align}
\int_{\bt^1} (\bar U_1 - \bar U_2 ) (e^{U_1} - e^{U_2}) \di x & \leq  \| \bar U_1 - \bar U_2 \|_{L^\infty} \| e^{U_1} - e^{U_2} \|_{L^1(\bt^1)} \\
& \leq  \| \bar U_1 - \bar U_2 \|_{L^\infty}  \left ( \int_{\bt^1} (U_1 - U_2)  (e^{U_1} - e^{U_2})  \di x  \right )^{1/2} .
\end{align}
Thus
\be
\e^2 \| \widehat U_1 ' - \widehat U_2 ' \|_{L^2}^2 + \frac{1}{2} \int_{\bt^1} ( U_1 -  U_2 ) (e^{U_1} - e^{U_2}) \di x = \frac{1}{2} \| \bar U_1 - \bar U_2 \|_{L^\infty}^2 ,
\ee
whence
\be
 \| \widehat U_1 ' - \widehat U_2 ' \|_{L^2} \leq \frac{\e^{-1}}{2} \| \bar U_1 - \bar U_2 \|_{L^\infty} , 
\ee

Using the Kantorovich duality characterisation of $W_1$, we have
\begin{align}
\| \bar U_1 - \bar U_2 \|_{L^\infty} & = \e^{-2} \| G_1 \ast (\rho_1 - \rho_2) \|_{L^\infty} \\
& \leq \e^{-2} \| G_1 \|_{\text{Lip}} W_1 (\rho_1, \rho_2) \\
& \leq \frac{\e^{-2}}{2} W_1 (\rho_1, \rho_2).
\end{align}
This completes the proof.

\end{proof}

\subsection{Case $d=2,3$: Wasserstein Stability} \label{sec:electric_23}

In dimension $d=2,3$, we use a `Loeper-type' stability estimate for the electric field. In these estimates, we work under the assumption that \emph{both} solutions have $L^\infty$ spatial density $\rho$. The $L^2$ deviation between the electric fields can then be controlled in terms of the second order Wasserstein distance between the inducing densities. For electron Vlasov--Poisson, such estimates were first obtained by Loeper \cite{Loeper}, and later quantified with respect to quasineutral scaling in \cite{IHK2}. Similar estimates for ionic Vlasov--Poisson were obtained in \cite{GPI-WP, GPI-MFQN}. The key improvement here compared to previous results is that we obtain constants that degenerate \emph{polynomially} in $\e$. In fact, the dependence on $\e$ is identical to that seen in the Vlasov--Poisson system for electrons \cite{IHK2}. 

\begin{prop} \label{prop:regU} Let $d =2,3$.
(i) Let $h \in L^\infty (\bt^d)$. Then there exists a unique  $U \in W^{1,2}(\bt^d)$ satisfying
$$
\e^2\Delta U =e^U-h .
$$
Moreover, $\nabla U$ is a log-Lipschitz function satisfying
\be \label{nonlinear-logLipschitz}
|\nabla U(x) - \nabla U(y)| \leq C \| h \|_{L^\infty} \e^{-2} |x-y| \left ( 1+ (\log|x-y|)_+ \right )
\ee

\noindent (ii) If, for $i=1,2$, $0 \leq h_i \in L^\infty$, with $U_i \in W^{1,2}$ satisfying,
\be
\e^2\Delta U_i =e^{U_i}-h_i,
\ee
and
\be
\int_{\bt^d} h_1 \di x = \int_{\bt^d} h_2 \di x, 
\ee
then
\be \label{nonlinear-Loeper}
\| \nabla U_1 - \nabla U_2 \|_{L^2} \leq \e^{-2} \max_i \| h_i \|_{L^\infty}^{1/2} W_2(h_1, h_2)
\ee

\end{prop}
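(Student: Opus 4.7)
For existence and uniqueness, I would use the strict convexity of the functional
$$J(U) := \frac{\e^2}{2}\int_{\bt^d}|\nabla U|^2\,dx + \int_{\bt^d} e^U\,dx - \int_{\bt^d} hU\,dx$$
on $W^{1,2}(\bt^d)$, which is coercive once $\int h\,dx > 0$ (the consistency condition forced by integrating the equation). Its unique minimiser is the weak solution. Uniqueness can also be seen directly: the difference $W := U_1 - U_2$ of two solutions satisfies $\e^2\Delta W = e^{U_1} - e^{U_2}$, and testing against $W$ together with monotonicity of $\exp$ forces $\nabla W \equiv 0$, while the identity $\int e^{U_i} = \int h$ pins down the constant. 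For the log-Lipschitz bound I would decompose $U = \bar U + \hat U$, with $\bar U$ solving the \emph{linear} Poisson problem $\e^2\Delta\bar U = \bar h - h$ (where $\bar h := |\bt^d|^{-1}\int h\,dx$) and $\hat U$ solving the residual nonlinear equation $\e^2\Delta\hat U = e^{\bar U + \hat U} - \bar h$. The classical Newtonian-potential estimate gives
$$|\nabla\bar U(x) - \nabla\bar U(y)| \leq C\e^{-2}\|h\|_{L^\infty}|x-y|(1+\log_+|x-y|^{-1}),$$
while $L^\infty$ control on $\bar U, \hat U$ (via maximum-principle/barrier arguments as in \cite{GPI-WP}) places $e^U \in L^\infty$, so elliptic regularity upgrades $\hat U$ to $W^{2,p}$ for every $p < \infty$, leaving $\nabla\hat U$ Lipschitz and absorbable into the log-Lipschitz bound for $\nabla\bar U$.

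\textbf{Part (ii).} This is a Loeper-type stability argument, adapted to the Poisson-Boltzmann coupling. Setting $W := U_1 - U_2$, subtracting the two equations, multiplying by $W$, and integrating by parts produces
$$\e^2\|\nabla W\|_{L^2}^2 + \int_{\bt^d}(e^{U_1}-e^{U_2})(U_1-U_2)\,dx = \int_{\bt^d}(h_1-h_2)W\,dx.$$
Crucially, the nonlinear cross-term is non-negative by monotonicity of $\exp$ and can be discarded, leaving the same linear-style identity as in classical Vlasov-Poisson. To estimate the right-hand side I introduce the $W_2$-displacement interpolation $(h_s)_{s\in[0,1]}$ between $h_1$ and $h_2$ with associated velocity field $v_s$ satisfying $\partial_s h_s + \nabla\cdot(v_s h_s) = 0$ and $\int|v_s|^2 h_s\,dx = W_2(h_1,h_2)^2$ (constant in $s$ along the geodesic). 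Integration by parts in $(s,x)$ gives
$$\Bigl|\int_{\bt^d}(h_1-h_2)W\,dx\Bigr| = \Bigl|\int_0^1\!\!\int_{\bt^d} \nabla W\cdot v_s h_s\,dx\,ds\Bigr|,$$
so Cauchy-Schwarz, combined with McCann's displacement convexity of $L^\infty$ on the flat torus (which yields $\|h_s\|_{L^\infty}\leq \max_i\|h_i\|_{L^\infty}$), produces
$$\Bigl|\int_{\bt^d}(h_1-h_2)W\,dx\Bigr| \leq \|\nabla W\|_{L^2}\,\max_i\|h_i\|_{L^\infty}^{1/2}\,W_2(h_1,h_2),$$
and dividing by $\|\nabla W\|_{L^2}$ closes the estimate.

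\textbf{Main obstacle.} The quantitative heart of the statement is keeping the $\e$-prefactors polynomial, specifically $\e^{-2}$, matching exactly the linear (electron) case. The exponential nonlinearity is in fact friendly in both parts: in (i) the nonlinear correction $\hat U$ is strictly smoother than the linear part $\bar U$, contributing no additional $\e$-degeneracy once absorbed into constants; in (ii) the term $(e^{U_1}-e^{U_2})(U_1-U_2)$ is non-negative and can simply be dropped, preserving Loeper's clean scaling. The most subtle technical point is the use of displacement convexity of $L^\infty$ along the Wasserstein geodesic on $\bt^d$, which is available because the torus is flat and the hypothesis $h_i \in L^\infty$ guarantees absolute continuity of the optimal transport map.
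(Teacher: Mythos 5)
Your overall strategy matches the paper's on both parts, but with some extra machinery in part (i) that the paper avoids, plus one small technical slip there. For the log-Lipschitz bound, the paper argues directly: Lemma~\ref{lem:eU-Lp} gives the clean quantitative bound $\| e^U \|_{L^\infty} \leq \| h \|_{L^\infty}$ (via testing the equation with $e^{(p-1)U}$ and sending $p\to\infty$), hence $\|\Delta U\|_{L^\infty} \leq 2\e^{-2}\|h\|_{L^\infty}$, and the log-Lipschitz estimate for $\nabla U$ follows at once from standard Poisson regularity. Your decomposition $U = \bar U + \widehat U$ (which the paper only uses in $d=1$, Proposition~\ref{prop:regU-1d}) is an unnecessary detour here: once $e^U \in L^\infty$ you could treat $U$ in one piece. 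Moreover, your claim that $\Delta\widehat U \in L^\infty$ ``upgrades $\widehat U$ to $W^{2,p}$ for every $p<\infty$, leaving $\nabla\widehat U$ Lipschitz'' is not correct as stated --- $W^{2,p}$ for finite $p$ gives only Hölder-$(1-d/p)$ regularity for $\nabla\widehat U$, and $\Delta\widehat U \in L^\infty$ yields log-Lipschitz for $\nabla\widehat U$, not Lipschitz. This does not break your argument (log-Lipschitz is still absorbable into the stated bound), but you should state it accurately. Your appeal to ``maximum-principle/barrier arguments'' to control $e^U$ is also vaguer than the paper's explicit $L^p$ estimate and makes the $\|h\|_{L^\infty}$-dependence of the constant less transparent.

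For part (ii), you are doing exactly what the paper does, up to unpacking a black box: after testing the difference equation with $U_1-U_2$ and dropping the monotone term $(e^{U_1}-e^{U_2})(U_1-U_2)\geq 0$, the paper invokes Loeper's Theorem 2.9 to bound $\|\nabla\Delta^{-1}(h_1-h_2)\|_{L^2}$ by $\max_i\|h_i\|_{L^\infty}^{1/2}W_2(h_1,h_2)$, whereas you reproduce Loeper's proof via displacement interpolation and the density bound along Wasserstein geodesics. Either route is fine; quoting the lemma is tidier given it is already in the literature.
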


For the proof of the above proposition, we will require the following integrability estimate for the nonlinearity $e^U$: notice that the constant here is independent of $\e$.

\begin{lemma}\label{lem:eU-Lp} Let $d\geq1$.
Let $h\in L^\infty(\bt^d)$ and let $U \in W^{1,2}(\bt^d)$ be a solution of 
\be
\e^2\Delta U =e^U-h .
\ee
Then, for all $p \in [1,+\infty]$,
\be
\| e^U \|_{L^p} \leq \| h \|_{L^p} .
\ee
\end{lemma}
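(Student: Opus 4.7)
The plan is to combine a nonlinear maximum principle with a Moser-type energy estimate. I would prove the endpoint cases $p=\infty$ and $p=1$ first, and then obtain intermediate $p$ by testing the equation against the power $e^{(p-1)U}$, integrating by parts, and applying H\"older.

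For $p=\infty$ the intuition is that at a maximum point $x_0$ of $U$ one has $\Delta U(x_0)\le 0$, so $e^{U(x_0)}\le h(x_0)\le \|h\|_{L^\infty}$. Since the regularity is only $U\in W^{1,2}$, I would make this rigorous by testing the weak formulation of $\varepsilon^2\Delta U=e^U-h$ against $(U-K)_+$ with $K:=\log\|h\|_{L^\infty}$: integration by parts yields
\[
\varepsilon^2 \int_{\bt^d} |\nabla (U-K)_+|^2 \, dx \;=\; \int_{\{U>K\}} (h-e^U)(U-K) \, dx \;\le\; \int_{\{U>K\}} (\|h\|_{L^\infty}-e^K)(U-K) \, dx \;=\; 0,
\]
so $(U-K)_+\equiv 0$ and hence $\|e^U\|_{L^\infty}\le\|h\|_{L^\infty}$. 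For $p=1$, integrate the equation directly over $\bt^d$: the Laplacian term vanishes, giving $\int e^U\,dx=\int h\,dx$, and since $e^U>0$ this yields $\|e^U\|_{L^1}\le\|h\|_{L^1}$.

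For $p\in(1,\infty)$, the $L^\infty$ bound from the first step ensures that $e^{(p-1)U}\in L^\infty$ with $\nabla e^{(p-1)U}=(p-1)e^{(p-1)U}\nabla U\in L^2$, so it is a legitimate test function. Multiplying $\varepsilon^2\Delta U=e^U-h$ by $e^{(p-1)U}$ and integrating by parts gives
\[
-\varepsilon^2 (p-1)\int_{\bt^d} |\nabla U|^2\, e^{(p-1)U}\, dx \;=\; \int_{\bt^d} e^{pU}\, dx \;-\; \int_{\bt^d} h\, e^{(p-1)U}\, dx.
\]
The left-hand side is $\le 0$, so $\int e^{pU}\,dx\le\int h\, e^{(p-1)U}\,dx$. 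Applying H\"older's inequality with conjugate exponents $p$ and $p/(p-1)$ to the right-hand side produces
\[
\int_{\bt^d} e^{pU}\, dx \;\le\; \|h\|_{L^p}\Bigl(\int_{\bt^d} e^{pU}\, dx\Bigr)^{(p-1)/p},
\]
and dividing through by $\bigl(\int e^{pU}\bigr)^{(p-1)/p}$ yields the desired bound $\|e^U\|_{L^p}\le\|h\|_{L^p}$.

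The only non-algebraic step is the justification of integration by parts for the $W^{1,2}$ solution, which is the main (mild) obstacle; this is dispatched by the $L^\infty$ bound obtained in Step~1, so the argument proceeds cleanly in the order endpoint $\infty$, endpoint $1$, interior exponent.
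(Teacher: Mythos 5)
Your proof is correct, and the core computation for $1<p<\infty$ --- testing against $e^{(p-1)U}$, integrating by parts, dropping the nonnegative gradient term, and applying H\"older --- is exactly the paper's identity. Where you diverge is in the handling of the endpoints and of rigor. The paper carries out the integration by parts only formally, asserts that it ``can be made rigorous using a truncation procedure'' without spelling it out, and obtains the $p=\infty$ bound by letting $p\to\infty$ in the $L^p$ estimate. You instead prove $p=\infty$ first via a Stampacchia-type argument, testing against $(U-K)_+$ with $K=\log\|h\|_{L^\infty}$; this both gives the sup bound directly and ensures $e^{(p-1)U}\in W^{1,2}\cap L^\infty$, so that it is an admissible test function for the weak formulation and the intermediate-$p$ computation needs no further truncation. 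Your ordering therefore supplies precisely the rigor that the paper only alludes to, and also gives an elementary independent proof of $p=1$ by integrating the equation. Both routes are sound; yours is more self-contained, while the paper's limiting argument for $p=\infty$ avoids a second variational identity at the cost of leaving the truncation step implicit.
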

\begin{proof}
We first consider the case $p<\infty$.
The proof follows from the following a priori estimate: formally testing the equation with the function $e^{(p-1)U}$ and integrating by parts gives
\be
0 \leq \e^2 (p-1) \int_{\bt^d} e^{(p-1)U} |\nabla U|^2 \di x =  \int_{\bt^d} e^{(p-1)U} h  \di x -  \int_{\bt^d} e^{p U}  \di x .
\ee
By rearranging terms and applying H\"older's inequality, we obtain
\be
\| e^U \|_{L^p}^p \leq \| e^U \|_{L^p}^{p-1} \| h \|_{L^p},
\ee
and thus
\be
\| e^U \|_{L^p} \leq \| h \|_{L^p}\qquad \text{for all }p< + \infty .
\ee
This argument can be made rigorous using a truncation procedure.

Letting $p\to + \infty$ in the bound above, we conclude the validity of the lemma also in the case $p=+\infty$.
\end{proof}

The second ingredient is the following stability estimate for the Poisson--Boltzmann equation.

\begin{lemma} \label{lem:U-stab}
Let $d=2,3$.
For $i=1,2$, let $h_i \in L^\infty(\bt^d)$ and let $U_i$ satisfy 
\be
\e^2\Delta U_i =e^{U_i}-h_i .
\ee
Then
\be
\| \nabla U_1 - \nabla U_2 \|_{L^2} \leq \e^{-2} \| 
\nabla \Delta^{-1} (h_1 - h_2) \|_{L^2}
\ee
\end{lemma}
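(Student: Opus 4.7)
The plan is to derive the inequality from a direct energy estimate on the difference equation, exploiting the monotonicity of the exponential nonlinearity to dispose of the unfavorable terms.

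Setting $W := U_1 - U_2$ and subtracting the two equations gives
\be
\e^2 \Delta W = (e^{U_1} - e^{U_2}) - (h_1 - h_2).
\ee
The natural move is to test this against $W$ itself. Integrating by parts on the left yields $-\e^2 \|\nabla W\|_{L^2}^2$. On the right, the exponential term contributes $\int (e^{U_1} - e^{U_2})(U_1 - U_2) \di x$, which is \emph{nonnegative} because $t \mapsto e^t$ is monotone increasing, so that $(e^a - e^b)(a-b) \geq 0$ pointwise. After rearranging signs, this term has the favorable sign and can simply be dropped.

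For the source term, the idea is to absorb one derivative from $W$ onto $h_1 - h_2$ via $\Delta^{-1}$. Writing $\psi := \Delta^{-1}(h_1 - h_2)$ (well-defined up to a constant, and with mean-zero representative when $h_1-h_2$ has mean zero, which holds here since integrating the PDE gives $\int e^{U_i} \di x = \int h_i \di x$ only if one assumes equal masses; otherwise one replaces $h_1 - h_2$ with its mean-zero part, noting the constant part pairs trivially with $W$ after a harmless normalization), one integrates by parts:
\be
\int_{\bt^d} (h_1 - h_2) W \di x = \int_{\bt^d} \Delta \psi \cdot W \di x = -\int_{\bt^d} \nabla \psi \cdot \nabla W \di x.
\ee
Applying Cauchy--Schwarz then gives the bound $\|\nabla \psi\|_{L^2} \|\nabla W\|_{L^2}$.

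Combining these observations, the energy identity becomes
\be
\e^2 \|\nabla W\|_{L^2}^2 + \int_{\bt^d} (e^{U_1}-e^{U_2})(U_1-U_2) \di x \leq \|\nabla \Delta^{-1}(h_1 - h_2)\|_{L^2} \|\nabla W\|_{L^2}.
\ee
Dropping the nonnegative second term on the left and dividing through by $\|\nabla W\|_{L^2}$ (trivial if this is zero) yields the claimed estimate. The only nontrivial point to justify is that $W \in W^{1,2}$ is an admissible test function and that $e^{U_i} \in L^2$ so that all integrals make sense --- the former follows from the hypothesis $U_i \in W^{1,2}$ and the latter from Lemma~\ref{lem:eU-Lp} with $p=2$ together with $h_i \in L^\infty \subset L^2$; this is the only step requiring any mild regularity bookkeeping, and it is routine.
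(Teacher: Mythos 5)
Your argument is correct and matches the paper's proof essentially verbatim: subtract the equations, test against $W = U_1 - U_2$, discard the nonnegative term coming from $(e^{U_1}-e^{U_2})(U_1-U_2)\ge 0$, and bound the source term by $\|\nabla\Delta^{-1}(h_1-h_2)\|_{L^2}\|\nabla W\|_{L^2}$. The only cosmetic difference is that the paper invokes Parseval--Plancherel for this last bound, whereas you obtain it by an explicit integration by parts against $\psi = \Delta^{-1}(h_1-h_2)$ followed by Cauchy--Schwarz; these are the same estimate, and your aside on the mean-zero normalization of $\Delta^{-1}$ is more care than the paper bothers with (in the application, Proposition~\ref{prop:regU}(ii), the equal-mass hypothesis makes it moot).
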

\begin{proof}
Subtracting the equations for $U_1$ and $U_2$ gives
\be
\e^2\Delta (U_1 - U_2) =(e^{U_1}-e^{U_2})-(h_1 - h_2) .
\ee
After testing with $(U_1 - U_2) $ and integrating by parts, we obtain:
\be
\e^2 \int_{\bt^d} |\nabla U_1 - \nabla U_2|^2 \di x = \int_{\bt^d} (h_1 - h_2) (U_1 - U_2) \di x - \int_{\bt^d} (e^{U_1}-e^{U_2}) (U_1 - U_2) \di x .
\ee
Since $(e^x - e^y)(x-y) \geq 0$ for any $x,y \in \br$, we have
\be
\e^2 \int_{\bt^d} |\nabla U_1 - \nabla U_2|^2 \di x \leq \int_{\bt^d} (h_1 - h_2) (U_1 - U_2) \di x .
\ee
By Parseval-Plancherel,
\be
\e^2 \int_{\bt^d} |\nabla U_1 - \nabla U_2|^2 \di x \leq \| \nabla (U_1 - U_2) \|_{L^2} \| \nabla \Delta^{-1} (h_1 - h_2)\|_{L^2} .
\ee
By applying Young's inequality for products with a small parameter, we obtain
\be
\| \nabla U_1 - \nabla U_2 \|_{L^2} \leq \e^{-2} \| 
\nabla \Delta^{-1} (h_1 - h_2) \|_{L^2} 
\ee
as required.
\end{proof}

\begin{proof}[Proof of Proposition~\ref{prop:regU}]

The existence and uniqueness of $U$ for $h \in L^\infty(\bt^d)$ is obtained as in \cite{GPI-WP}.

For the log-Lipschitz regularity, we first apply Lemma~\ref{lem:eU-Lp} so as to obtain the estimate
\be
\| e^U \|_{L^\infty} \leq \| h \|_{L^\infty} .
\ee
Thus
\be
\| \Delta U \|_{L^\infty} \leq 2 \e^{-2}  \| h \|_{L^\infty} .
\ee
The log-Lipschitz bound \eqref{nonlinear-logLipschitz} then follows from regularity estimates for solutions of the Poisson equation -- see for example \cite{Yudovich, BM, GPI1}.

Below, in Lemma~\ref{lem:U-stab}, we prove that 
\be
\| \nabla U_1 - \nabla U_2 \|_{L^2} \leq \e^{-2} \| 
\nabla \Delta^{-1} (h_1 - h_2) \|_{L^2}
\ee
We then control the $H^{-1}$ norm by applying the following estimate, due to Loeper \cite[Theorem 2.9]{Loeper}:
\be
 \| \nabla \Delta^{-1} (h_1 - h_2) \|_{L^2} \leq \max_i \| h_i \|_{L^\infty}^{1/2} W_2(h_1, h_2) .
\ee
This concludes the proof of estimate \eqref{nonlinear-Loeper}.
\end{proof}

\subsection{Case $d=3$: Gain of Integrability for the Nonlinearity}

Applying Proposition~\ref{prop:regU} in practice requires $L^\infty$ estimates on the spatial density $\rho$ for solutions of the Vlasov--Poisson system. However, in dimension $d=3$, the known techniques for obtaining such estimates for the electron Vlasov--Poisson system on $\bt^3$ rely on the representation of the electric field as a convolution with the Coulomb kernel, in order to make use of the interplay between the second-order nature of the characteristic flow and the full phase-space density. In the ionic case this representation is not available. 

Previously, in the study of global well-posedness \cite{GPI-WP}, this obstacle was overcome by making use of the decomposition $U = \bar U + \widehat U$, where 
\be \label{def:Usplit-3D}
- \e^2 \Delta \bar U = \rho - 1, \; \int_{\bt^3} \bar U \di x = 0, \qquad - \e^2 \Delta \widehat U = 1 - e^U .
\ee
The existence and uniqueness of $\bar U$ and $\widehat U$ was shown in \cite{GPI-WP}. As we discussed for $d=1$ above, for $d=3$ the term $\widehat U$ is more regular than $\bar U$ ($C^{2, \alpha}$, for some $\alpha > 0$, if $\rho \in L^p(\bt^3)$ with $p > 3/2$) while the `singular' part $\bar U$ has a convolution representation, since it satisfies a linear Poisson equation.
However, the additional regularity on $\widehat U$ is obtained at the expense of constants that diverge exponentially as $\e$ tends to zero \cite{GPI-MFQN}. 

In order to achieve a single exponential rate in Theorem~\ref{thm:main} for $d=3$, we will need to control a suitably strong norm on $\widehat U$ (in terms of a sufficiently \emph{weak} norm on $\rho$), but with constants that diverge \emph{polynomially} as $\e$ tends to zero.
To do this, we prove a gain of integrability for the nonlinearity $e^U$ (which forces the Poisson equation satisfied by $\widehat U$) compared to $\rho$. 
Crucially, the bounds in our new estimate degenerate only polynomially with respect to $\e$ rather than exponentially.

The key idea is to exploit the equation satisfied by $e^U$.
Since $\Delta(e^U) = e^U \Delta U + e^U |\nabla U|^2$, it follows that
\be \label{eq:eU}
-\e^2 \Delta(e^U) + \e^2 |\nabla U|^2 e^U = e^U(\rho - e^U) .
\ee

\begin{lemma} \label{lem:eU-Lp-eps}
	Let $d=3$. 
	Assume that $\rho \in L^q$ for some $3/2 < q < + \infty$. 
	Then, for all $r$ such that $q \leq r < + \infty$,
there exists an exponent $\alpha = \alpha(q,r)$, defined 
by
\be \label{def:alpha}
\alpha(q,r) : = \frac{q^{-1} - r^{-1}}{2/3 - q^{-1}} .
\ee
and constants $C_{q,r}, c_{q,r}>0$ 
such that
\be
\| e^{U} \|_{L^r} \leq C_{q,r} (\e^{-2})^{\alpha} \| \rho \|_{L^{q}}^{\alpha + 1}  + c_{q,r} \e^2,
\ee
\end{lemma}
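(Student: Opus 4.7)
The plan is to test equation \eqref{eq:eU} for $w := e^U$, namely $-\e^2 \Delta w + w^2 + \e^2 w|\nabla U|^2 = w\rho$, against the power $w^{r-2}$ and to exploit the resulting $H^1$-control of $w^{(r-1)/2}$ via the three-dimensional Sobolev embedding $H^1(\bt^3) \hookrightarrow L^6(\bt^3)$. Using the chain-rule identity $\nabla w = w\nabla U$, so that $w^{r-1}|\nabla U|^2 = \frac{4}{(r-1)^2}|\nabla w^{(r-1)/2}|^2$, together with integration by parts, the energy identity that emerges is
\begin{equation*}
\frac{4\e^2}{r-1}\|\nabla w^{(r-1)/2}\|_{L^2}^2 + \|w\|_{L^r}^r = \int_{\bt^3} w^{r-1}\rho \leq \|\rho\|_{L^q}\|w\|_{L^{(r-1)q'}}^{r-1},
\end{equation*}
with $q'=q/(q-1)$ and a final H\"older step.

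Because the assumption $r\geq q>3/2$ forces $r \leq (r-1)q' \leq 3(r-1)$, Lyapunov interpolation gives $\|w\|_{L^{(r-1)q'}} \leq \|w\|_{L^r}^\mu \|w\|_{L^{3(r-1)}}^{1-\mu}$ with $\mu$ determined by $\frac{1}{(r-1)q'} = \frac{\mu}{r} + \frac{1-\mu}{3(r-1)}$, and an elementary calculation substituting $q'=q/(q-1)$ shows that this $\mu$ is precisely $1/(\alpha+1)$ for $\alpha$ as in \eqref{def:alpha}. The upper endpoint is handled by Sobolev on $f=w^{(r-1)/2}$:
\begin{equation*}
\|w\|_{L^{3(r-1)}}^{r-1} = \|w^{(r-1)/2}\|_{L^6}^2 \lesssim \|\nabla w^{(r-1)/2}\|_{L^2}^2 + \|w\|_{L^{r-1}}^{r-1} \lesssim \|\nabla w^{(r-1)/2}\|_{L^2}^2 + \|w\|_{L^r}^{r-1},
\end{equation*}
where $|\bt^3|=1$ is used in the final step. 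Plugging these bounds into the energy identity produces on the right a term of the form $C\|\rho\|_{L^q}\|w\|_{L^r}^{\mu(r-1)}\|\nabla w^{(r-1)/2}\|_{L^2}^{2(1-\mu)}$ plus a residual $C\|\rho\|_{L^q}\|w\|_{L^r}^{r-1}$. Apply Young's inequality to the first term with conjugate exponents $(1/(1-\mu), 1/\mu)$ and Young parameter tuned to exactly $\e^2/(r-1)$, so that the $\|\nabla w^{(r-1)/2}\|_{L^2}^2$ contribution is absorbed by the left-hand side; the price is a factor $(\e^{-2})^{(1-\mu)/\mu} = (\e^{-2})^\alpha$ paired with $\|\rho\|_{L^q}^{1/\mu}=\|\rho\|_{L^q}^{\alpha+1}$. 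Dividing through by $\|w\|_{L^r}^{r-1}$ then yields $\|w\|_{L^r} \leq C_{q,r}(\e^{-2})^\alpha\|\rho\|_{L^q}^{\alpha+1} + C_{q,r}\|\rho\|_{L^q}$, and a last elementary Young's inequality recasts the residual linear term into $c_{q,r}\e^2 + C_{q,r}(\e^{-2})^\alpha\|\rho\|_{L^q}^{\alpha+1}$.

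The main obstacle is tuning the $\e$-dependence sharply: the polynomial order $(\e^{-2})^\alpha$---as opposed to an exponential degeneracy as in some earlier works on the quasineutral limit---emerges only because the Young parameter is tuned to precisely $\e^2/(r-1)$ and because the Hölder interpolation exponent $\mu$ satisfies the algebraic identity $\mu = 1/(\alpha+1)$, which encodes the compatibility between the interpolation bracket $[r, 3(r-1)]$ and the 3D Sobolev-critical exponent. The formal divergence $\alpha \to \infty$ as $q \to 3/2^+$ is a direct reflection of the critical threshold $W^{1,2} \hookrightarrow L^6$ in $\bt^3$, and any looser choice in the Young or interpolation steps would produce either an inadmissible exponent or a worse power of $\e^{-2}$.
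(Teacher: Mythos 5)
Your proof is correct and follows essentially the same strategy as the paper's: testing the equation for $e^U$ against $e^{(r-2)U}$ to obtain the energy identity, exploiting the embedding $H^1(\bt^3)\hookrightarrow L^6(\bt^3)$, interpolating with an exponent that satisfies $\mu=1/(\alpha+1)$, and tuning Young's inequality so the gradient term is absorbed at the price of the factor $(\e^{-2})^\alpha$. The only organizational difference is bookkeeping: the paper subtracts the mean and uses the Poincar\'e--Sobolev inequality, splitting the integral into $I_1,I_2$ via $(a+b)^{2(1-\theta)}\leq 2^{1-\theta}(a^{2(1-\theta)}+b^{2(1-\theta)})$, whereas you invoke the full Sobolev inequality (keeping the $L^{r-1}$ lower-order term) and split at the $(a+b)^{1-\mu}$ stage after a Lyapunov interpolation --- both routes produce the same right-hand side after expansion.
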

\begin{proof}
If $r=q$ the result follows directly from Lemma~\ref{lem:eU-Lp}. For $r > q$,
test equation \eqref{eq:eU} with the function $e^{(r-2) U}$; after integrating by parts, we obtain the following equality:
\be \label{est:first-test}
\frac{4}{r-1} \e^2 \int_{\bt^3} |\nabla e^{\frac{r-1}{2}U}|^2 \di x + \int_{\bt^3} e^{rU} \di x = \int_{\bt^3} e^{(r-1)U}\rho  \di x .
\ee
By the Sobolev-Gagliardo-Nirenberg inequality on the torus \cite{Strichartz, BenyiOh}, there exists a constant $C>0$, independent of $r$, such that
\be \label{est:SGN}
\left \| e^{\frac{r-1}{2}U} - \left \langle e^{\frac{r-1}{2}U}\right \rangle \right \|_{L^6} \leq C \|\nabla e^{\frac{r-1}{2}U}\|_{L^2} ,
\ee
where $\langle \cdot \rangle$ denotes the average value:
\be
\left \langle e^{\frac{r-1}{2}U} \right \rangle : = \frac{1}{|\bt^3|} \int_{\bt^3} e^{\frac{r-1}{2}U} \di x = \int_{\bt^3} e^{\frac{r-1}{2}U} \di x,
\ee
where the last equality follows since we have normalised the torus $\bt^3$ such that $|\bt^3|=1$ (Section~\ref{sec:torus}).
We therefore expect that estimate \eqref{est:first-test} will imply that $e^{\frac{r-1}{2}U} \in L^{2 r '} \cap L^6$, if we can control the right hand side.

The right hand side of \eqref{est:first-test} is
\be
 \int_{\bt^3} e^{(r-1)U}\rho  \di x =  \int_{\bt^3} \left( e^{\frac{(r-1)U}{2}} \right)^2 \rho  \di x.
 \ee
 Since $r > q > 3/2$, then $3 > q' > r'$. Thus there exists $\theta \in (0,1)$ such that
 \be \label{def:theta}
\frac{1}{q'} = 1 - \frac{1}{q} =\frac{1-\theta}{3} + \frac{\theta}{r '} , \quad \text{i.e.} \; \theta = \frac{2/3 - 1/q}{2/3- 1/r} .
\ee
We now write
 \be
 \int_{\bt^3} \left( e^{\frac{(r-1)U}{2}} \right)^2 \rho  \di x  = \int_{\bt^3} \left( e^{\frac{(r-1)U}{2}} \right)^{2\theta} \left( e^{\frac{(r-1)U}{2}} \right)^{2(1-\theta)} \rho  \di x .
\ee
To handle the average, we observe that, since $1-\theta < 1$, for all $a,b \geq0$ we have the estimate
\be \label{est:sum-fractional-power}
(a + b)^{2(1-\theta)} \leq 2^{1-\theta}(a^{2(1-\theta)} + b^{2(1-\theta)}).
\ee
By writing $e^{\frac{(r-1)U}{2}} = \left (e^{\frac{(r-1)U}{2}} - \left \langle e^{\frac{(r-1)U}{2}} \right \rangle \right ) + \left \langle e^{\frac{(r-1)U}{2}} \right \rangle$ and applying \eqref{est:sum-fractional-power}, we find that
\begin{multline}
\int_{\bt^3} \left( e^{\frac{(r-1)U}{2}} \right)^{2\theta} \left( e^{\frac{(r-1)U}{2}} \right)^{2(1-\theta)} \rho  \di x \leq 2^{1- \theta} \underbrace{\int_{\bt^3} \left( e^{\frac{(r-1)U}{2}} \right)^{2\theta} \left( e^{\frac{(r-1)U}{2}} - \left \langle e^{\frac{(r-1)U}{2}} \right \rangle \right)^{2(1-\theta)} \rho  \di x}_{=: I_1} \\
+ 2^{1-\theta} \underbrace{ \left \langle e^{\frac{(r-1)U}{2}} \right \rangle^{2(1-\theta)} \int_{\bt^3} \left( e^{\frac{(r-1)U}{2}} \right)^{2\theta}  \rho  \di x}_{=: I_2} .
\end{multline}
To estimate $I_1$, we interpolate between $L^{2r'}$ and $L^6$:
by the choice of $\theta$ in \eqref{def:theta},
\begin{align}
I_1 &= \int_{\bt^3} \left( e^{\frac{(r-1)U}{2}} \right)^{2\theta} \left( e^{\frac{(r-1)U}{2}} - \left \langle e^{\frac{(r-1)U}{2}} \right \rangle \right)^{2(1-\theta)} \rho  \di x \\
 &\leq \| e^{\frac{(r-1)U}{2}} \|_{L^{2 r'}}^{2 \theta} \| e^{\frac{(r-1)U}{2}} - \left \langle e^{\frac{(r-1)U}{2}} \right \rangle \|_{L^6}^{2(1-\theta)} \| \rho \|_{L^q} .
\end{align}
Then, by \eqref{est:SGN}, for some constant $C_{q,r}>0$ independent of $\e$ (that may change from line to line),
\be
I_1 \leq C_{q,r} \| e^{rU} \|_{L^1}^{\theta/r'}  \| \nabla (e^{\frac{(r-1)U}{2}} ) \|_{L^2}^{2(1-\theta)} \| \rho \|_{L^q} .
\ee
Hence
\be
2^{1-\theta} I_1 \leq \| e^{rU} \|_{L^1}^{\theta/r'}  \left ( \frac{4 \e^2}{(1-\theta)(r-1)} \| \nabla (e^{\frac{(r-1)U}{2}} ) \|_{L^2}^2 \right )^{1-\theta} \, C_{q,r} \e^{-2(1-\theta)} \| \rho \|_{L^q} .
\ee
By Young's inequality for products with exponents $\frac{r'}{\theta}, \frac{1}{1-\theta}$ and $\frac{r}{\theta}$,
\be \label{est:I1}
2^{1-\theta} I_1 \leq \frac{\theta}{r'}\| e^{rU} \|_{L^1} + \frac{4 \e^2}{r-1} \| \nabla (e^{\frac{(r-1)U}{2}} ) \|_{L^2}^2 + C_{q,r} \left ( \e^{-2(1-\theta)} \| \rho \|_{L^q} \right )^{r/\theta}.
\ee

For $I_2$, we use only that $e^{\frac{(r-1)U}{2}} \in L^{2r '}$: by H\"older's inequality,
\be
\left \langle e^{\frac{(r-1)U}{2}} \right \rangle = \left \|  e^{\frac{(r-1)U}{2}} \right \|_{L^1} \leq C_r \| e^{\frac{(r-1)U}{2}} \|_{L^{2r'}} \leq C_r \| e^{rU} \|_{L^1}^{1/2r'} .
\ee
Furthermore, by the choice of $\theta$ in \eqref{def:theta},
\begin{align}
\int_{\bt^3} \left( e^{\frac{(r-1)U}{2}} \right)^{2\theta}  \rho  \di x  \leq C \| e^{\frac{(r-1)U}{2}} \|_{L^{2r'}}^{2 \theta} \| \rho \|_{L^q}  \leq C \| e^{rU} \|_{L^{1}}^{\theta/r'} \| \rho \|_{L^q}  .
\end{align}
It follows that (recalling again that $|\bt^3| = 1$)
\begin{align}
2^{1-\theta} I_2 &= 2^{1-\theta} \left \langle e^{\frac{(r-1)U}{2}} \right \rangle^{2(1-\theta)} \int_{\bt^3} \left( e^{\frac{(r-1)U}{2}} \right)^{2\theta}  \rho  \di x \\
& \leq \left ( 2 |\bt^3|^{-  \frac{1}{r'}} \| e^{rU} \|_{L^1}^{1/r'} \right)^{1-\theta} |\bt^3|^{\frac{1-\theta}{3}} \| e^{rU} \|_{L^{1}}^{\theta/r'} \| \rho \|_{L^q}  \\
& \leq \| e^{rU} \|_{L^1}^{1/r'} \, C_r^{1-\theta} \| \rho \|_{L^q}  \\
& \leq \left ( (1-\theta)\| e^{rU} \|_{L^1} \right )^{1/r'} (1-\theta)^{-1/r'} C_r^{1-\theta} \| \rho \|_{L^q} . 
\end{align}
By Young's inequality with exponents $\frac{1}{r}$, $\frac{1}{r'}$, we find that
\be \label{est:I2}
2^{1-\theta} I_2 \leq \frac{(1-\theta)}{r'} \| e^{rU} \|_{L^1} + C_{q,r} \| \rho \|_{L^q}^r . 
\ee
Altogether, by estimates \eqref{est:first-test}, \eqref{est:I1} and \eqref{est:I2} we have
\begin{multline}
\frac{4}{r-1} \e^2 \int_{\bt^3} |\nabla e^{\frac{r-1}{2}U}|^2 \di x + \int_{\bt^3} e^{rU} \di x \leq  \frac{1}{r'} \| e^{rU} \|_{L^1} + \frac{4 \e^2}{r-1} \| \nabla (e^{\frac{(r-1)U}{2}} ) \|_{L^2}^2 \\
+ C_{q,r} \left ( \e^{-2(1-\theta)} \| \rho \|_{L^q} \right )^{r/\theta}
+ C_{q,r} \| \rho \|_{L^q}^r .
\end{multline}
We rearrange this to find
\be
\| e^U \|_{L^r}^r \leq C_{q,r} \left ( \e^{-2(1-\theta)} \| \rho \|_{L^q} \right )^{r/\theta}
+ C_{q,r} \| \rho \|_{L^q}^r  .
\ee
The second term is lower order:
by Young's inequality,
\be
\| \rho \|_{L^q}^r \leq \theta \left (  \e^{-2(1-\theta)} \| \rho \|_{L^q} \right )^{r/\theta} + (1-\theta) \e^{2r}
\ee
Thus
\be
\| e^U \|_{L^r}^r \leq C_{q,r} \left ( \left (  \e^{-2(1-\theta)} \| \rho \|_{L^q} \right )^{r/\theta} +  \e^{2r} \right ).
\ee
Finally, by taking the $r$th root we find that
\be
\| e^U \|_{L^r} \leq C_{q,r} \left (  \e^{-2(\theta^{-1} - 1)} \| \rho \|_{L^q}^{\theta^{-1}}+  \e^{2} \right ).
\ee
Finally, we compute the exponent $\alpha = \theta^{-1} - 1$ as a function of $q$ and $r$:
\be
\alpha(q,r) : =  \theta(q,r)^{-1}-1 = \frac{2/3 - 1/r }{2/3 - 1/q }  -1= \frac{q^{-1} - r^{-1}}{2/3 - q^{-1}} .
\ee

\end{proof}

\section{Stability} \label{sec:Stability}

In this section, we apply the estimates for the Poisson--Boltzmann equation from Section~\ref{sec:ElectricField} to obtain new quasineutral Wasserstein stability estimates for the ionic Vlasov--Poisson system \eqref{eq:vpme}.

\subsection{Case $d=1$} \label{sec:stability_1}

In one dimension, we have a weak-strong type estimate along the lines of \cite{Hauray, IHK1}.
\begin{lemma} \label{lem:stability-1d}
Let $d=1$.
Let $f_1, f_2$ be two weak solutions of the $(VPME)_{\e}$ system \eqref{eq:vpme}.
Assume that
\be
\rho_1(s,x) := \int_{\br} f_1(s,x,v) \di v \in L^1 \left ( [0, t) ; L^\infty(\bt) \right ).
\ee
($f_2$ may be a measure solution). Set
$A(t) : =  \| \rho_1(t,\cdot) \|_{L^\infty(\bt)}$.
Then
\be
W_1(f_1(t), f_2(t)) \leq \e^{-2} \exp\left(C\int_0^t \left( A(s) + \e^{-2} \right)\di s\right) W_1(f_1(0), f_2(0)).
\ee
\end{lemma}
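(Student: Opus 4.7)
The plan is to prove the estimate by a Gronwall argument on a Lagrangian functional that upper-bounds $W_1(f_1(t), f_2(t))$. Let $\pi_0$ be an optimal $W_1$-coupling of $f_1(0)$ and $f_2(0)$, let $(X_i, V_i)(t,\cdot)$ denote the characteristic flow of $f_i$, and set
\[
D(t) := \int_{(\bt\times\br)^2}\bigl(|X_1(t) - X_2(t)|_{\bt} + |V_1(t) - V_2(t)|\bigr)\,d\pi_0.
\]
By construction $D(0) = W_1(f_1(0), f_2(0))$, and pushing the coupling forward under the two flows gives $W_1(f_1(t),f_2(t)) \le D(t)$ as well as the auxiliary bound $W_1(\rho_1(t), \rho_2(t)) \le D(t)$. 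Differentiating along characteristics yields
\[
D'(t) \le D(t) + \int |E_1(t, X_1) - E_2(t, X_2)|\,d\pi_0,
\]
so everything reduces to dominating the integral on the right by $C\bigl(A(t) + \e^{-2}\bigr)D(t)$, up to terms that assemble into the overall $\e^{-2}$ prefactor.

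For the field-difference term I would use the splitting $E_i = \bar E_i + \widehat E_i$ of Proposition~\ref{prop:regU-1d}. The nonlinear piece is handled via part (2) of that proposition, which gives the uniform Lipschitz bound $\|\widehat E_i\|_{\mathrm{Lip}}\le\e^{-4}$, together with a pointwise version of the stability estimate of part (3): retracing the latter proof while using $\|G_1'\|_{L^\infty}\le 1/2$ in place of the $L^2$ duality, and applying the Cauchy--Schwarz argument already present there, yields first $\|e^{U_1} - e^{U_2}\|_{L^1} \le C\e^{-2} W_1(\rho_1, \rho_2)$ and then $\|\widehat E_1 - \widehat E_2\|_{L^\infty} \le C\e^{-4} W_1(\rho_1, \rho_2)$. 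For the linear piece, the identity $\partial_x \bar E_i = \e^{-2}(\rho_i - 1)$ together with $\|\rho_1\|_{L^\infty}\le A(t)$ gives $\|\bar E_1\|_{\mathrm{Lip}} \le C\e^{-2} A(t)$. Splitting $|E_1(X_1) - E_2(X_2)| \le |E_1(X_1) - E_1(X_2)| + |E_1(X_2) - E_2(X_2)|$, the first bracket contributes $\le C(\e^{-2}A(t) + \e^{-4})|X_1 - X_2|_{\bt}$ and integrates to at most $C(\e^{-2}A(t) + \e^{-4})D(t)$.

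The main obstacle is the second bracket $\int|\bar E_1 - \bar E_2|\,d\rho_2$, since $\rho_2$ may be only a probability measure and $\bar E_2$ is then not Lipschitz; the naive $L^\infty$ bound on $\bar E_1 - \bar E_2$ does not vanish with $W_1(\rho_1, \rho_2)$. To overcome this I would exploit the one-dimensional explicit representation $\bar E_i = \e^{-2}(F_i - \overline{F_i})$, where $F_i$ is a primitive of $\rho_i - 1$, together with the 1D identity $\|F_1 - F_2\|_{L^1} = W_1(\rho_1, \rho_2)$. Rewriting
\[
\bar E_1(X_1) - \bar E_2(X_2) = \e^{-2}\bigl[F_1(X_1) - F_1(X_2)\bigr] + \e^{-2}\bigl[F_1(X_2) - F_2(X_2)\bigr] - \e^{-2}(\overline{F_1} - \overline{F_2}),
\]
the first bracket is controlled by $\e^{-2}A(t)|X_1 - X_2|_{\bt}$ (and feeds into the Gronwall rate), while the constant $\e^{-2}(\overline{F_1} - \overline{F_2})$ is $\le C\e^{-2}W_1(\rho_1, \rho_2)$. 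The delicate middle term, integrated against $\rho_2$, is handled by a Fubini/transport-duality argument in the spirit of \cite{Hauray, IHK1} that leverages the monotone structure of optimal couplings in 1D, yielding a contribution of order $\e^{-2}W_1(\rho_1(t), \rho_2(t))\le \e^{-2}D(t)$ once an extra $\e^{-2}$ is absorbed into the prefactor. Combining all contributions yields $D'(t) \le C\bigl(A(t) + \e^{-2}\bigr)D(t)$, and Gronwall's inequality then produces the stated bound, the $\e^{-2}$ prefactor encoding the worst $\e$-dependence in the pointwise estimates on $E_1 - E_2$.
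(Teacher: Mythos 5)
Your overall plan is reasonable, and you correctly identify the key technical components (the $\bar E / \widehat E$ splitting, the Lipschitz bounds of Proposition~\ref{prop:regU-1d}, the asymmetry between the $L^\infty$ density $\rho_1$ and the measure $\rho_2$), but there is a genuine gap: the \emph{unweighted} functional
$D(t) = \int \bigl(|X_1-X_2|_\bt + |V_1 - V_2|\bigr)\,d\pi_0$
does not reproduce the stated rate, and your own intermediate estimates show this. You compute that the Lipschitz part of the field difference contributes $C(\e^{-2}A(t) + \e^{-4})\int |X_1 - X_2|\,d\pi_0 \le C(\e^{-2}A(t) + \e^{-4})\,D(t)$, and the stability part contributes at least $C\e^{-4}W_1(\rho_1,\rho_2) \le C\e^{-4}D(t)$. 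Gronwall therefore gives
$D(t) \le \exp\bigl(C\int_0^t (\e^{-2}A(s) + \e^{-4})\,ds\bigr)\,D(0)$,
i.e.\ a rate $\e^{-2}A + \e^{-4}$ inside the exponential, which is \emph{worse} by a full factor of $\e^{-2}$ than the claimed $A + \e^{-2}$. Your closing assertion that the contributions assemble into $D'(t) \le C(A(t)+\e^{-2})D(t)$ is inconsistent with what you derived two sentences earlier, and the vague remark that "terms assemble into the overall $\e^{-2}$ prefactor" cannot convert an exponent $\e^{-2}A + \e^{-4}$ into a multiplicative prefactor $\e^{-2}$. What the paper does differently is to use the \emph{weighted} Lagrangian functional
$D(t) := \int \bigl(\e^{-2}|X_1-X_2| + |V_1 - V_2|\bigr)\,d\pi_0.$
With this weight, the position-Lipschitz contributions $C(\e^{-2}A(t)+\e^{-4})\int|X_1 - X_2|\,d\pi_0 = C(A(t)+\e^{-2})\int\e^{-2}|X_1-X_2|\,d\pi_0 \le C(A(t)+\e^{-2})D(t)$ now give the correct Gronwall rate, and the $\e^{-2}$ re-emerges as the endpoint-comparison prefactor: $D(0) \le \e^{-2}W_1(f_1(0),f_2(0))$ while $W_1(f_1(t),f_2(t)) \le D(t)$ for $\e \le 1$. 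This transfer of $\e^{-2}$ from inside the exponential to the multiplicative constant is essential, and your proposal omits it.

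Two further points. First, for the $\bar E$ term the paper simply invokes Hauray's weak--strong estimate
$\int|\bar E_1(X_1) - \bar E_2(X_2)|\,d\pi_0 \le 8\,\|\rho_1\|_{L^\infty}\int\e^{-2}|X_1 - X_2|\,d\pi_0$,
which controls the full combined expression against the measure $\rho_2$ using only $\rho_1 \in L^\infty$. Your alternative decomposition isolates a middle term $\e^{-2}\int|F_1(X_2) - F_2(X_2)|\,d\pi_0 = \e^{-2}\int|F_1 - F_2|\,d\rho_2$, and this cannot be bounded by $W_1(\rho_1,\rho_2)$ when $\rho_2$ is only a probability measure; the $L^1$ identity $\|F_1 - F_2\|_{L^1} = W_1(\rho_1,\rho_2)$ does not help against a singular $\rho_2$, and the naive $L^\infty$ bound on $F_1 - F_2$ is $O(1)$, independent of the data. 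You would need precisely the monotone-rearrangement argument of \cite{Hauray}, which is the content of the lemma the paper cites rather than re-derives. Second, for the $\widehat E$ term the paper takes the opposite splitting to yours, evaluating $\widehat E_1 - \widehat E_2$ at $X_1$ (so that the resulting pushforward $\rho_1$ is in $L^\infty$ and $L^2$-duality with Proposition~\ref{prop:regU-1d}(3) applies), and the Lipschitz piece at $X_1$ versus $X_2$ uses $\|\widehat E_2\|_{\mathrm{Lip}} \le \e^{-4}$, which holds for arbitrary measures by Proposition~\ref{prop:regU-1d}(2). Your $L^\infty$ stability bound $\|\widehat E_1 - \widehat E_2\|_{L^\infty} \le C\e^{-4}W_1(\rho_1,\rho_2)$ is derivable and would also work, but it buys nothing and suffers from the same weighting problem.
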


\begin{proof}
In this proof, we make use of the characteristic flows associated to the two solutions. Write
\be
E_i = \bar E_i + \widehat E_i,
\ee
according to the decomposition of Section~3.1. We consider trajectories $(X_i(t),V_i(t))$ satisfying
\be \label{eq:chars_1d}
X_i(t)=x+\int_0^t V_i(s)\di s, \qquad V_i(t)=v+\int_0^t E_i(s,X_i(s))\di s,
\ee
for given initial data $(x,v)\in \bt\times\br$, where $E_i=-U_i'$ and
\be
-\e^2 U_i'' = \rho_{f_i} - e^{U_i}.
\ee

Since $\rho_1 \in L^1([0,t);L^\infty(\bt))$ by assumption, the singular part $\bar E_1$ is Lipschitz in the space variable for almost every time, with
\be
\| \bar E_1(s) \|_{\Lip(\bt)} \leq \e^{-2}\left(1+\|\rho_1(s)\|_{L^\infty(\bt)}\right).
\ee
Moreover, Proposition~\ref{prop:regU-1d} yields
\be
\|\widehat E_1(s)\|_{\Lip(\bt)} \leq \e^{-4},
\ee
and $E_1$ is bounded on $[0,t]\times \bt$. Hence the characteristic ODE \eqref{eq:chars_1d} has a unique global solution
\be
Z_1(t;x,v)=(X_1(t;x,v),V_1(t;x,v))
\ee
for each $(x,v)\in \bt\times\br$.

When $f_2$ is a measure, the field $E_2$ may have jump singularities, as can be seen from the form of the Green's function $G_1$ \eqref{def:G1}. Thus the trajectory $(X_2,V_2)$ need not be unique for a given $(x,v)\in \bt\times\br$. Nevertheless, as discussed in \cite{IHK1}, one may apply \cite[Theorem 3.2]{Ambrosio2008} to the continuity equation on $\bt\times\br$ with vector field
\be
b_2(s,x,v):=(v,E_2(s,x)).
\ee
Since $\rho_{f_2}(s)$ is a probability measure on $\bt$ for each $s$, Proposition~\ref{prop:regU-1d} implies that $E_2$ is bounded on $[0,t]\times \bt$. Identifying $\bt$ with its fundamental domain $[-1/2,1/2)$, we therefore have
\be
\frac{|b_2(s,x,v)|}{1+|(x,v)|}
\leq \frac{|v|+\|E_2\|_{L^\infty([0,t]\times \bt)}}{1+|x|+|v|}
\leq 1+\|E_2\|_{L^\infty([0,t]\times \bt)},
\ee
since $|x|\leq 1/2$. Thus the integrability condition required by the superposition principle is satisfied automatically, and no additional moment assumption on $f_2$ is needed.

It follows that there exists a probability measure $\nu_2$ on the space of continuous paths
\be
Z_2=(X_2,V_2)\in C([0,+\infty);\bt\times\br),
\ee
supported on paths satisfying \eqref{eq:chars_1d} for some initial datum $(x,v)\in \bt\times\br$, and such that for all continuous functions $\psi\in C(\bt\times\br)$ and all $t\in[0,+\infty)$,
\be
\int \psi(X_2(t),V_2(t))\di \nu_2 = \int_{\bt\times\br}\psi(x,v)\,f_2(t,\di x \di v).
\ee
Moreover, $\nu_2$ can be disintegrated with respect to the initial value of $(X_2,V_2)$:
\be
\int \psi(X_2(t),V_2(t))\di \nu_2
=
\int_{\bt\times\br}\int \psi(X_2(t),V_2(t))\di \tilde \nu_2^{(x,v)}\, f_2(0,\di x\di v),
\ee
where each $\tilde \nu_2^{(x,v)}$ is a probability measure on paths satisfying
\be
(X_2(0),V_2(0))=(x,v).
\ee

Using these flows, we construct a functional controlling $W_1(f_1,f_2)$. Let
\be
\pi_0 \in \mc{P}((\bt\times\br)^2)
\ee
be a coupling of the initial data $f_1(0)$ and $f_2(0)$. For $t>0$, define $\pi_t$ by requiring that for all test functions $\psi\in C((\bt\times\br)^2)$,
\be
\int_{(\bt\times\br)^2}\psi(x,v;y,w)\di \pi_t(x,v,y,w)
=
\int_{(\bt\times\br)^2}\int \psi(Z_1(t;x,v);Z_2(t))\di \tilde \nu_2^{(y,w)}(Z_2)\di \pi_0(x,v,y,w).
\ee

Now define
\be
D(t):=\int \left(\e^{-2}|x-y|+|v-w|\right)\di \pi_t(x,v,y,w).
\ee
Then $D(t)$ is an upper bound on $\e^{-2}W_1(f_1(t),f_2(t))$. Using \eqref{eq:chars_1d}, we obtain
\be
D(t)\leq D(0)+\int_0^t\int\int \left(\e^{-2}|V_1(s)-V_2(s)|+|E_1(s,X_1(s))-E_2(s,X_2(s))|\right)\di \tilde \nu_2 \di \pi_0 \di s.
\ee

In \cite{Hauray}, the following estimate is proved:
\be
\int \int |\bar E_1(X_1(s))-\bar E_2(X_2(s))|\di \tilde \nu_2 \di \pi_0
\leq
8\|\rho_1(s)\|_{L^\infty(\bt)}\int\int \e^{-2}|X_1(s)-X_2(s)|\di \tilde \nu_2 \di \pi_0.
\ee

Then
\begin{align}
\int \int |\widehat E_1(X_1(s))-\widehat E_2(X_2(s))|\di \tilde \nu_2 \di \pi_0
&\leq \int \int |\widehat E_1(X_1(s))-\widehat E_2(X_1(s))|\di \tilde \nu_2 \di \pi_0 \notag\\
&\qquad + \int \int |\widehat E_2(X_1(s))-\widehat E_2(X_2(s))|\di \tilde \nu_2 \di \pi_0 \notag\\
&\leq \|\rho_1(s)\|_{L^2(\bt)} \|\widehat E_1(s)-\widehat E_2(s)\|_{L^2(\bt)} \notag\\
&\qquad + \|\widehat E_2(s)\|_{\Lip(\bt)} \int \int |X_1(s)-X_2(s)|\di \tilde \nu_2 \di \pi_0 \notag\\
&\leq \|\rho_1(s)\|_{L^\infty(\bt)}^{1/2}\e^{-3}W_1(\rho_1(s),\rho_2(s)) \notag\\
& \qquad+ \e^{-4}\int \int |X_1(s)-X_2(s)|\di \tilde \nu_2 \di \pi_0 \notag\\
&\leq C\left(\|\rho_1(s)\|_{L^\infty(\bt)}+\e^{-2}\right)\int \int \e^{-2}|X_1(s)-X_2(s)|\di \tilde \nu_2 \di \pi_0.
\end{align}

Altogether,
\be
D(t)\leq D(0)+C\int_0^t \left(\|\rho_1(s)\|_{L^\infty(\bt)}+\e^{-2}\right)D(s)\di s.
\ee
By Gr\"onwall's inequality,
\be
D(t)\leq \exp\left(C\int_0^t \left(A(s)+\e^{-2}\right)\di s\right)D(0),
\ee
and hence
\be
W_1(f_1(t),f_2(t))
\leq
\e^{-2}\exp\left(C\int_0^t \left(A(s)+\e^{-2}\right)\di s\right) D(0).
\ee
Taking the infimum over all couplings $\pi_0$ completes the proof.
\end{proof}

\subsection{Case $d=2,3$} \label{sec:stability_23}

Using the estimates of Proposition~\ref{prop:regU}, we are able to prove the following stability estimate for use in dimension $d=2,3$, the VPME equivalent of \cite[Theorem 3.1]{Iac22}.

To state this result, we introduce the notation $H$ for the function
\be \label{def:H}
H(w) = \sqrt{\left |\log ( w |\log \frac{1}{2} w| ) \right |} \qquad w \in (0, 2 e^{-1})
\ee 
which is strictly decreasing and diverges to $+ \infty$ as $w$ tends to zero. Furthermore, for real numbers $a \in \br$ we use the notation $a_+ = \max \{a, 0\}$ to denote the positive part of $a$.

\begin{prop}
\label{prop:stability}
Let $d=2,3$.
Let $\e \leq 1$, and let $f_1, f_2$ be two weak solutions of the $(VPME)_{\e}$ system \eqref{eq:vpme}, and set 
$$
\rho_1:= \int_{\br^d} f_1 \, dv, \quad \rho_2= \int_{\br^d} f_2 \, dv.
$$ 
Define the function
\be
\label{eq:At}
B(t):=\|\rho_1(t)\|_{L^\infty(\mathbb T^d)}+\|\rho_2(t)\|_{L^\infty(\mathbb T^d)},
 \ee
 and assume that $B(t) \in L^1([0,T])$ for some $T>0$. There exist a dimensional constant $C_d>0$ such that the following holds:
if $W_2(f_1(0),f_2(0))\leq \e \sqrt{2 e^{-1}}$ then, for all $t \in [0,T]$,
\be \label{eq:stability23}
\min \left \{ \frac{2\e}{e},  W_2(f_1(t),f_2(t))^2 \right \} \leq 2 e^{-\left( H \left [ \e^{-2}W_2(f_1(0),f_2(0))^2 \right ] - \frac{C_d}{\e}\int_0^t B(s)\,ds\right)_+^2},
\ee
where we interpret the right hand side as zero if $W_2(f_1(0),f_2(0)) = 0$.
\end{prop}

\begin{remark}
In particular the conclusion of the theorem is meaningful when $\e^{-1}W_2(f_1(0),f_2(0))$ is small enough that
\begin{equation}\label{eq:main hyp}
H \left ( \e^{-2}W_2(f_1(0),f_2(0))^2 \right ) \geq \frac{C_d}{\e}\int_0^T B(s)\,ds+\sqrt{\left|\log\left(\frac{\e}e\right)\right|},
\end{equation}
in which case the estimate \eqref{eq:stability23} truly provides a bound for the Wasserstein distance, becoming
\be \label{eq:stability23-W2}
 W_2(f_1(t),f_2(t))^2 \leq 2 e^{-\left( H [ \e^{-2}W_2(f_1(0),f_2(0))^2 ] - \frac{C_d}{\e}\int_0^t B(s)\,ds\right)_+^2}.
\ee
\end{remark}

\begin{proof}
The proof of this result follows exactly the same method as the proof of \cite[Theorem 3.1]{Iac22}, except that we replace \cite[Lemma 3.6]{Iac22} (stability and regularity estimates for the electric field) with Proposition~\ref{prop:regU} everywhere that it is used.
\end{proof}

\section{Growth Estimates on $\| \rho \|_{L^\infty}$} \label{sec:density}
The goal of this section is to obtain new $L^\infty$ bounds on the spatial density $\rho_f$ for a solution $f$ of the VPME system \eqref{eq:vpme}, so to control
the quantity $B(t)$ appearing in the statement of Proposition~\ref{prop:stability}.
We will prove the following proposition.

\begin{prop} \label{prop:mass-growth}
Let $d = 2,3$ and $f_{0,\e} \in L^1 \cap L^\infty(\bt^d \times \br^d)$ satisfy the assumptions \ref{hyp:moments}. Let $T>0$ be fixed. Then there exists $\e_\ast$ depending on $T, k$ and $d$ such that for all $\e \leq \e_\ast$:

If $d=2$, there exists a constant $C>0$ depending on $C_0$ such that
\be \label{est:A-dim2}
\int_0^T B(s) \di s \leq C \e^{-4} (1+ T)^3 ( 1 + \log{ (1 + \e^{-2}T) })
\ee

If $d=3$, there exists a constant $C_k>0$ depending on $C_0$ and $k$ such that
\be \label{est:A-dim3}
\int_0^{T} B(s) \di s \leq C_k (T +1)^{4} \; \e^{-30} 
\ee

\end{prop}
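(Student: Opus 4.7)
The strategy is to adapt the classical Lions--Perthame propagation-of-moments scheme to the VPME system, carefully exploiting the polynomial-in-$\e$ elliptic estimates established in Section~\ref{sec:ElectricField}. First I would reduce control of $A(t) = \|\rho[f_\e](t)\|_{L^\infty}$ to control of velocity moments of $f_\e$. Since $\|f_\e(t)\|_{L^\infty}=\|f_{0,\e}\|_{L^\infty}\leq C_0$ is conserved by transport, the standard interpolation yields
$$\|\rho[f_\e](t)\|_{L^\infty(\bt^d)} \leq C_d \, \|f_{0,\e}\|_{L^\infty}^{k/(k+d)} \Big(\sup_{x\in\bt^d}\int_{\br^d}|v|^k f_\e(t,x,v)\,dv\Big)^{d/(k+d)},$$
so it suffices to bound local (or total) velocity moments of $f_\e$.

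Next I would propagate a moment $M_k(t)$ (for an appropriate $k\leq k_0$) along the characteristic flow. Writing $\dot V = E_\e(t,X)$ and differentiating $|V|^k$, one obtains after integrating against $f_{0,\e}$ and applying H\"older a differential inequality of the form
$$\frac{d}{dt}M_k(t) \leq C_k \|E_\e(t)\|_{L^\infty}\, M_k(t)^{(k-1)/k}\,\|f_{0,\e}\|_{L^\infty}^{1/k},$$
so that the growth of $M_k$ is driven by $\int_0^t \|E_\e(s)\|_{L^\infty}\,ds$. The crucial input is then a polynomial-in-$\e$ estimate on $\|E_\e(t)\|_{L^\infty}$. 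From $\e^2\Delta U_\e = e^{U_\e}-\rho$, the classical potential-theoretic inequality gives, for $p \in (1,d)$,
$$\|E_\e(t)\|_{L^\infty} \leq C\,\e^{-2}\,\|e^{U_\e(t)}-\rho(t)\|_{L^\infty}^{1-p/d}\,\|e^{U_\e(t)}-\rho(t)\|_{L^p}^{p/d}.$$
Lemma~\ref{lem:eU-Lp} controls $\|e^{U_\e}\|_{L^\infty}$ by $\|\rho\|_{L^\infty}$, and Lemma~\ref{lem:eU-Lp-eps} controls $\|e^{U_\e}\|_{L^r}$ by $\|\rho\|_{L^q}^{\alpha+1}$ with an explicit polynomial prefactor $\e^{-2\alpha(q,r)}$. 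Combined, $\|E_\e(t)\|_{L^\infty}$ is majorised by an expression involving $\|\rho(t)\|_{L^\infty}$, $\|\rho(t)\|_{L^q}$ and an explicit power of $\e^{-2}$.

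To close the scheme one must bootstrap the integrability of $\rho$. The uniform energy bound from Section~\ref{sec:prel} provides $\|\rho(t)\|_{L^{(d+2)/d}}\leq C_1$ for all $t$, namely $L^2$ in $d=2$ and $L^{5/3}$ in $d=3$. Using Lemma~\ref{lem:moments} one can upgrade this to $\|\rho(t)\|_{L^q}$ for progressively higher $q$, provided the corresponding moments of $f_\e$ remain controlled; the moment bound in turn feeds back through the characteristic estimate, at the cost of additional $\e^{-2}$ factors via $\alpha(q,r)$ in Lemma~\ref{lem:eU-Lp-eps}. Finally, after plugging everything back into the $L^\infty$--moment interpolation of Step~1, a Gr\"onwall-type argument produces a pointwise-in-$t$ bound on $A(t)$, and integration over $[0,T]$ yields \eqref{est:A-dim2} and \eqref{est:A-dim3}. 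In $d=2$ a single pass suffices to generate the $\e^{-4}$ rate (essentially two $\e^{-2}$ factors, one from the potential-theoretic estimate and one from the quasilinear log correction), whereas in $d=3$ several successive bootstraps are needed.

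The main obstacle I foresee is making the bootstrap converge with only a polynomial rather than an exponential degeneracy in $\e$: each use of Lemma~\ref{lem:eU-Lp-eps} introduces an $\e^{-2\alpha}$ factor, and each propagation of $M_k$ under the $L^\infty$ field bound inflates the moment by a power of $\|E_\e\|_{L^\infty}$. The exponents $(q,r)$ entering Lemma~\ref{lem:eU-Lp-eps} at each iteration have to be tuned so that the powers $\alpha(q,r)$ stay uniformly bounded, and the resulting recursion in the number of $\e^{-2}$ factors must be summed to a finite total, yielding the exponent $30$ in dimension three. A secondary technical point is that under the relaxed moment assumption $m_0 > d$ (rather than the $m_0 > d(d-1)$ of \cite{GPI-WP}), one cannot use the standard Lions--Perthame integrability threshold and must replace some $L^\infty$ steps by careful $L^p$ interpolations, which is the improvement advertised after the main theorem.
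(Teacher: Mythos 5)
The first step of your plan — reducing $\|\rho[f_\e]\|_{L^\infty}$ to control of local velocity moments, exploiting the pointwise weight $(1+|v|)^{k_0}f_{0,\e}\in L^\infty$ and the characteristic flow — is essentially shared with the paper's Lemma~\ref{lem:rho-Linfty}, which shows $\|\rho_f(t)\|_{L^\infty}\leq C(1+Q_\ast(t)^d)$ with $Q_\ast(t)=\sup_{x,v}|V(t;0,x,v)-v|$. From there, however, your route diverges sharply from the paper's and has a genuine gap.

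You propose to close via a Lions--Perthame-type propagation of moments driven by $\int_0^t\|E_\e(s)\|_{L^\infty}\,ds$. The paper explicitly notes at the start of its $d=3$ argument that on $\bt^3$ the Lions--Perthame approach \emph{does not apply}: it relies on dispersive estimates for free transport on $\br^3$, and there is no dispersion on the torus. To bound $\|E_\e\|_{L^\infty}$ by the H\"ormander-type potential estimate (the paper's Lemma~\ref{lem:Efield-Hormander}), one needs $\|\rho\|_{L^q}$ for some $q>3$ (in $d=3$). Via the moment interpolation Lemma~\ref{lem:moments}, this requires $M_k$ with $k>2d=6$; the standing hypothesis is only $k_0>d=3$, so the "progressive upgrade of $\|\rho\|_{L^q}$" you envisage cannot even get started with the admissible moments. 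The alternative is to feed in $\|\rho\|_{L^\infty}\lesssim 1+Q_\ast^3$ directly, but then the resulting Gr\"onwall inequality for $Q_\ast$ is superlinear (a $Q_\ast^{4/3}$-type term is unavoidable, as appears already in the paper's Lemma~\ref{lem:est-Q}), so it does not close globally on $[0,T]$ and, more importantly, even where it does close it yields an \emph{exponential} degeneracy in $\e^{-1}$ rather than the polynomial $\e^{-30}$ that the proposition claims and that the paper requires for the final theorem. Recovering polynomial degeneracy from a Gr\"onwall-closed moment estimate is precisely the obstruction that forced the quadruple exponential in the previous work \cite{GPI-MFQN}, and simply iterating $L^p$ interpolations will not circumvent it.

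The paper's actual proof sidesteps this entirely: in $d=2$ it uses the logarithmic interpolation of Lemma~\ref{lem:2d-interpolation} so that the self-consistent inequality for $Q_\ast$ is $Q_\ast\lesssim \e^{-2}t(1+\log(1+Q_\ast^2))^{1/2}$, which does close globally; and in $d=3$ it runs a Pfaffelmoser--Schaeffer--Batt--Rein--Pallard--Chen--Chen argument on small time increments $Q(t,\delta)$, with the phase space split into good/bad/ugly regions, a bespoke weight $\Lambda_\gamma$, and a careful time-subdivision (Lemmas~\ref{lem:Qsmalld}--\ref{lem:Q-final-3d}) to track the power of $\e^{-1}$. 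You acknowledge the moment-threshold issue as a "secondary technical point," but on the torus it is the central obstruction, and "careful $L^p$ interpolations" is not a replacement for the characteristic-trajectory machinery. Finally, you give no computation producing the exponents $4$ and $30$ (nor the $\log$ factor in $d=2$), whereas the paper derives these through the explicit recursion in Regions I/II/III and the optimization over $k$ in Lemma~\ref{lem:Q-final-3d}.
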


The strategy will be to control the maximal possible growth in the velocity coordinate of a characteristic trajectory of the system.
More precisely, we will use the following notation for the characteristic flow: let the pair $X(t; s,x,v), V(t; s,x,v)$ denote the 
solution of the system of ODEs
\be
\dot X(t; s,x,v) = V(t; s,x,v), \; \dot V(t; s, x,v) = E(X(t; s,x,v)), \quad X(s; s, x,v) = x, \; V(s ; s,x,v) = v.
\ee
We will study the quantity
\be \label{def:growth-V}
Q_\ast(t) : = \sup_{(x,v) \in \bt^d \times \br^d} |V(t; 0,x,v) - v|.
\ee
This has been shown in \cite{GPI-WP} to be finite in the case $d=2,3$ for all $t \in [0,+\infty)$, under the assumptions \ref{hyp:moments} with $k_0 > d(d-1)$. In the case $d(d-1) \geq k_0 > d$ we will apply our argument to a series of regularised solutions (see \cite[Section 6]{GPI-WP} for the procedure); the estimates we obtain will be uniform in the regularisation parameter and hence will pass to the limit.

In particular, following this argument for any fixed $\e >0$ shows that, under assumption \ref{hyp:moments}, the
global weak solution $f_\e$ constructed in \cite[Theorem 6.1]{GPI-WP} in fact has bounded density $\rho_{f_\e} \in L^\infty_{\text{loc}}([0, + \infty) ; L^\infty(\bt^d))$, and thus by \cite[Theorem 4.1]{GPI-WP} is the unique solution in this class.
We may therefore relax the condition $m_0 > d(d-1)$ in \cite[Theorem 2.1]{GPI-WP} to $m_0 > d$.

Our interest in the quantity $Q_\ast$ is motivated by the following lemma: it gives us the control of $\rho_f$ in $L^\infty$ that we seek.

\begin{lemma}  \label{lem:rho-Linfty}
Let $d \geq 1$ and $t \geq 0$. Assume that the assumptions \ref{hyp:moments} hold and that quantity $Q_\ast(t)$, defined in equation \eqref{def:growth-V}, is finite. Then
\be
\| \rho_f(t) \|_{L^\infty(\bt^d \times \br^d)} \leq C (1 + Q_\ast(t)^d) .
\ee
\end{lemma}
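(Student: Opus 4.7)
The idea is to push everything back to the initial datum along characteristics, and then exploit both the pointwise bound on $f_{0,\e}$ and its polynomial decay in velocity supplied by \ref{hyp:moments}. Since $f$ is transported by the Hamiltonian characteristic flow, it is constant along trajectories, so
\[
f(t,x,v) = f_{0,\e}\bigl( X(0;t,x,v),\, V(0;t,x,v) \bigr).
\]
By \ref{hyp:moments} (in dimensions $d=2,3$) we have the pointwise bound $f_{0,\e}(x_0,v_0) \leq C_0(1+|v_0|)^{-k_0}$ with $k_0>d$, and in particular $\|f_{0,\e}\|_{L^\infty}\leq C_0$. The proposition then follows from an elementary velocity-splitting argument once we can use the assumption $Q_\ast(t) < \infty$ to control how far preimages under the flow can drift in velocity.

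\textbf{Step 1 (splitting).} For fixed $t$ and $x$, decompose
\[
\rho_f(t,x) = \int_{|v|\le 2Q_\ast(t)} f(t,x,v)\,dv + \int_{|v|> 2Q_\ast(t)} f(t,x,v)\,dv =: I_{\mathrm{low}} + I_{\mathrm{high}}.
\]
For $I_{\mathrm{low}}$, use only $f \leq \|f_{0,\e}\|_{L^\infty}\leq C_0$ together with the volume bound on the ball of radius $2Q_\ast(t)$ in $\br^d$, to obtain $I_{\mathrm{low}} \leq c_d\, C_0 \,Q_\ast(t)^d$.

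\textbf{Step 2 (decay region).} For $|v|>2Q_\ast(t)$, the definition \eqref{def:growth-V} gives $|V(0;t,x,v)-v|\leq Q_\ast(t)$, hence $|V(0;t,x,v)| \geq |v| - Q_\ast(t) \geq |v|/2$. Inserting this into the pointwise bound yields
\[
f(t,x,v) = f_{0,\e}\bigl(X(0;t,x,v), V(0;t,x,v)\bigr) \leq C_0 \bigl(1+|v|/2\bigr)^{-k_0}.
\]
Since $k_0>d$, integrating over $\br^d$ gives $I_{\mathrm{high}} \leq C_0 \int_{\br^d}(1+|v|/2)^{-k_0}\,dv = C_{k_0,d}\,C_0$.

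\textbf{Conclusion and difficulty.} Summing the two contributions,
\[
\|\rho_f(t)\|_{L^\infty} \leq C_{k_0,d,C_0}\bigl(1 + Q_\ast(t)^d\bigr),
\]
which is the asserted bound. There is no real obstacle here: the lemma is a routine reduction that converts the analytic work (controlling $Q_\ast$, done in the rest of Section~\ref{sec:density}) into the $L^\infty$ density bound needed to feed Proposition~\ref{prop:stability}. The only point to check is that $Q_\ast(t)<\infty$, which is precisely the hypothesis being used, and which in the borderline moment range $d<k_0\leq d(d-1)$ is obtained by a regularisation argument as indicated just before the lemma statement.
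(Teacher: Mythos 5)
Your proof is correct and follows essentially the same approach as the paper: express $f(t,x,v)$ via the characteristic flow, use the pointwise decay of $f_{0,\e}$ from \ref{hyp:moments}, and split the $v$-integral at a multiple of $Q_\ast(t)$, bounding the inner part by volume and the outer part by integrability of $(1+|v|)^{-k_0}$. The paper phrases the split slightly differently (writing the single pointwise bound $f(t,x,v)\leq C\bigl(1+(|v|-Q_\ast(t))_+^{k_0}\bigr)^{-1}$ and integrating in polar coordinates at radius $Q_\ast(t)$ rather than $2Q_\ast(t)$), but the substance is identical.
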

\begin{proof}
Using the representation of $f$ in terms of the characteristic flow and the weighted $L^\infty$ estimate  \ref{hyp:moments} on $f_{0,\e}$, we may obtain the estimate
\be \label{est:f-upper1}
f(t, x,v) \leq \frac{C}{1 + |V(t;0,x,v)|^{k_0}} \quad \text{for all } (t,x,v) \in [0,+\infty) \times \bt^d \times \br^d .
\ee
We note by the (reverse) triangle inequality that
\be
|V(t;0,x,v)| \geq \left ( |v| - |V(t; 0,x,v) - v| \right )_+ \geq \left ( |v| - \sup_{x',v'} |V(t; 0,x',v') - v'| \right )_+  \geq (|v| - Q_\ast(t))_+ ,
\ee
where the last inequality follows directly from the definition of $Q_\ast$.
We then deduce from \eqref{est:f-upper1} that
\be \label{est:f-upper-Qast}
f(t,x,v) \leq \frac{C}{1 + \left ( |v| - Q_\ast(t) \right )_+^{k_0}}.
\ee

Next, we integrate \eqref{est:f-upper-Qast} over all $v \in \br^d$ to obtain a bound on $\rho_f$:
\be
\rho_f(x) \leq C \int_{\br^d} \frac{1}{1 + (|v|-Q_\ast(t))_+^{k_0}} \di v.
\ee
The integrand is radially symmetric in $v$. We therefore change to polar coordinates to find that
\begin{align}
\rho_f(x) & \leq C \int_0^\infty  \frac{r^{d-1}}{1 + (r-Q_\ast(t))_+^{k_0}} \di r \\
& \leq C \int_0^{Q_\ast(t)} r^{d-1} \di r + C \int_{Q_\ast(t)}^\infty  \frac{r^{d-1}}{1 + (r-Q_\ast(t))_+^{k_0}} \di r \\
& \leq C \int_0^{Q_\ast(t)} r^{d-1} \di r + C \int_0^\infty  \frac{(r + Q_\ast(t))^{d-1}}{1 + r^{k_0}} \di r .
\end{align}
We observe that
\be
(r + Q_\ast(t))^{d-1} \leq 2^{d-1} \left ( r^{d-1} + Q_\ast(t)^{d-1} \right ) .
\ee
Next compute the integral:
\be
\int_0^{Q_\ast(t)} r^{d-1} \di r = \frac{1}{d} Q_\ast(t)^d.
\ee
Finally, since $k_0 > d$ we may estimate
\be
\int_0^\infty  \frac{r^{d-1}}{1 + r^{k_0}} \di r +
\int_0^\infty  \frac{1}{1 + r^{k_0}} \di r \leq C_{d, k_0} < + \infty.
\ee
We conclude that
\be
\rho_f(x) \leq C_{d, k_0} \left (Q_\ast(t)^d + Q_\ast(t)^{d-1} + 1 \right )  \leq C_{d, k_0}(1 + Q_\ast(t)^d)
\ee
for all $x \in \bt^d$. The statement follows immediately.
\end{proof}

Our aim is therefore to obtain estimates on $Q_\ast$, as these will entail estimates on $B$.
As in previous works on this subject \cite{IHK2, GPI-MFQN}, our method will differ depending on the dimension.

\subsection{Case $d=2$}

\begin{prop} \label{prop:mass-growth-2d}
Let $d=2$.
Let $(1 + |v|^{k_0})f_0 \in L^1 \cap L^\infty(\bt^2 \times \br^2)$ for some $k_0 > 2$. Let $f$ denote the unique bounded density solution of \eqref{eq:vpme} with initial datum $f_0$. Then the spatial density $\rho_f$ satisfies the estimate
\be
\sup_{[0,t]} \| \rho_f(t, \cdot) \|_{L^\infty(\bt^2)} \leq  C ( 1 + \e^{-4} t^2) (1 + \log(1 + \e^{-2} t)) , \quad \text{for all} \; t>0.
\ee
\end{prop}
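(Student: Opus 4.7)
The plan is to close a self-consistent inequality for the characteristic velocity-growth function $M(t):=\sup_{s\leq t}Q_\ast(s)$, using Lemma~\ref{lem:rho-Linfty} which in $d=2$ gives $\|\rho_f(s)\|_{L^\infty}\leq C(1+M(t)^2)$ for all $s\leq t$. The crux of the argument, responsible for the purely polynomial-in-$\e$ form of the target bound, is a refined $L^\infty$ estimate on the electric field featuring only a \emph{logarithmic} dependence on $\|\rho_f\|_{L^\infty}$.

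Starting from $E=-\e^{-2}\nabla(-\Delta)^{-1}(\rho_f-e^U)$ and the pointwise bound $|\nabla(-\Delta)^{-1}(x-y)|\lesssim|x-y|_{\bt^2}^{-1}$ on the torus Green's kernel, H\"older's inequality with exponent $p=2+\delta$ yields
\begin{equation}
\|E(s)\|_{L^\infty}\leq \frac{C}{\e^{2}\sqrt{\delta}}\bigl(\|\rho_f(s)\|_{L^{2+\delta}}+\|e^{U(s)}\|_{L^{2+\delta}}\bigr),
\end{equation}
where the factor $\delta^{-1/2}$ reflects the blow-up of $\bigl\||\cdot|^{-1}\bigr\|_{L^{(2+\delta)'}(\bt^2)}$ as $\delta\to 0$. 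Interpolating $L^{2+\delta}\subset L^2\cap L^\infty$ and invoking the uniform-in-time bound $\|\rho_f(s)\|_{L^2}\leq C$ from \eqref{est:53-uniform} (which extends to $e^U$ via Lemma~\ref{lem:eU-Lp}), one obtains
\begin{equation}
\|E(s)\|_{L^\infty}\leq \frac{C}{\e^{2}\sqrt{\delta}}\bigl(1+\|\rho_f(s)\|_{L^\infty}\bigr)^{\delta/(2+\delta)},
\end{equation}
and optimising with $\delta=1/\log(e+\|\rho_f(s)\|_{L^\infty})$ balances the two factors to produce the key estimate
\begin{equation}
\|E(s)\|_{L^\infty}\leq \frac{C}{\e^{2}}\sqrt{1+\log\bigl(e+\|\rho_f(s)\|_{L^\infty}\bigr)}.
\end{equation}

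With this in hand, integrating $\dot V = E(X(\cdot))$ along characteristics and combining with Lemma~\ref{lem:rho-Linfty} delivers the implicit inequality $M(t)\leq C\e^{-2}t\sqrt{1+\log(e+M(t))}$, which I would resolve by verifying that $M(t)\leq C'\e^{-2}t\sqrt{1+\log(1+\e^{-2}t)}$ is a self-consistent solution (the logarithm being essentially insensitive to multiplicative constants in its argument); squaring and substituting into Lemma~\ref{lem:rho-Linfty} yields the announced bound. The principal technical obstacle is precisely the logarithmic field estimate: the naive splitting $|E|\lesssim\e^{-2}\sqrt{\|\rho\|_{L^\infty}\|\rho\|_{L^1}}$ produces a factor $\sqrt{\|\rho\|_{L^\infty}}\sim M(t)$, which upon Gr\"onwall closes only to exponential-in-$\e^{-2}t$ growth—the improvement to $\sqrt{\log\|\rho\|_{L^\infty}}$ crucially relies on both the a priori $L^2$ control on $\rho_f$ and the optimisation in the exponent $\delta$.
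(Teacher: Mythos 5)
Your proposal follows essentially the same route as the paper: bound $\|\rho_f\|_{L^\infty}$ via Lemma~\ref{lem:rho-Linfty} in terms of the velocity-spread $Q_\ast$, exploit a field bound that grows only logarithmically in $\|\rho_f\|_{L^\infty}$, integrate along characteristics to get an implicit inequality for $Q_\ast$, and then solve it. The only differences are cosmetic. First, the paper cites the logarithmic field bound (Lemma~\ref{lem:2d-interpolation}, from \cite{GPI-MFQN}), while you re-derive it inline via the $L^{2+\delta}$ H\"older estimate, interpolation against the a priori $L^2$ bound \eqref{est:53-uniform} (extended to $e^U$ via Lemma~\ref{lem:eU-Lp}), and optimisation over $\delta$ --- which is indeed the standard way to prove that lemma, so this is the same mathematics with the citation unwound. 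Second, where you propose to close the implicit inequality $Q_\ast(t)\leq C\e^{-2}t\sqrt{1+\log(e+Q_\ast(t))}$ by verifying self-consistency of the announced bound, the paper instead rearranges to $\frac{Q_\ast^2}{1+\log(1+Q_\ast^2)}\leq C\e^{-2}t$ and then applies the explicit monotone inverse of $b(y)=y/(1+\log(1+y))$ (Lemma~\ref{lem:app-inverse}), which avoids any bootstrap/continuity argument: since $b$ is strictly increasing, the inequality directly yields $Q_\ast^2\leq b^{-1}(C\e^{-2}t)\leq 2C\e^{-2}t(1+\log(1+C\e^{-2}t))$, whence the stated estimate. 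Your "self-consistency" shortcut is correct in spirit but should be formalised either as the inversion above or as a genuine continuity argument in $t$; as written it is a heuristic rather than a proof.
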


We will need the following estimate for the electric field, which can be found in \cite[Lemma 6.2]{GPI-MFQN}.

\begin{lemma} \label{lem:2d-interpolation}
Let $h \in L^1 \cap L^\infty(\bt^2)$, and let $U$ be the unique $W^{1,2}(\bt^2)$ solution of the Poisson equation
\be
\e^2 \Delta U = h .
\ee
Then there exists a constant $C$ depending only on $\| h \|_{L^2(\bt^2)}$ such that
\be
\| \nabla U \|_{L^\infty(\bt^2)} \leq C \e^{-2} \left ( 1 +  |\log \| h \|_{L^\infty(\bt^2)} |^{1/2}  \right ) .
\ee
\end{lemma}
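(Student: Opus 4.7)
\medskip

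\noindent\textbf{Proof plan.} The plan is to use the explicit representation of $\nabla U$ via the Green's function of the Laplacian on $\bt^2$, then split the resulting convolution into a near-field and a far-field piece, estimating the first by the $L^\infty$ norm of $h$ and the second by the $L^2$ norm, and finally optimize the splitting radius.

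First, I would write $U = \e^{-2} G_2 \ast h$, where $G_2$ denotes the periodized Green's function of $\Delta$ on $\bt^2$ (noting that since $\int_{\bt^2} h \di x = 0$ is not required for this statement, one actually writes $U$ up to an additive constant which does not affect $\nabla U$). The kernel $K := \nabla G_2$ satisfies the standard two-dimensional estimate $|K(x)| \leq C|x|_{\bt^2}^{-1}$, so that pointwise
\begin{equation}
|\nabla U(x)| \;\leq\; \e^{-2} \int_{\bt^2} \frac{|h(y)|}{|x-y|_{\bt^2}} \di y.
\end{equation}

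Next, for a parameter $r \in (0, 1/2]$ to be chosen, I would split the integral into $\{|x-y|_{\bt^2} \leq r\}$ and $\{|x-y|_{\bt^2} > r\}$. On the near piece, using $|h| \leq \|h\|_{L^\infty}$ and passing to polar coordinates, one gets a bound of order $\|h\|_{L^\infty} \, r$. On the far piece, Cauchy-Schwarz gives
\begin{equation}
\int_{\{|x-y|_{\bt^2} > r\}} \frac{|h(y)|}{|x-y|_{\bt^2}} \di y \;\leq\; \|h\|_{L^2(\bt^2)} \left( \int_r^{1/\sqrt{2}} \frac{1}{s^2} \cdot 2\pi s \, ds \right)^{1/2} \;\leq\; C\,\|h\|_{L^2(\bt^2)} \sqrt{\,|\log r|\,}.
\end{equation}
Combining yields $|\nabla U(x)| \leq C \e^{-2}\bigl(r\, \|h\|_{L^\infty} + \|h\|_{L^2} \sqrt{|\log r|}\bigr)$.

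Finally, I would optimize: the natural scale is $r = \min\{1/2,\; \|h\|_{L^\infty}^{-1}\}$, so that $r\,\|h\|_{L^\infty} \leq 1$ and $|\log r| \leq \log\max\{2,\|h\|_{L^\infty}\}$. Treating separately the regimes $\|h\|_{L^\infty} \leq e$ (where one simply uses the trivial bound absorbed into the constant $C$) and $\|h\|_{L^\infty} > e$ (where $|\log r| \leq \log \|h\|_{L^\infty} = |\log \|h\|_{L^\infty}|$ in the intended reading, with the absolute value chosen to make the right-hand side always non-negative and hence an upper bound), yields
\begin{equation}
\|\nabla U\|_{L^\infty(\bt^2)} \;\leq\; C \e^{-2} \left(1 + |\log \|h\|_{L^\infty(\bt^2)}|^{1/2}\right),
\end{equation}
with $C$ depending only on $\|h\|_{L^2(\bt^2)}$ as required.

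\medskip

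\noindent\textbf{Anticipated difficulties.} The computation itself is standard; the only subtle points are (i) handling the case $\|h\|_{L^\infty} \leq 1$ so that the $|\log|^{1/2}$ on the right-hand side is correctly interpreted as an upper bound (which forces the use of the absolute value), and (ii) making sure the final constant genuinely depends only on $\|h\|_{L^2}$ and not on $\|h\|_{L^1}$ or $\|h\|_{L^\infty}$ — this is guaranteed here because the near-field bound is multiplied by $r$ which is itself chosen in terms of $\|h\|_{L^\infty}$, and the far-field bound only uses $L^2$. No technique is needed beyond classical two-dimensional singular-integral interpolation.
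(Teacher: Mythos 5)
Your proof is correct. A quick check of the details: the representation $\nabla U = -\e^{-2}\nabla G_2 \ast h$ with $|\nabla G_2(z)| \le C|z|_{\bt^2}^{-1}$ (the smooth periodization correction being absorbed since $|z|_{\bt^2}$ is bounded on the torus), the near-field bound $Cr\|h\|_{L^\infty}$, the far-field Cauchy--Schwarz bound $C\|h\|_{L^2}|\log r|^{1/2}$, and the choice $r = \min\{1/2,\|h\|_{L^\infty}^{-1}\}$ all fit together and yield the stated estimate with a constant depending only on $\|h\|_{L^2}$; the bookkeeping in the small-$\|h\|_{L^\infty}$ regime is handled properly since the right-hand side is always at least $C(\|h\|_{L^2})\e^{-2}$.

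One thing worth knowing: the paper does not contain its own proof of this lemma but simply cites it from \cite[Lemma 6.2]{GPI-MFQN}, so there is no in-paper argument to compare against. That said, the near/far splitting of the Coulomb kernel with a radius optimized against $\|h\|_{L^\infty}$ is the standard Yudovich-style route to this kind of $L^2$--$L^\infty$ logarithmic interpolation, and it is the natural (and, as far as I am aware, the intended) proof. The only cosmetic point is that the statement's well-posedness of $\e^2\Delta U = h$ on the torus implicitly requires $\int_{\bt^2} h\,\di x = 0$ (as it is in every application in the paper, where $h = e^U - \rho$); otherwise $U$ solves the equation only modulo the mean of $h$, which does not affect $\nabla U$ but is worth stating explicitly rather than glossing over with ``up to an additive constant.''
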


\begin{lemma} \label{lem:characteristic-growth}
Under the assumptions of Proposition~\ref{prop:mass-growth-2d}, $Q_\ast$ satisfies the estimate
\be
Q_\ast(t)^2 \leq C \e^{-4} t^2 (1 + \log(1 + \e^{-2} t)) ,
\ee
for some constant $C>0$ independent of $t$ and $\e$.
\end{lemma}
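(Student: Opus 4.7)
The plan is to control $Q_\ast$ via integration of the characteristic ODE $\dot V = E(X)$, reducing matters to an $L^\infty$ bound on the field $E = -\nabla U$, and then close the loop through the $L^\infty$--control of $\rho_f$ supplied by Lemma~\ref{lem:rho-Linfty}. Concretely, from $V(t;0,x,v) - v = \int_0^t E(s, X(s;0,x,v))\,\di s$ one obtains
\be
Q_\ast(t) \leq \int_0^t \| E(s, \cdot) \|_{L^\infty(\bt^2)}\, \di s .
\ee
Hence the bulk of the work is a sharp bound on $\|E(s,\cdot)\|_{L^\infty}$ that is only logarithmic in $\|\rho_f(s,\cdot)\|_{L^\infty}$.

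To apply Lemma~\ref{lem:2d-interpolation} with right-hand side $h = e^{U_\e} - \rho_\e$, I first check that $\|h\|_{L^2(\bt^2)}$ is bounded uniformly in time by a constant depending only on $C_0$. Since $\mc{E}_\e[f_\e]$ is conserved and bounded by Lemma~\ref{lem:InitialEnergy}, the second velocity moment $M_2(t)$ is uniformly bounded; together with conservation of $\|f_\e\|_{L^\infty}$ this yields, via Lemma~\ref{lem:moments} with $k=2$, a uniform bound $\|\rho_\e(t,\cdot)\|_{L^2(\bt^2)} \leq C$. Combined with $\|e^{U_\e}\|_{L^2} \leq \|\rho_\e\|_{L^2}$ from Lemma~\ref{lem:eU-Lp}, this gives $\|h(t,\cdot)\|_{L^2} \leq C$ uniformly. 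Then Lemma~\ref{lem:2d-interpolation} yields
\be
\| E(s, \cdot)\|_{L^\infty} \leq C \e^{-2}\bigl( 1 + |\log \|h(s,\cdot)\|_{L^\infty}|^{1/2}\bigr),
\ee
and using $\|h\|_{L^\infty} \leq \|e^{U_\e}\|_{L^\infty} + \|\rho_\e\|_{L^\infty} \leq 2\|\rho_\e\|_{L^\infty}$ (again by Lemma~\ref{lem:eU-Lp} with $p=\infty$) and Lemma~\ref{lem:rho-Linfty}, which in two dimensions gives $\|\rho_\e(s,\cdot)\|_{L^\infty} \leq C(1 + Q_\ast(s)^2)$, I obtain the closed estimate
\be
\| E(s,\cdot)\|_{L^\infty} \leq C \e^{-2}\bigl(1 + \log^{1/2}(e + Q_\ast(s)^2) \bigr).
\ee

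Inserting this into the integrated characteristic bound and passing to the non-decreasing envelope $M(t) := \sup_{0 \leq s \leq t} Q_\ast(s)$, monotonicity of the logarithm gives the scalar inequality
\be
M(t) \leq C \e^{-2}\, t\,\bigl(1 + \log^{1/2}(e + M(t)^2)\bigr).
\ee
The last step is to solve this implicit inequality by a bootstrap with ansatz $M(t)^2 \leq A\, \e^{-4} t^2 \bigl(1 + \log(1 + \e^{-2}t)\bigr)$: substituting this into the right-hand side and exploiting the fact that $\log^{1/2}$ absorbs only constants and additional logarithmic factors, one checks the ansatz is self-consistent for $A$ chosen sufficiently large (depending only on $C_0$). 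Combined with Lemma~\ref{lem:rho-Linfty} and integration in time, this delivers the claimed bound on $Q_\ast(t)^2$, and hence on $\int_0^T A(s)\di s$ in \eqref{est:A-dim2}.

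The main technical obstacle is the last step, namely inverting the implicit inequality between $M(t)$ and $\e^{-2}t$ without losing the logarithmic sharpness. The temptation of a direct Gr\"onwall argument fails because the nonlinearity is superlinear in $t$; the bootstrap with the precise log-correction in the ansatz is what produces the claimed exponent, and any coarser ansatz (e.g.\ $M(t) \lesssim \e^{-2} t$) would not close. All other steps amount to assembling the a priori estimates already recorded in the excerpt.
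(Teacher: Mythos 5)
Your proof follows the paper's route exactly: integrate the characteristic ODE to get $Q_\ast(t)\leq t\|E\|_{L^\infty([0,t]\times\bt^2)}$, combine Lemma~\ref{lem:2d-interpolation}, Lemma~\ref{lem:eU-Lp} and Lemma~\ref{lem:rho-Linfty} to bound $\|E\|_{L^\infty}$ logarithmically in $Q_\ast$, and invert the resulting implicit inequality. The only difference is cosmetic: the paper resolves that inequality by directly inverting the strictly increasing scalar function $b(y)=y/(1+\log(1+y))$ via Lemma~\ref{lem:app-inverse}, whereas you verify an explicit ansatz; both close, though the ``bootstrap'' framing is superfluous here since the inequality is a pointwise algebraic constraint for each fixed $t$ rather than a differential one, so no continuity-in-time argument is needed.
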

\begin{proof}
By Lemma~\ref{lem:eU-Lp}, the electrostatic potential $U$ satisfies the assumptions of Lemma~\ref{lem:2d-interpolation}, with $\| h \|_{L^\infty(\bt^2)} \leq C(1 + Q_\ast(t)^2)$ by Lemma~\ref{lem:rho-Linfty}. Hence the electric field is uniformly bounded:
\be \label{est:electric-Qast}
\| E(t) \|_{L^\infty(\bt^2)} = \| \nabla U (t) \|_{L^\infty(\bt^2)} \leq C \e^{-2} \left ( 1 +  |\log C(1+Q_\ast(t)^2) |^{1/2}  \right ) .
\ee
Next, observe that for any characteristic trajectory $(X(t; 0,x,v), V(t; 0,x,v))$,
\be
|V(t; 0,x,v) - v| \leq \int_0^t |E(X(\tau; 0,x,v))| \di \tau \leq t \| E \|_{L^\infty([0,t] \times \bt^d)} .
\ee
By \eqref{est:electric-Qast},
\be
|V(t; 0,x,v) - v| \leq C \e^{-2} t \sup_{s \in [0,t]} \left ( 1 +  |\log C(1+Q_\ast(s)^2) |^{1/2}  \right ) .
\ee
Taking supremum over $(x,v)$, we obtain that
\begin{align}
Q_\ast(t) &\leq C \e^{-2} t \sup_{s \in [0,t]} \left ( 1 +  |\log C(1+Q_\ast(s)^2) |^{1/2}  \right ) \\
& \leq C t \e^{-2} \left ( 1 +  |\log C(1+ Q_\ast(t)^2) |^{1/2}  \right ) .
\end{align}
Since $\sqrt{x+y} \leq \sqrt{x} + \sqrt{y} \leq \sqrt{2(x+y)} $, we find that
\be
Q_\ast(t) \leq C \e^{-2} t \left ( 1 +  \log (1+ Q_\ast(t)^2 )  \right )^{1/2} ,
\ee
where $C>0$ is a larger constant. We rearrange this to obtain the inequality
\be
\frac{Q_\ast(t)^2}{ 1 +  \log (1+ Q_\ast(t)^2 ) } \leq C  \e^{-4} t^2 . 
\ee
The function
\be
b : y \mapsto \frac{y}{1 + \log(1+y)} 
\ee
is continuous and strictly increasing for $y \geq 0$ -- see Lemma~\ref{lem:app-inverse} below -- and therefore has a well-defined, continuous, strictly increasing inverse $b^{-1}$. We also show in Lemma~\ref{lem:app-inverse} that this inverse obeys the bound
\be
b^{-1}(u) \leq 2 u (1 + \log(1+u)) .
\ee
We deduce that, for some $C > 0$,
\be
Q_\ast(t)^2 \leq C \e^{-4} t^2 (1 + \log(1 + C \e^{-4} t^2)) .
\ee
Since $\log(1+u) \leq \log(1 + \sqrt{u})^2 \leq 2 \log(1 + \sqrt{u})$, after possibly enlarging the constant $C>0$ we find that
\be
Q_\ast(t)^2 \leq C \e^{-4} t^2 (1 + \log(1 + \e^{-2} t)) 
\ee
which completes the proof.

\end{proof}

Proposition~\ref{prop:mass-growth-2d} then follows from Lemma~\ref{lem:rho-Linfty} and Lemma~\ref{lem:characteristic-growth}. The estimate \eqref{est:A-dim2} then follows upon integrating over time.

\subsection{Case $d=3$} \label{sec:moments_3}

In order to control the $L^\infty$ norm of the density in the three dimensional case, we will make use of techniques for estimating the growth of characteristic trajectories over time. These can be traced back to the development of the well-posedness theory for the 3D electron Vlasov--Poisson system \cite{Pfaffelmoser, Schaeffer, Batt-Rein} and results on the propagation of moments \cite{Pallard, ChenChen} on the torus $\bt^3$, where the approach of Lions-Perthame does not apply and techniques based on characteristic trajectories are used instead.

Our method takes as a starting point the techniques of Chen and Chen \cite{ChenChen} for the propagation of moments for the electron model.
We therefore introduce the notation
\be \label{def:Mk}
M_k(t) : = \sup_{s \in[0,t]} \int_{\bt^3 \times \br^3} (1 + |v|^k) f(s,x,v) \di x \di v 
\ee
for the velocity moment of order $k$.

We will prove an estimate for small increments of the characteristic trajectories. For all $t \in[0,T]$ and $\delta \in (0,t]$, we define $Q(t,\delta)$ by
\be
Q(t, \delta) : = \sup_{(x,v) \in \bt^3 \times \br^3} \int_{t-\delta}^t |E(X(s ; 0, x, v)) | \di s .
\ee 
Observe that $Q_\ast(t) \leq Q(t,t)$, so that obtaining estimates on $Q(t,\delta)$ for all $\delta$ will suffice to control the density.

The main new steps required compared to \cite{ChenChen} are:
\begin{enumerate}[label=(\roman*)]
\item To handle the fact that the electric field depends on $\rho$ through the nonlinear Poisson--Boltzmann equation rather than a linear Poisson equation. We will do this by using the \emph{splitting} of the field $E = \bar E + \widehat E$, $\bar E = - \nabla \bar U$ and $\widehat E = - \nabla \widehat U$ for $\bar U, \widehat U$ as defined in Equation \eqref{def:Usplit-3D}; and
\item To quantify carefully the dependence of constants on $\e$ in the quasineutral scaling. For this we will need to revisit the arguments of \cite{ChenChen} in detail.
\end{enumerate}

First, we relate estimates on the moments to estimates on the electric field.
The Coulomb kernel in the three-dimensional torus is the function $K_{\bt^3}$ defined by $K_{\bt^3} = - \nabla G_{\bt^3}$,
\be 
- \Delta G_{\bt^3} = \delta_0 - 1 \qquad \text{in} \; \bt^3 .
\ee 
We note the following result for convolutions against 
$K_{\bt^3}$.
The case $q = +\infty$ is proved in \cite[Lemma 4.5.4]{Hormander}; the general case can be proved using a similar interpolation argument.

\begin{lemma} \label{lem:Efield-Hormander}
Let $d=3$, $1 \leq p < 3 < q \leq +\infty$ and let $h \in L^p \cap L^q$. Then the Coulomb kernel
	\be
	\| K_{\bt^3} \ast h \|_{L^\infty} \leq C_{p,q} \| h \|_{L^p}^{1-\theta} \| h \|_{L^q}^{\theta},
	\ee 
	where the exponent $\theta$ satisfies $\frac{1}{3} = \frac{1-\theta}{p} + \frac{\theta}{q}$. 
\end{lemma}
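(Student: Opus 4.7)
The estimate is a classical Young-type interpolation for a weakly singular kernel, and I would prove it by splitting the Coulomb kernel into a near-singularity piece and a far piece, estimating each by H\"older's inequality, and optimising the splitting radius.

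\textbf{Step 1: pointwise control of $K_{\bt^3}$.} The first thing I would record is that, since $G_{\bt^3}$ differs from the Newtonian kernel $\frac{1}{4\pi |x|}$ by a function smooth up to the singularity, we have a constant $C>0$ such that $|K_{\bt^3}(x)| \leq C |x|_{\bt^3}^{-2}$ for every $x \in \bt^3 \setminus \{0\}$. This is the only property of $K_{\bt^3}$ used beyond the fact that $\bt^3$ has finite measure.

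\textbf{Step 2: decompose at radius $R$ and estimate each piece.} For $R>0$ to be chosen, I decompose
\be
(K_{\bt^3} \ast h)(x) = \underbrace{\int_{|x-y|_{\bt^3}<R} K_{\bt^3}(x-y) h(y) \di y}_{I_1(x)} + \underbrace{\int_{|x-y|_{\bt^3}\geq R} K_{\bt^3}(x-y) h(y) \di y}_{I_2(x)} .
\ee
For the near piece, H\"older with exponents $(q', q)$ together with Step~1 gives
\be
|I_1(x)| \leq \| h \|_{L^q} \left( \int_{|y|_{\bt^3} < R} C^{q'} |y|_{\bt^3}^{-2 q'} \di y \right)^{1/q'} \lesssim \| h \|_{L^q} \, R^{(3-2q')/q'} = \| h \|_{L^q} R^{1-3/q},
\ee
where the integral converges because $q>3$ gives $q'<3/2$, hence $2q'<3$. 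For the far piece, H\"older with exponents $(p',p)$ and the restriction of the computation to the complement of $B_R$ gives
\be
|I_2(x)| \leq \| h \|_{L^p} \left( \int_{R \leq |y|_{\bt^3} \leq \mathrm{diam}(\bt^3)} C^{p'} |y|_{\bt^3}^{-2 p'} \di y \right)^{1/p'} \lesssim \| h \|_{L^p} R^{1-3/p},
\ee
where this time the integral is dominated by its lower endpoint because $p<3$ gives $2p'>3$. (The case $q=+\infty$ is the cleanest: $|I_1(x)| \leq R \| h \|_{L^\infty}$ directly from $\int_{|y|<R}|y|^{-2}\di y \lesssim R$, as in \cite{Hormander}.)

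\textbf{Step 3: optimise $R$ and verify the exponent.} Setting $a := 1-3/q>0$ and $b := 3/p-1 >0$ I get
\be
\| K_{\bt^3} \ast h \|_{L^\infty} \lesssim R^a \| h \|_{L^q} + R^{-b} \| h \|_{L^p}.
\ee
Minimising in $R$ by choosing $R = (\| h \|_{L^p}/\| h \|_{L^q})^{1/(a+b)}$ yields
\be
\| K_{\bt^3} \ast h \|_{L^\infty} \lesssim \| h \|_{L^q}^{b/(a+b)} \| h \|_{L^p}^{a/(a+b)}.
\ee
A direct computation gives $a+b = 3(1/p - 1/q)$ and $b/(a+b) = (1/p - 1/3)/(1/p - 1/q)$, which is exactly the value of $\theta$ determined by $\tfrac13 = \tfrac{1-\theta}{p}+\tfrac{\theta}{q}$. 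Hence the optimised bound is $\| K_{\bt^3} \ast h \|_{L^\infty} \lesssim \| h \|_{L^p}^{1-\theta} \| h \|_{L^q}^{\theta}$, as required.

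\textbf{Expected main difficulty.} There is no real obstacle: the argument is textbook interpolation. The only care needed is in Step~2, where one must verify that the two H\"older exponents really bracket the critical value $q'=3/2$ (this is exactly where the hypothesis $p<3<q$ enters), and in treating the endpoint $q=+\infty$ (so $q'=1$) by replacing the $L^{q'}$ norm of $|y|^{-2}$ on $B_R$ by the explicit computation $\int_{|y|<R} |y|^{-2}\di y \lesssim R$.
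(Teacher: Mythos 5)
Your proof is correct and takes exactly the approach the paper indicates but does not write out: the paper only cites H\"ormander's Lemma 4.5.4 for the endpoint $q=+\infty$ and says the general case follows by a ``similar interpolation argument,'' which is precisely your near/far splitting at radius $R$, H\"older on each piece, and optimisation in $R$. All the exponent bookkeeping checks out, including the verification that $\theta = (1/p - 1/3)/(1/p-1/q)$ matches the claimed relation $\tfrac13 = \tfrac{1-\theta}{p} + \tfrac{\theta}{q}$.
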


By combining Lemma~\ref{lem:Efield-Hormander} with the gain of integrability estimates for $e^U$ (Lemma~\ref{lem:eU-Lp-eps}), we obtain new  $L^\infty$ estimates for $\widehat E_\e$, in which the constants diverge at polynomial rather than exponential rate in $\e$. In order to optimise as much as possible the exponent $\zeta(3)$ in the assumption \eqref{hyp:rate}, we apply Lemma~\ref{lem:eU-Lp-eps} in two different ways, obtaining two bounds: 
\begin{enumerate}[label=(\roman*)]
\item The first depends on $f$ through the $k^{\text{th}}$ velocity moment $M_k$. The maximal power of $M_k$ for which we will be able to close the estimate on $Q(t, \delta)$ is $M_k^{\frac{1}{2(k-2)}}$; this is insufficient to provide the integrability for $e^U$ that we require without a loss in $\e$, but this loss is at a polynomial rate in $\e$.
\item The second estimate is uniform in time -- although not $\e$, with a worse rate of divergence than the first estimate -- and can therefore take over if the moment $M_k$ grows too large. 
\end{enumerate}

\begin{lemma} \label{lem:est-smooth}
	There exists a constant depending only on $\| f_0 \|_{L^\infty}$, $\mc{E}[f_0]$ such that, for all $t \in [0, +\infty)$,
	\be
	\| \widehat E_\e \|_{L^\infty} \leq C \e^{-7} (\e^{-3} \wedge M_k^{\frac{1}{2(k-2)}}) .
	\ee 
\end{lemma}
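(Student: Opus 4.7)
The starting point is the decomposition $U = \bar U + \widehat U$, in which $\bar U$ solves the linearised Poisson equation $-\e^2\Delta\bar U = \rho - 1$ and $\widehat U$ satisfies the nonlinear correction $\e^2\Delta\widehat U = e^U - 1$. Under this decomposition, $\widehat E = -\nabla \widehat U = -\e^{-2}\, K_{\bt^3} \ast (e^U - 1)$, so bounding $\|\widehat E\|_{L^\infty}$ reduces to a convolution estimate against the periodic Coulomb kernel $K_{\bt^3}$. The plan is to apply the H\"ormander-type interpolation of Lemma~\ref{lem:Efield-Hormander} with $p = 5/3$ as the low-integrability side and a variable $q > 3$ as the high-integrability side.

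The low-integrability input is uniform: conservation of energy (Lemma~\ref{lem:InitialEnergy}) bounds the moment $M_2$, Lemma~\ref{lem:moments} then gives $\|\rho_f\|_{L^{5/3}} \leq C$ uniformly in $t$ and $\e$, and Lemma~\ref{lem:eU-Lp} yields $\|e^U\|_{L^{5/3}} \leq C$. For the first branch of the claimed bound (the $\e^{-3}$ term in the minimum), I take $q = \infty$, apply Lemma~\ref{lem:eU-Lp-eps} with input $q_0 = 5/3$ and output $r = \infty$ to obtain $\alpha(5/3,\infty) = 9$ and hence $\|e^U\|_{L^\infty} \leq C\e^{-18}$, and then combine with H\"ormander (exponent $\theta = 4/9$) to get $\|\widehat E\|_{L^\infty} \leq C\e^{-2} \cdot (\e^{-18})^{4/9} = C\e^{-10}$.

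For the second branch $\|\widehat E\|_{L^\infty} \leq C \e^{-7} M_k^{1/(2(k-2))}$, the strategy is to first produce an auxiliary moment-dependent estimate and then interpolate it against the $\e^{-10}$ bound. In the regime $k > 6$, Lemma~\ref{lem:moments} gives $\|\rho\|_{L^{(k+3)/3}} \leq C M_k^{3/(k+3)}$ (with constant depending only on $\|f_0\|_{L^\infty}$), hence $\|e^U\|_{L^{(k+3)/3}} \leq C M_k^{3/(k+3)}$ by Lemma~\ref{lem:eU-Lp}; H\"ormander with $(p,q) = (5/3,(k+3)/3)$ then gives the auxiliary bound $\|\widehat E\|_{L^\infty} \leq C\e^{-2} M_k^{4/(3(k-2))}$. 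In the complementary regime $k \in (2,6]$ the same moment $L^p$ norm is fed into Lemma~\ref{lem:eU-Lp-eps} with output $r = \infty$ to obtain an analogous auxiliary bound. A geometric-mean interpolation between such an auxiliary bound and the $\e^{-10}$ bound (with weight $\lambda = 5/8$) reproduces exactly $C\e^{-7} M_k^{1/(2(k-2))}$; indeed one checks $-10\lambda - 2(1-\lambda) = -7$ and $(4/(3(k-2)))(1-\lambda) = 1/(2(k-2))$ are both solved by $\lambda = 5/8$.

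The main obstacle is the bookkeeping in this final interpolation: the H\"ormander constraint $\tfrac{1}{3} = \tfrac{1-\theta}{p} + \tfrac{\theta}{q}$, the formula for $\alpha$ from Lemma~\ref{lem:eU-Lp-eps}, and the two target exponents in $\e$ and $M_k$ have to be matched as a coupled system, while the admissibility $q > 3$ forces a case split according to whether $k > 6$ or $k \in (2,6]$. Once both auxiliary bounds are established, taking the minimum with the $\e^{-10}$ bound yields the claimed inequality.
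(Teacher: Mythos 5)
Your route to the estimate is correct but genuinely different from the paper's in the key step. The first branch (the $\e^{-10}$ bound) is essentially the paper's computation, except that the paper keeps a finite $r > 3$ in Lemma~\ref{lem:eU-Lp-eps} (which is stated only for finite $r$, with $r$-dependent constants) and observes the $\e$-power is $-10$ for any such $r$; your shortcut to $r = \infty$ isn't literally covered by that lemma, so you should fix a large $r$ and pay the $r$-dependent constant. For the second branch the two proofs diverge. The paper uses a \emph{three-way} H\"ormander interpolation $\|\widehat E_\e\|_{L^\infty} \leq C_r\e^{-2}\|e^U\|_{L^{5/3}}^{1-\eta-\phi}\|e^U\|_{L^{1+k/3}}^\phi\|e^U\|_{L^r}^\eta$ with the tuned exponent $\phi = \frac{k+3}{6(k-2)}$, landing directly on $C\e^{-7}M_k^{1/(2(k-2))}$. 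You instead derive an auxiliary two-point bound $C\e^{-2}M_k^{4/(3(k-2))}$ and then geometrically interpolate it against $\e^{-10}$ using $\min(a,b)\leq a^\lambda b^{1-\lambda}$. Both give the same number. Your route buys something concrete: it uses only the two-point version of Lemma~\ref{lem:Efield-Hormander}, which is what is actually stated and proved, whereas the paper's three-way step tacitly requires an unstated generalization of that lemma.

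There is, however, a real gap in your write-up: the weight $\lambda = 5/8$ you verify is specific to $k > 6$, where $(k+3)/3 > 3$ and the auxiliary bound $C\e^{-2}M_k^{4/(3(k-2))}$ comes straight from Lemmas~\ref{lem:moments}, \ref{lem:eU-Lp} and \ref{lem:Efield-Hormander}. For $3 < k \leq 6$ the index $(k+3)/3 \leq 3$ is outside the admissible range of Lemma~\ref{lem:Efield-Hormander}, so the auxiliary bound must go through Lemma~\ref{lem:eU-Lp-eps} and thereby picks up an extra $\e$-factor. Carrying this out (again with a fixed large $r$) gives an auxiliary bound of the form $C\e^{-2-8/(2k-3)}M_k^{8/(3(2k-3))}$, and the geometric-mean weight that reproduces $C\e^{-7}M_k^{1/(2(k-2))}$ becomes $k$-dependent, $\lambda = \frac{10k-23}{16(k-2)}$, which lies in $(0,1)$ for all $k > 3$. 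This works, but it must be computed, not asserted as ``analogous'': since Lemmas~\ref{lem:est-Q}, \ref{lem:Q-final-3d} and the statement of Theorem~\ref{thm:main} deal with all $k > 3$, and in particular with $k$ close to $3$, the small-$k$ case is the one the paper actually needs.
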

\begin{proof}
By applying Lemma~\ref{lem:Efield-Hormander} with the choice $p = \frac{5}{3}$, $q = r$, we deduce that for any $r \in (3, +\infty)$,
\be
\| \widehat E_\e \|_{L^\infty} = \e^{-2} \| K_{\bt^3} \ast e^U \|_{L^\infty} \leq C_r \e^{-2} \| e^U \|_{L^{5/3}}^{1-\beta} \| e^U \|_{L^r}^{\beta},
\ee 
where $\beta = \frac{4}{3} \left ({3} - \frac{5}{r} \right )^{-1}$.
By Lemma~\ref{lem:eU-Lp-eps}, 
\be
\| e^{U} \|_{L^r} \leq (C_r \e^{-2})^{3(3 - 5/r)} \| \rho \|_{L^{5/3}}^{5(2 - 3/r)} + c_r \e^2,
\ee
and thus 
\be
\| \widehat E_\e \|_{L^\infty} \leq C_r \e^{-2} (C_r \e^{-2})^{4} (1+ \| \rho \|_{L^{5/3}})^{10} .
\ee 
By \eqref{est:53-uniform}, $\| \rho \|_{L^{5/3}}$ is uniformly bounded and hence
\be
\| \widehat E_\e \|_{L^\infty} \leq C_r \e^{-10} .
\ee 
Finally, we fix some $r > 3$ and thereby deduce the result.

Next, by the moment interpolation estimate \eqref{est:rhoLp-moment} we recall that
\be \label{est:moment-Lp-recall}
\| \rho \|_{L^{1 + k/3}} \leq C M_k^{\frac{3}{k+3}} .
\ee
By Lemma~\ref{lem:Efield-Hormander}, for any $\eta, \phi \in [0,1]$ and $r$ satisfying $\eta + \phi \leq 1$ and
\be
\frac{1}{3} = \phi \frac{3}{k+3} + \frac{\eta}{r} + \frac{3}{5} (1-\eta-\phi) ,
\ee
\be \label{est:Eeps-3interpolation}
\| \widehat E_\e \|_{L^\infty} = \e^{-2} \| K_{\bt^3} \ast e^U \|_{L^\infty} \leq C_r \e^{-2} \| e^U \|_{L^{5/3}}^{1-\eta-\phi} \| e^U\|_{L^{1 + k/3}}^\phi \| e^U \|_{L^r}^{\eta} .
\ee 

Choose $\phi = \frac{k+3}{6(k-2)}$, which implies that
\be
\eta \left  (\frac{3}{5} - \frac{1}{r} \right ) = \frac{1}{6} .
\ee
By \eqref{est:53-uniform}, Lemmas~\ref{lem:eU-Lp} and \ref{lem:eU-Lp-eps}, \eqref{est:moment-Lp-recall} and \eqref{est:Eeps-3interpolation},
\be
\| \widehat E_\e \|_{L^\infty}  \leq C_r \e^{-2} M_k^{\frac{1}{2(k-2)}} \e^{-30(3/5 - 1/r)\eta} \leq C_r \e^{-7} M_k^{\frac{1}{2(k-2)}}
\ee 
for any admissible choice of $r$.

\end{proof}

We are able to prove the following estimate on $Q(t,\delta)$; this is analogous to the estimate \cite[Lemma 3.2]{ChenChen}, but for the ion case and quantified in $\e$.
\begin{lemma} \label{lem:est-Q}
Let $k > 3$ and assume that $\sup_{s\in[0,t]}M_k(s)$ is finite. Then, for all $\delta \in [0,t]$,
\be \label{est:Q-Q}
Q(t,\delta)^{3/2} \leq C \e^{-2} \delta^{1/2} \left((\delta Q(t,\delta))^{1/2} \Big ( Q(t,\delta)^{4/3} + M_k(t)^{\frac{1}{2(k-2)}} + \e^{-5} (\e^{-3} \wedge M_k^{\frac{1}{2(k-2)}}) \Big) + M_k(t)^{\frac{1}{2(k-2)}} \right ).
\ee
\end{lemma}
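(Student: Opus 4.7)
The plan is to adapt the Pfaffelmoser/Schaeffer-type trajectory decomposition of Chen and Chen~\cite{ChenChen}, developed for the electron Vlasov-Poisson system on $\bt^3$, to the ionic VPME setting while quantifying every constant explicitly in terms of $\e$. The first move is to decompose $E_\e = \bar E_\e + \widehat E_\e$ according to the linear and nonlinear parts of the Poisson-Boltzmann equation. The contribution of the smoother term $\widehat E_\e$ to $Q(t,\delta)$ is handled immediately by Lemma~\ref{lem:est-smooth}:
$$\int_{t-\delta}^t |\widehat E_\e(X(s;0,x,v))|\,\di s \leq \delta\,\|\widehat E_\e\|_{L^\infty_{t,x}} \leq C\,\delta\,\e^{-7}\bigl(\e^{-3}\wedge M_k^{\tfrac{1}{2(k-2)}}\bigr).$$
After pairing with the $(\delta Q(t,\delta))^{1/2}$ weight produced by the treatment of $\bar E_\e$, this yields exactly the $\e^{-5}(\e^{-3}\wedge M_k^{1/(2(k-2))})$ term on the right-hand side of \eqref{est:Q-Q}.

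The main content of the proof is the estimate for $\bar E_\e$. Since $\bar E_\e(s,x) = \e^{-2} K_{\bt^3}\ast(\rho_f(s,\cdot)-1)(x)$ with $|K_{\bt^3}(z)|\lesssim |z|^{-2}$, the contribution along a reference trajectory $(X_1,V_1)$ reads
$$\e^{-2}\int_{t-\delta}^t \int_{\bt^3\times\br^3}\frac{f(s,y,w)}{|X_1(s)-y|^2}\,\di y\,\di w\,\di s.$$
Following Chen-Chen, I would decompose the integration region in $(w,s)$ into three pieces: (i) $|w|\geq P$, controlled by the moment $M_k$ via Chebyshev, producing the standalone $M_k^{1/(2(k-2))}$ term; (ii) $|w|\leq P$ with small relative velocity $|V_1(s)-w|\leq R$, where one bounds the measure of the bad set using $f\in L^\infty$ and the energy; and (iii) $|w|\leq P$ with $|V_1(s)-w|\geq R$, where the change of variables $(s,y)\mapsto X_2(s;0,y,w)$ provides a Jacobian controlled by $|V_1(s)-w|^{-1}\leq R^{-1}$ which converts the time integral into a spatial integral against $|z|^{-2}$ over a ball of radius comparable to $\delta\,Q(t,\delta)$. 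Optimizing the cutoffs $P$ and $R$ against each other produces the exponent $4/3$ on $Q(t,\delta)$ inside the inner parentheses and factorises out the overall weight $\e^{-2}\delta^{1/2}(\delta Q(t,\delta))^{1/2}$.

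The principal obstacle is the $\e$-bookkeeping. The Chen-Chen argument is already delicate at $\e=1$, and here the ionic nonlinearity introduces the extra term $\widehat E_\e$ whose size must be controlled uniformly without forcing exponential losses. In particular, one must resist absorbing $\widehat E_\e$ into $\bar E_\e$ through a crude $\|e^{U_\e}\|_{L^\infty}$ bound, which would reinstate the exponential-in-$\e$ blow-up present in earlier works such as \cite{GPI-MFQN}. The point is to feed the refined $L^r$ estimate of Lemma~\ref{lem:eU-Lp-eps} into the Hörmander-type bound of Lemma~\ref{lem:Efield-Hormander} at each stage of the Chen-Chen decomposition, so that every intermediate inequality degenerates only polynomially in $\e$. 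Once this is carried out, summation of the three contributions together with the $\widehat E_\e$ piece gives \eqref{est:Q-Q}.
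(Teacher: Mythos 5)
Your top-level architecture matches the paper: split $E_\e = \bar E_\e + \widehat E_\e$, feed the $L^\infty$ bound of Lemma~\ref{lem:est-smooth} into the $\widehat E_\e$ contribution, and run a Chen-Chen style trajectory estimate for $\bar E_\e$. However, the decomposition you describe for the main integral is not the Chen-Chen decomposition and, as stated, would not produce the claimed exponents.

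The paper (following \cite{ChenChen}) decomposes $[t-\delta,t]\times\br^3\times\br^3$ into a good set $\Omega_G$ where $|v-V_\ast(s)|\leq 5Q$ or $|v|\leq 5Q$, a bad set $\Omega_B$ where $|x-X_\ast(s)|\leq R/\Lambda_\gamma(v)$, and the remaining ugly set $\Omega_U$. The central device is the weight $\Lambda_\gamma(v)$, \emph{not} a hard velocity cutoff $|w|\leq P$: choosing $\gamma=(M_k/M_2)^{1/(k-2)}$ gives $\int \Lambda_\gamma(v)\,f\,\di v \lesssim M_k^{1/(k-2)}$ and $\int \Lambda_\gamma^{-1}\,\di v\lesssim 1$ simultaneously, which is what produces the factor $M_k^{1/(2(k-2))}$ with no loss of an epsilon, and leaves only the single parameter $R$ to be optimized (balancing $\Omega_B\sim R$ against $\Omega_U\sim R^{-1}$). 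The $P$-then-$R$ decomposition you propose is a Schaeffer/Pallard-type scheme and would reintroduce the $(1+\epsilon)$-loss that the paper explicitly designs $\Lambda_\gamma$ to remove. Relatedly, the exponent $4/3$ on $Q(t,\delta)$ does not come from optimizing $P$ against $R$; it comes from $\Omega_G$ alone, where $\rho|_{\Omega_G}\lesssim Q^3$ in $L^\infty$, the full density is uniformly $L^{5/3}$ by energy conservation, and the H\"ormander interpolation of Lemma~\ref{lem:Efield-Hormander} with $p=5/3$, $q=\infty$ (so $\theta=4/9$) yields $\|K_{\bt^3}\ast\rho|_{\Omega_G}\|_{L^\infty}\lesssim (Q^3)^{4/9}=Q^{4/3}$ before any optimization.

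You also omit the torus-specific mechanism that is essential in $\Omega_U$: one must lift the characteristic to $\br^3\times\br^3$, sum over the integer shifts $\alpha\in\Z^3$ for which the shifted trajectory enters $\Omega_U$, bound the cardinality of that set via \cite[Lemma 3]{Batt-Rein}, and combine the static lower bound $|\tilde X_\alpha(s)-X_\ast(s)|>R/\Lambda_\gamma(\tilde V(s))$ with the dynamical lower bound $|\tilde X_\alpha(\tau)-X_\ast(\tau)|\gtrsim |\tau-\tau_\ast|\,|v-V_\ast(t)|$ obtained from the mean value theorem. A change of variables with a $|V_1(s)-w|^{-1}$ Jacobian bound, as you sketch, is not a substitute for this argument on $\bt^3$. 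These are not cosmetic differences: without $\Lambda_\gamma$ and the lifted-trajectory counting, you cannot arrive at \eqref{est:Q-Q} with the stated constants.
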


\begin{proof}
We use the decomposition $E = \bar E + \widehat E$.

To estimate $\bar E$, we use the standard local decomposition of the periodic Coulomb kernel (see e.g. \cite{Titchmarsh}). More precisely, after identifying $\bt^3$ with its fundamental domain $[-1/2,1/2)^3$, there exist smooth functions $K_0 \in C^\infty(B_{1/4}(0))$ and $K_1 \in C^\infty(\bt^3 \setminus B_{1/4}(0))$ such that
\be
K_{\bt^3}(x)= \frac{x}{4\pi |x|^3}+K_0(x), \qquad \mbox{if } |x|<\frac14,
\ee
and
\be
K_{\bt^3}(x)=K_1(x), \qquad \mbox{if } |x|\ge \frac14.
\ee
In other words, $K_0$ is the smooth remainder obtained after subtracting the Coulomb singularity from $K_{\bt^3}$ near the origin, while $K_1$ is simply the restriction of $K_{\bt^3}$ away from the singularity. In particular, both $K_0$ and $K_1$ are bounded on their respective domains.

We may then write
\begin{multline}
Q(t, \delta) \leq \sup_{(x,v) \in \bt^3 \times \br^3} C \e^{-2} \int_{t-\delta}^t \int_{\br^d} \int_{\br^d} \frac{f(s,x',v')}{|x' - X(s; 0,x,v)|^2} \mathbbm{1}_{B_{1/4}}(x' - X(s; 0,x,v)) \di x' \di v' \di s \\
 + \delta \e^{-2} \| (K_0 + K_1) \ast \rho_f \|_{L^\infty((t-\delta, t] \times \bt^3)} + \delta \| \widehat E \|_{L^\infty((t-\delta, t] \times \bt^3)} .
\end{multline}

By Lemma~\ref{lem:est-smooth}, there exists a constant $C>0$ such that
\be
\| \widehat E_\e \|_{L^\infty} \leq  C \e^{-7} (\e^{-3} \wedge M_k^{\frac{1}{2(k-2)}}).
\ee
Thus
\begin{multline} \label{est:Q-prelim}
Q(t, \delta) \leq  C \e^{-7} (\e^{-3} \wedge M_k^{\frac{1}{2(k-2)}}) \\
 + \sup_{(x,v) \in \bt^3 \times \br^3} C \e^{-2} \int_{t-\delta}^t \int_{\br^3 \times \br^3} \frac{f(s,x',v')}{|x' - X(s; 0,x,v)|^2} \mathbbm{1}_{B_{1/4}}(x' - X(s; 0,x,v)) \di x' \di v' \di s  .
\end{multline}

The second term of \eqref{est:Q-prelim} is estimated using methods based on \cite{Pfaffelmoser, Schaeffer, Batt-Rein, Pallard, ChenChen}. 
First, fix a particular characteristic trajectory $X_\ast(s) : = X(s ; 0,x_\ast ,v_\ast)$, $V_\ast(s) : = V(s ; 0,x_\ast,v_\ast)$ considered as a lifted trajectory in $\br^3 \times \br^3$, as was done in \cite{Batt-Rein}.
Next, we consider the following decomposition of the set $[t-\delta, t] \times \br^3 \times \br^3$: for some parameters $R, \gamma > 0$ to be determined, let $\Lambda_\gamma : \br^3 \to \br_+$ denote the function 
\be \label{def:Lambda}
\Lambda_\gamma(v) = 1 + \gamma |v|^2 \mathbbm{1}_{|v| \leq \gamma} + \gamma^{3-k} |v|^k \mathbbm{1}_{|v| > \gamma} ,
\ee 
and let the sets $\Omega, \Omega_G, \Omega_B, \Omega_U \subset [t-\delta, t] \times \bt^3 \times \br^3$ be defined by
\begin{align} \label{def:G}
& \Omega : = \{ (s,x,v) \in [t-\delta, t] \times \br^3 \times \br^3 : |x - X_\ast(s)| < \frac{1}{4} \} \\
&\Omega_G : = \{ (s,x,v) \in \Omega : |v - V_\ast(s)| \leq 5 Q(t,\delta) \; \text{or} \; |v| \leq 5 Q(t,\delta) \} \\ \label{def:B}
&\Omega_B : = \left \{ (s,x,v) \in \Omega : |x - X_\ast(s)| \leq \frac{R}{ \Lambda_\gamma(v)} \right \} \setminus \Omega_G \\ \label{def:U}
&\Omega_U : = \Omega \setminus (\Omega_G \cup \Omega_B) .
\end{align}

The decomposition \eqref{def:G}-\eqref{def:B}-\eqref{def:U} is taken as in \cite[Lemma 3.2]{ChenChen}, except that we replace the function $\Lambda(v) = 1 + |v|^{1+\e}$ in the definition \eqref{def:B} of the set $\Omega_B$ with $\Lambda_\gamma$ as defined above in \eqref{def:Lambda}. The purpose of this is to allow us to obtain a sharp exponent in our eventual final estimate, with no `loss of an epsilon'.
Observe that
\be \label{est:Lambda-inv-int}
\int_{\br^d} \frac{1}{\Lambda_\gamma(v)} \di v \leq \gamma^{-1} \int_{|v| \leq \gamma} |v|^{-2} +  \gamma^{k-3} \int_{|v| > \gamma}  |v|^{-k} \di v \leq C,
\ee
for some constant $C>0$ independent of $\gamma$, and
\be
\int_{\bt^d \times \br^d} \Lambda_\gamma(v) f(t,x,v) \di x \di v \leq 1 + \gamma M_2(t) +  \gamma^{3-k} M_k(t) .
\ee
From now on, we will set $\gamma : = \left ( M_k(t) M_2(t)^{-1} \right )^{1/(k-2)}$. This ensures that
\be \label{int:Lambda}
\sup_{s \leq t} \int_{\bt^d \times \br^d} \Lambda_\gamma(v) f(s,x,v) \di x \di v \leq 1 + M_2(t)^{\frac{k-3}{k-2}} M_k(t)^{\frac{1}{k-2}} \leq C M_k(t)^{\frac{1}{k-2}} ,
\ee
where the last inequality follows from $M_k \geq1$ and the conservation of energy: $M_2(t) \leq M_2(0)$. We also write $\Lambda : = \Lambda_\gamma$ to lighten the notation.

\textbf{Region $\Omega_G$:} In this region, either $|v| \leq 5 Q(t,\delta)$ or $|v - V_\ast(s)| \leq 5 Q(t,\delta)$. Hence
\be
0 \leq f\rvert_{\Omega_G}(s,x,v) \leq \| f_0 \|_{L^\infty} \left ( \mathbbm{1}_{|v| \leq 5 Q(t,\delta)} +  \mathbbm{1}_{|v - V_\ast(s) | \leq 5 Q(t,\delta)} \right ) .
\ee
Thus, for all $(s,x) \in [t- \delta, t]$,
\be
0 \leq \int_{\br^3} f (s,x, v) \di v  \leq \int_{\br^3} f \rvert_{\Omega_G} (s,x, v) \di v \leq C \| f_0 \|_{L^\infty} Q(t,\delta)^3 ,
\ee
and hence
\be
\left \| \int_{\br^3} f \rvert_{\Omega_G} (\cdot, \cdot, v) \di v \right \|_{L^\infty_{s,x}} \leq C \| f_0 \|_{L^\infty} Q(t,\delta)^3 .
\ee
It then follows by \cite[Lemma 4.5.4]{Hormander} (see Lemma~\ref{lem:Efield-Hormander}, case $q=+\infty$) that
\be
\int_{\Omega_G} \frac{f(s,x,v)}{|x - X_\ast(s)|^2} \di x \di v \di s \leq C \delta Q(t,\delta)^{4/3} .
\ee

\textbf{Region $\Omega_B$:} In this region, $|x - X_\ast(s)| \leq R \Lambda (v)^{-1}$; hence
\begin{align}
\int_{\Omega_B} \frac{f(s,x,v)}{|x - X_\ast(s)|^2} \di x \di v \di s &\leq \| f_0 \|_{L^\infty} \int_{t-\delta}^t \int_{\br^3} \int_{|y| \leq R \Lambda(v)^{-1}} |y|^{-2} \di y \di v \di s \\
& \leq C \| f_0 \|_{L^\infty} R \int_{t-\delta}^t \int_{\br^3} \Lambda (v)^{-1} \di v .
\end{align}
Thus, by \eqref{est:Lambda-inv-int},
\be
\int_{\Omega_B} \frac{f(s,x,v)}{|x - X_\ast(s)|^2} \di x \di v \di s \leq C \delta R.
\ee

\textbf{Region $\Omega_U$:} Here we follow the arguments of \cite{Batt-Rein, Pallard, ChenChen}, replacing the function $1 + |v|^{1+\e}$ in \cite{ChenChen} by $\Lambda (v)$: we wish to estimate
\be 
I_U : = \int_{\Omega_U} \frac{f(s,x,v)}{|x - X_\ast(s)|^2} \di x \di v \di s .
\ee
We perform the change of variables $(\tilde x, \tilde v) = \left ( X(t; s,x,v) , V(t; s,x,v) \right )$. Since then $f(s,x,v) = f(t,\tilde x, \tilde v)$, we have
\be
I_U = \int_{t-\delta}^t \int_{\br^3 \times \br^3}  f(t,\tilde x, \tilde v) \frac{\mathbbm{1}_U \left (s, Z(s ; t, \tilde x, \tilde v) \right )}{|X(s ; t, \tilde x, \tilde v) - X_\ast(s)|^2} \di \tilde x \di \tilde v \di s .
\ee

Next, write the $x$ domain as the union $\br^3 = \bigcup_{\alpha \in \Z^3} \alpha + \left [ - \frac{1}{2}, \frac{1}{2} \right )^3$. Then
\be
I_U = \sum_{\alpha \in \Z^3} \int_{\left [ - \frac{1}{2}, \frac{1}{2} \right )^3} \int_{\br^3} f(t, x + \alpha, v) \int_{t-\delta}^t \frac{\mathbbm{1}_U \left (s, Z(s; t, x+\alpha, v) \right )}{| X(s; t, x+\alpha, v) - X_\ast(s)|^2} \di s \di x \di v .
\ee
By periodicity we note that $f(t, x+\alpha, v) = f(t,x,v)$ and $E(s,x+\alpha) = E(s,x)$ for all $\alpha \in \Z^3$ and all $s \geq 0$. It follows that the (lifted) flow commutes with shifts in the $x$ variable:
\be
X(s; t, x+\alpha, v) = \alpha + X(s; t,x,v) , \qquad V(s; t, x+\alpha, v) = V(s; t,x,v) .
\ee
We introduce the shorthand $\tilde Z(s) = \left (\tilde X(s), \tilde V(s) \right ) = \left ( X(s; t,x,v),  V(s; t,x,v) \right )$ for $(x,v) \in \left [ - \frac{1}{2}, \frac{1}{2} \right )^3 \times \br^3$, and $\tilde X_\alpha(s) = \alpha + \tilde X(s)$, $\tilde Z_\alpha = (\tilde X_\alpha, \tilde V)$. Thus
\be
I_U = \int_{\left [ - \frac{1}{2}, \frac{1}{2} \right )^3} \int_{\br^3} f(t, x, v) \sum_{\alpha \in \Z^3}  \int_{t-\delta}^t \frac{\mathbbm{1}_U \left (s, \tilde Z_\alpha(s) \right )}{| \tilde X_\alpha (s) - X_\ast(s)|^2} \di s \di x \di v .
\ee
We need only include those $\alpha$ in the set
\be
A(x,v) : = \left \{ \alpha \in \Z^3 : \exists s \in [t-\delta, t], \; (s, \tilde X_\alpha(s), \tilde V(s) ) \in \Omega_U \right \} ;
\ee
hence
\be
I_U = \int_{\left [ - \frac{1}{2}, \frac{1}{2} \right )^3} \int_{\br^3} f(t, x, v) \sum_{\alpha \in A(x,v)}  \int_{t-\delta}^t \frac{\mathbbm{1}_U \left (s, \tilde Z_\alpha(s) \right )}{| \tilde X_\alpha (s) - X_\ast(s)|^2} \di s \di x \di v .
\ee

We now seek a lower bound on $| \tilde X_\alpha (s) - X_\ast(s)|$, given that $\left (s, \tilde X_\alpha (s), \tilde V(s) \right ) \in \Omega_U$.
First, from the definition of $\Omega_U$ we have
\be \label{est:lower-static}
|\tilde X_\alpha(s) - X_\ast(s)| > \frac{R}{\Lambda \left ( \tilde V(s) \right )} .
\ee
A second estimate can be obtained by using the dynamics of solutions to the characteristic ODE. Arguing exactly as in \cite{Schaeffer, Batt-Rein}, we may show that for all $\tau \in [t-\delta, t]$,
\be \label{est:lower-dynamic}
| \tilde X_\alpha (\tau) - X_\ast(\tau) | \geq |\tau - \tau_\ast| \left ( |\tilde V(\tau_\ast) - V_\ast( \tau_\ast) | - Q(t,\delta) \right ),
\ee
where $\tau_\ast$ is such that
\be
 |\tilde X_\alpha (\tau_\ast) - X_\ast( \tau_\ast) | = \min_{\tau \in [t-\delta, t]} | \tilde X(\tau) - X_\ast(\tau) | .
\ee
Indeed, letting
\be
\xi(\tau) : = \tilde X_\alpha (\tau) - X_\ast(\tau),
\ee
by the mean value theorem we have
\be
|\xi(\tau) - \xi(\tau_\ast) - (\tau - \tau_\ast) \dot \xi(\tau_\ast)| \leq |\tau - \tau_\ast| \sup_{\theta \in [t-\delta, t]} |\dot \xi (\tau_\ast) - \dot \xi(\theta)| \leq |\tau - \tau_\ast| Q(t,\delta) .
\ee
Since $\tau_\ast$ is a minimiser, $(\tau - \tau_\ast) \xi (\tau_\ast) \cdot \dot \xi(t_\ast) \geq 0$, and hence
\be
|  \xi(\tau_\ast) + (\tau - \tau_\ast) \dot \xi(\tau_\ast) |^2 \geq |\tau - \tau_\ast|^2 |\dot \xi(\tau_\ast)| .
\ee
We conclude by the (reverse) triangle inequality.

Next, we determine bounds for \eqref{est:lower-static} and \eqref{est:lower-dynamic} depending on the values of the characteristic trajectories at the final time $t$. First recall that, by definition of $\Omega_U$, $| \tilde V(s) - V_\ast(s)| > 5Q$. Moreover, by definition of $Q(t,\delta)$, for any $\tau \in [t-\delta, t]$ we may estimate
\begin{align}
| ( \tilde V(\tau) - V_\ast(\tau)  ) - ( \tilde V(s) - V_\ast(s) ) | & \leq 2 Q(t, \delta) \\
& \leq \frac{2}{5} |  \tilde V(s) - V_\ast(s) | .
\end{align}
Thus, by the triangle inequality,
\be \label{est:global-lb}
\frac{3}{5}  |  \tilde V(s) - V_\ast(s) | \leq | \tilde V(\tau) - V_\ast(\tau)| \leq \frac{7}{5}  |  \tilde V(s) - V_\ast(s) | .
\ee
Choosing $\tau = t$, we have
\be
 \frac{5}{7}  | v - V_\ast(t)| \leq |  \tilde V(s) - V_\ast(s) |  \leq  \frac{5}{3}  | v - V_\ast(t)| .
\ee
Hence we may rewrite the global bounds \eqref{est:global-lb} in terms of the value at time $t$: for all $\tau \in [t-\delta, t]$,
\be \label{est:global-bound-t}
\frac{3}{7}  | v - V_\ast(t)|  \leq | \tilde V(\tau) - V_\ast(\tau)| \leq \frac{7}{3}  |  \tilde V(s) - V_\ast(s) | .
\ee
Moreover, by \eqref{est:global-lb} once again, 
\be
Q(t,\delta) \leq \frac{1}{5} |  \tilde V(s) - V_\ast(s) |  \leq \frac{1}{3}  | v - V_\ast(t)| .
\ee
Then, by \eqref{est:lower-dynamic},
\be \label{est:lower-dynamic-t}
| \tilde X_\alpha (\tau) - X_\ast(\tau) | \geq \frac{2}{21} |\tau - \tau_\ast|  | v - V_\ast(t)| .
\ee

Similarly, since
\be
|\tilde V(s) - v| \leq Q(t,\delta) \leq \frac{1}{5} |\tilde V(s)|,
\ee
then 
\be
|v| \geq  |\tilde V(s)| - |\tilde V(s) - v| \geq \frac{4}{5}  |\tilde V(s)| .
\ee
Since $\Lambda$ is a radially increasing function,
\be
\Lambda(|\tilde V(s)|) \leq \Lambda \left ( \frac{5}{4} v \right ) \leq \left ( \frac{5}{4} \right )^k \Lambda (v) .
\ee
Hence, by \eqref{est:lower-static},
\be \label{est:lower-static-t}
|\tilde X_\alpha(s) - X_\ast(s)| > C_k \frac{R}{\Lambda \left (v \right )} .
\ee

By combining \eqref{est:lower-dynamic-t} and \eqref{est:lower-static-t}, we find that, for some $\tau_\ast \in [t-\delta, t]$
\be
|\tilde X_\alpha(s) - X_\ast(s)|^{-2} \leq C_k \left ( R^{-2} \Lambda(v)^2 \wedge |s - \tau_\ast|^{-2} |v - V_\ast(t)|^{-2} \right ).
\ee
Therefore
\be
I_U \leq C_k \int_{\left [ - \frac{1}{2}, \frac{1}{2} \right )^3} \int_{\br^3} f(t, x, v) |A(x,v)| \sup_{\tau_\ast} \int_{t-\delta}^t \left ( R^{-2} \Lambda(v)^2 \wedge |s - \tau_\ast|^{-2} |v - V_\ast(t)|^{-2} \right ) \di s \di x \di v .
\ee
We calculate
\begin{align}
 \sup_{\tau_\ast} \int_{t-\delta}^t \left ( R^{-2} \Lambda(v)^2 \wedge |s - \tau_\ast|^{-2} |v - V_\ast(t)|^{-2} \right ) \di s & \leq 2 \int_0^\delta \left ( R^{-2} \Lambda(v)^2 \wedge |\theta|^{-2} |v - V_\ast(t)|^{-2} \right )  \di \theta \\
 & \leq C \frac{\Lambda(v)}{R |v - V_\ast(t)|}.
\end{align}

Finally, by \cite[Lemma 3]{Batt-Rein} and \eqref{est:global-bound-t},
\begin{align}
|A(x,v)| & \leq C \left ( 1 + \int_{t-\delta}^t |\tilde V(\tau) - V_\ast (\tau) | \di t \right ) \\
& \leq C \left ( 1 + \delta | v - V_\ast (t) | \right ) .
\end{align}
Therefore
\be
I_U \leq C_k R^{-1}  \int_{\left [ - \frac{1}{2}, \frac{1}{2} \right )^3} \int_{\br^3} f(t, x, v) \Lambda(v) \left ( \delta  +  | v - V_\ast (t) |^{-1}  \right ) \di x \di v .
\ee
By \eqref{est:global-lb}, $ | v - V_\ast (t) |^{-1} \leq \frac{1}{3} Q(t,\delta)^{-1}$, and thus by \eqref{int:Lambda} we find that
\begin{align}
I_U & \leq C_k R^{-1} \left ( \delta  +  Q(t,\delta)^{-1}  \right ) \int_{\left [ - \frac{1}{2}, \frac{1}{2} \right )^3} \int_{\br^3} f(t, x, v) \Lambda(v)  \di x \di v \\
& \leq C_k \delta \frac{1}{R} (1 + (\delta Q(t,\delta))^{-1} )M_k(t)^{\frac{1}{k-2}} .
\end{align}

Summing over $\Omega_G, \Omega_B, \Omega_U$ gives
\begin{multline}
	\int_{t-\delta}^t \int_{\bt^d} \int_{\br^d} \frac{f(s,x,v)}{|x - X_\ast(s)|^2} \di x \di v \di s \leq
C \delta Q(t,\delta)^{4/3} 
+ C \delta R
+ C \delta \frac{1}{R} (1 + (\delta Q(t,\delta))^{-1} )M_k(t)^{\frac{1}{k-2}} .
\end{multline}
The optimal choice of $R$ is $R = (1 + (\delta Q(t,\delta))^{-1} )^{1/2} M_k(t)^{\frac{1}{2(k-2)}}$, giving the estimate
\be
\int_{t-\delta}^t \int_{\bt^d} \int_{\br^d} \frac{f(s,x,v)}{|x - X_\ast(s)|^2} \di x \di v \di s \leq C \delta \left (Q(t,\delta)^{4/3} + \left (1 + (\delta Q(t,\delta))^{-1} \right )^{1/2}M_k(t)^{\frac{1}{2(k-2)}}  \right ) .
\ee
Substituting this into inequality \eqref{est:Q-prelim} gives 
\be
Q(t,\delta) \leq C \e^{-2} \delta \left (Q(t,\delta)^{4/3} + (\delta Q(t,\delta))^{-1/2}M_k(t)^{\frac{1}{2(k-2)}} + (M_k(t)^{\frac{1}{2(k-2)}} + \e^{-5} (\e^{-3} \wedge M_k^{\frac{1}{2(k-2)}}) \right ) .
\ee
Hence
\be
Q(t,\delta)^{3/2} \leq C \e^{-2} \delta^{1/2} \left ((\delta Q(t,\delta))^{1/2} \Big( Q(t,\delta)^{4/3} + M_k(t)^{\frac{1}{2(k-2)}} +\e^{-5} (\e^{-3} \wedge M_k^{\frac{1}{2(k-2)}} \Big) + M_k(t)^{\frac{1}{2(k-2)}} \right ) ;
\ee
this completes the proof.
\end{proof}

The relation \eqref{est:Q-Q} can then be resolved so as to obtain an estimate on $Q(t,t)$, by using an extension of the method of \cite[Proposition 3.3]{ChenChen}. We will explain the argument in detail in order to show how to keep proper track of the dependence on $\e$ and handle the extra term $\e^{-7} (\e^{-3} \wedge M_k^{\frac{1}{2(k-2)}}) $ arising from the additional part $\widehat E$ of the electric field.

\begin{lemma} \label{lem:Qsmalld}
There exists $\e_\ast > 0$ such that the following holds for all $\e \leq \e_\ast$.
For all $t \geq 0$ there exists $\delta_\ast(t)$ such that for all $\delta \leq \delta_\ast(t)$
\be \label{est:Q-smalld-statement}
Q(t,\delta) \leq C \e^{-4/3} \delta^{1/3}  M_k(t)^{\frac{1}{3(k-2)}} .
\ee
Explicitly, we may take
\be
\delta_\ast(t) : = C \e M_k(t)^{\frac{1}{2(k-2)}} (M_k(t)^{\frac{1}{2(k-2)}} + \e^{-5} (\e^{-3} \wedge M_k^{\frac{1}{2(k-2)}}) )^{-3/2} 
\ee
for some $C>0$ independent of $\epsilon$ and $t$. 
\end{lemma}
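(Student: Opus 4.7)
The target estimate \eqref{est:Q-smalld-statement} is a purely algebraic consequence of the implicit relation \eqref{est:Q-Q} once $\delta$ is small enough for the $\Phi := M_k(t)^{1/(2(k-2))}$ term on the right to dominate. I would extract this by a bootstrap/continuity argument in $\delta$. Set $\Psi := \e^{-5}(\e^{-3}\wedge \Phi)$, and, with a constant $C_1>0$ to be fixed, define the candidate bound $T(\delta) := C_1\,\e^{-4/3}\delta^{1/3}\Phi^{2/3}$; the goal is to show $Q(t,\delta)\le T(\delta)$ for all $\delta\le \delta_\ast(t)$.

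The continuity argument proceeds as follows. The map $\delta\mapsto Q(t,\delta)$ is continuous with $Q(t,0)=0$, and by definition it satisfies $Q(t,\delta)\le\delta\,\|E\|_{L^\infty([t-\delta,t]\times\bt^3)}$, so $Q(t,\delta)$ grows at most linearly in $\delta$ near zero while $T(\delta)\sim\delta^{1/3}$ grows strictly slower. Hence, on a small initial interval, the strict inequality $Q(t,\delta) < T(\delta)$ certainly holds. I would then consider the set
\[
S := \bigl\{\delta\in[0,\delta_\ast(t)] \,:\, Q(t,\delta')\le 2T(\delta') \text{ for all } \delta'\in[0,\delta]\bigr\},
\]
which is closed and non-empty. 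The bootstrap step is to prove that $\delta\in S$ actually implies the improved bound $Q(t,\delta)\le T(\delta)$, so that $S$ is also open in $[0,\delta_\ast(t)]$ and therefore equal to the whole interval.

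To carry out the improvement, I would substitute $Q(t,\delta)\le 2T(\delta)$ into the right-hand side of \eqref{est:Q-Q}. The resulting bound splits into four contributions after expansion: the $\Phi$ term outside the $(\delta Q)^{1/2}$ factor, and inside the bracket the three terms $Q^{4/3}$, $\Phi$, and $\Psi$. Each must be bounded by $\frac{1}{4}T(\delta)^{3/2} = \frac{1}{4}C_1^{3/2}\e^{-2}\delta^{1/2}\Phi$. The $\Phi$ contribution is controlled simply by choosing $C_1\ge (4C)^{2/3}$. Each of the remaining three yields an upper bound on $\delta$:
\begin{equation*}
\delta \le c_1\,\e^{11/5}\Phi^{-1/5},\qquad \delta \le c_2\,\e\,\Phi^{-1/2},\qquad \delta \le c_3\,\e\,\Phi\,\Psi^{-3/2},
\end{equation*}
coming respectively from the $Q^{4/3}$, $\Phi$, and $\Psi$ terms. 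The last two combine into $\delta\le c\,\e\,\Phi\,(\Phi+\Psi)^{-3/2}$, which is precisely the stated $\delta_\ast(t)$, and one checks that the first condition $c_1\,\e^{11/5}\Phi^{-1/5}$ is implied by $\delta_\ast(t)$ using $\Phi\ge 1$ and $\Psi\ge\e^{-5}$.

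\textbf{Main obstacle.} The delicate point is the careful accounting of $\e$-powers and the verification that the bound from $Q^{4/3}$—which originates from the ``good'' region $\Omega_G$ in the proof of Lemma~\ref{lem:est-Q} and scales with an awkward exponent—does not force a smaller $\delta_\ast$ than the one stated. This is the only place where all three terms in the bracket must be balanced simultaneously, and it is where the choice of $\Lambda_\gamma$ (rather than $1+|v|^{1+\e}$) in \eqref{def:Lambda} pays off, ensuring a clean exponent $\frac{4}{3}$ in $Q^{4/3}$ and no loss of a further $\e$.
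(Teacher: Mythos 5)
Your proposal is correct and essentially mirrors the paper's argument: both extract from \eqref{est:Q-Q} the provisional estimate $Q(t,\delta)\lesssim \e^{-4/3}\delta^{1/3}M_k(t)^{1/(3(k-2))}$, both derive the same three $\delta$-constraints (from the $Q^{4/3}$, $M_k^{1/(2(k-2))}$, and $\e^{-5}(\e^{-3}\wedge M_k^{1/(2(k-2))})$ terms in the bracket), both observe that the $Q^{4/3}$ constraint is non-binding for $\e$ small, and both combine the remaining two into the stated $\delta_\ast(t)$; your continuity/bootstrap framing is simply a rephrasing of the paper's device of defining $\bar\delta$ as a supremum and performing a case analysis on which inequality saturates. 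One tangential imprecision in your closing remark: the modified weight $\Lambda_\gamma$ in Lemma~\ref{lem:est-Q} is what sharpens the exponent on $M_k$ (avoiding an $\e$-loss there), whereas the clean exponent $4/3$ on $Q$ comes from the $L^\infty$-bound on $f$ over $\Omega_G$ and the interpolation inequality, independently of the choice of $\Lambda$.
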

\begin{proof}
The idea of the proof is the following:
since $\lim_{\delta \to 0}Q(t,\delta) = 0$, for sufficiently small $\delta \in (0,t]$ (we will determine precisely how small later in the proof) we have
\be \label{hyp:smalld}
(\delta Q(t,\delta))^{1/2} \left ( Q(t,\delta)^{4/3} + M_k(t)^{\frac{1}{2(k-2)}} + \e^{-5} (\e^{-3} \wedge M_k^{\frac{1}{2(k-2)}}) \right ) \leq 2 M_k(t)^{\frac{1}{2(k-2)}}.
\ee
As long as this holds, \eqref{est:Q-Q} will imply an estimate of the form
\be
Q(t,\delta)^{3/2} \leq C \e^{-2} \delta^{1/2}  M_k(t)^{\frac{1}{2(k-2)}} 
\ee
and we may therefore conclude that
\be \label{est:Q-smalld}
Q(t,\delta) \leq C \e^{-4/3} \delta^{1/3}  M_k(t)^{\frac{1}{3(k-2)}} .
\ee

We now wish to explicitly quantify how small $\delta$ must be in order for \eqref{hyp:smalld}, and thus \eqref{est:Q-smalld}, to hold. That is, we will find $\delta_\ast(t)$ such that \eqref{hyp:smalld} holds for all $\delta \in (0, \delta_\ast(t)]$. First, let
\begin{multline}
 \label{def:bard}
\bar \delta(t) : = \sup \Bigg \{ \delta \in (0,t] :  \delta^{1/2} Q(t,\delta)^{11/6} \leq M_k(t)^{\frac{1}{2(k-2)}}, \\
(\delta Q(t,\delta))^{1/2} ( M_k(t)^{\frac{1}{2(k-2)}} + \e^{-5} (\e^{-3} \wedge M_k^{\frac{1}{2(k-2)}}) ) \leq M_k(t)^{\frac{1}{2(k-2)}} \Bigg \} .
\end{multline}
Then \eqref{hyp:smalld} holds at least for all $\delta \in (0, \bar \delta(t)]$. 

We next seek a lower bound on $\bar \delta(t)$. If $\bar \delta(t) = t$, then there is nothing to prove. Otherwise, one of the two inequalities defining the supremum \eqref{def:bard} is attained when $\delta = \bar \delta(t)$. We consider each of the cases separately. For ease of reading we shorten $\bar \delta(t)$ to $\bar \delta$ from now on.

\noindent \textbf{Case 1: $(\bar \delta Q(t,\bar \delta))^{1/2} ( M_k(t)^{\frac{1}{2(k-2)}} + \e^{-5} (\e^{-3} \wedge M_k^{\frac{1}{2(k-2)}} )) = M_k(t)^{\frac{1}{2(k-2)}}$.}  
Hence
\be
M_k(t)^{\frac{1}{2(k-2)}}  ( M_k(t)^{\frac{1}{2(k-2)}} + \e^{-5} (\e^{-3} \wedge M_k^{\frac{1}{2(k-2)}} ))^{-1}  = (\bar \delta Q(t,\bar \delta))^{1/2} \leq C \e^{-2/3} {\bar \delta}^{2/3}  M_k(t)^{\frac{1}{6(k-2)}} .
\ee
After rearranging, we obtain
\be \label{est:12}
\bar \delta \geq C \e M_k(t)^{\frac{1}{2(k-2)}} (M_k(t)^{\frac{1}{2(k-2)}} + \e^{-5} (\e^{-3} \wedge M_k^{\frac{1}{2(k-2)}}))^{-3/2} 
\ee
which is the desired lower bound on $\bar \delta$.

\noindent \textbf{Case 2: $\bar \delta^{1/2} Q(t, \bar \delta)^{11/6} = M_k(t)^{\frac{1}{2(k-2)}}$.} We will show that this case is excluded for all sufficiently small $\e$. 

Since \eqref{hyp:smalld} holds for $\delta = \bar \delta$, by \eqref{est:Q-smalld}
\be
M_k(t)^{\frac{1}{(k-2)}} = \bar \delta Q(t,\bar \delta)^{11/3}  \leq C \e^{-44/9}  M_k(t)^{\frac{11}{9(k-2)}} {\bar {\delta}}^{20/9}.
\ee
We rearrange this to find that
$C \e^{11/5} M_k(t)^{-\frac{1}{10(k-2)}} \leq \bar \delta$,
and thus 
\be \label{est:Q-case2-upper}
Q(t,\bar \delta) = (\bar \delta^{-1/2} M_k(t)^{\frac{1}{2(k-2)}})^{6/11} \leq C \e^{-3/5} M_k(t)^{\frac{3}{10(k-2)}}.
\ee

At the same time,  
from the second criterion defining $\bar \delta$ \eqref{def:bard} we have
\be
M_k(t)^{\frac{1}{2(k-2)}} + \e^{-5} (\e^{-3} \wedge M_k(t)^{\frac{1}{2(k-2)}}) \leq Q(t, \bar \delta)^{-1/2} \bar{\delta}^{-1/2} M_k(t)^{\frac{1}{2(k-2)}}
\ee
and hence, substituting $ \bar \delta^{- 1/2} M_k(t)^{\frac{1}{2(k-2)}} = Q(t, \bar \delta)^{11/6}$, we obtain
\be \label{est:Q-case2-lower}
M_k(t)^{\frac{1}{2(k-2)}} + \e^{-5} (\e^{-3} \wedge M_k(t)^{\frac{1}{2(k-2)}}) \leq Q(t, \bar \delta)^{4/3}.
\ee 
We combine this with \eqref{est:Q-case2-upper} to obtain
\be \label{est:Q-case2-UL}
M_k(t)^{\frac{1}{2(k-2)}} + \e^{-5} (\e^{-3} \wedge M_k(t)^{\frac{1}{2(k-2)}}) \leq Q(t, \bar \delta)^{4/3} \leq C \e^{-4/5} M_k(t)^{\frac{2}{5(k-2)}} .
\ee 

If $M_k(t)^{\frac{1}{2(k-2)}} \leq \e^{-3}$, then
\be
\e^{-5} M_k^{\frac{1}{2(k-2)}} \leq Q(t, \bar \delta)^{4/3} \leq C \e^{-4/5} M_k(t)^{\frac{2}{5(k-2)}}.
\ee 
Rearranging gives $\e^{-21}  \leq C M_k(t)^{-\frac{1}{2(k-2)}}$. Since $f(t, \cdot , \cdot)$ has unit mass for all $t$, from definition \eqref{def:Mk} we have $M_k(t) \geq 1$, and hence 
\be \label{est:eps-LB1}
\e^{-21}  \leq C
\ee
must hold for some universal constant $C>0$.

Otherwise, we have $M_k^{\frac{1}{2(k-2)}} > \e^{-3}$, whence
$\e^{-8} + M_k(t)^{\frac{1}{2(k-2)}} \leq Q(t, \bar \delta)^{4/3}$. Hence
\be
\e^{-8/5} M_k(t)^{\frac{2}{5(k-2)}} = (\e^{-8})^{1/5} (M_k(t)^{\frac{1}{2(k-2)}} )^{4/5} \leq Q(t, \delta)^{4/3} \leq C \e^{-4/5} M_k(t)^{\frac{2}{5(k-2)}} ,
\ee
from which it follows that
\be \label{est:eps-LB2}
\e^{-4/5} \leq C,
\ee
for some universal constant $C >0$. 

We may choose $\e_\ast > 0$ depending only on universal constants such that \eqref{est:eps-LB1}-\eqref{est:eps-LB2} are both contradicted for all $\e \in (0, \e_\ast)$. Hence Case 2 cannot occur for this range of $\epsilon$. It follows that, for all $\e \in (0, \e_\ast)$, the lower bound for $\bar \delta$ \eqref{est:12} holds.

Finally, we define
\be
\delta_\ast(t) : = C \e M_k(t)^{\frac{1}{2(k-2)}} (M_k(t)^{\frac{1}{2(k-2)}} + \e^{-5} (\e^{-3} \wedge M_k^{\frac{1}{2(k-2)}}) )^{-3/2} ,
\ee
where $C>0$ is the constant in \eqref{est:12}. We have shown that, for all $\e \leq \e_\ast$, $\bar \delta(t) \geq \delta_\ast(t)$. Hence, since \eqref{hyp:smalld} holds for all $\delta \in (0, \bar \delta(t)]$, it certainly holds for all $\delta \in (0, \delta_\ast(t)] \subset (0, \bar \delta(t)]$. It follows that \eqref{est:Q-smalld-statement} holds for all $\delta \in (0, \delta_\ast(t)]$, as required.
\end{proof}

Next, we wish to prove an estimate for increments $Q(t, t-s)$ where $t-s > \delta_\ast(t)$. To do this, we use the approach from Chen and Chen \cite[Proposition 3.3]{ChenChen}, in which the interval $[s,t]$ is subdivided into subintervals, each of which is small enough that the estimate of Lemma~\ref{lem:Qsmalld} may be applied.
We summarise this part of their estimates, quantified in $\e$, in the following lemma.

\begin{lemma} \label{lem:general-subdivision}
	Let $0\leq a < b$. Then
	\be 
	Q(b, b-a) \leq C \e^{-4/3} \left( b-a
	 \right) \Delta(a,b)^{-2/3}  M_k(b)^{\frac{1}{3(k-2)}} ,
	\ee
where
\be 
\Delta(a,b) : = \inf_{s\in[a,b]} \delta_\ast(s) \wedge (b-a) .
\ee 
\end{lemma}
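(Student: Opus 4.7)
The plan is to subdivide the time interval $[a,b]$ into pieces short enough that Lemma~\ref{lem:Qsmalld} applies on each, and then use a subadditivity property of $Q$ to recombine the local estimates.

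First, I would partition $[a,b]$ into $N$ subintervals of equal length $t_i - t_{i-1} = (b-a)/N$, with $a = t_0 < t_1 < \cdots < t_N = b$ and the choice $N := \lceil (b-a)/\Delta(a,b) \rceil$. By the definition $\Delta(a,b) = \inf_{s \in [a,b]} \delta_\ast(s) \wedge (b-a)$, each subinterval has length at most $\Delta(a,b) \leq \delta_\ast(t_i)$, so Lemma~\ref{lem:Qsmalld} applies at time $t_i$ on a window of length $t_i - t_{i-1}$. Simultaneously, by construction $N \leq 2 (b-a) / \Delta(a,b)$.

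The second step is to exploit that $Q(t,\delta)$ is built from a time integral along a characteristic starting at time $0$, so that the integral splits exactly:
\be
\int_a^b |E(X(s ; 0, x, v))| \di s = \sum_{i=1}^N \int_{t_{i-1}}^{t_i} |E(X(s ; 0, x, v))| \di s.
\ee
Passing to the supremum in $(x,v)$ on the left and majorising the right side by the sum of suprema yields the subadditive inequality
\be
Q(b, b-a) \leq \sum_{i=1}^N Q(t_i, t_i - t_{i-1}).
\ee
Applying Lemma~\ref{lem:Qsmalld} to each summand gives $Q(t_i, t_i - t_{i-1}) \leq C \e^{-4/3} \Delta(a,b)^{1/3} M_k(t_i)^{\frac{1}{3(k-2)}}$. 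I would then replace $M_k(t_i)$ by $M_k(b)$ using the monotonicity of $M_k$ (since the definition \eqref{def:Mk} takes a supremum over $[0,t]$). Summing $N$ identical bounds and inserting $N \leq 2(b-a)/\Delta(a,b)$ produces the announced estimate $C \e^{-4/3} (b-a) \Delta(a,b)^{-2/3} M_k(b)^{\frac{1}{3(k-2)}}$.

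The only subtle point to check is that the definition of $Q$ interacts correctly with the subdivision. This is clean because the trajectory $X(\cdot ; 0, x, v)$ is the same path across every subinterval (all are integrated against the same characteristic starting at time $0$, not reset at $t_{i-1}$), so the decomposition of the integral is exact before taking the supremum and the subadditivity is a direct consequence of taking the sup afterwards. The degenerate case $\Delta(a,b) = b-a$ collapses to $N=1$ and reduces to a single application of Lemma~\ref{lem:Qsmalld}, consistent with the formula, so no separate treatment is required.
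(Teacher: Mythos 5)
Your proof is correct and takes essentially the same route as the paper: partition $[a,b]$ into roughly $(b-a)/\Delta(a,b)$ pieces each short enough for Lemma~\ref{lem:Qsmalld}, observe that $Q(b,b-a) \leq \sum_i Q(t_i, t_i - t_{i-1})$ because all the integrals run along the same characteristic started at time $0$ (so the decomposition is exact before taking the supremum, and subadditivity follows), apply the local estimate on each piece, and sum. The only cosmetic difference is that you use $\lceil (b-a)/\Delta(a,b)\rceil$ equal-length pieces, whereas the paper uses $\lfloor (b-a)/\Delta(a,b)\rfloor$ pieces of length exactly $\Delta(a,b)$ anchored at $b$ plus a shorter remainder near $a$; your version is marginally cleaner since every piece is handled identically.
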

\begin{proof}

Split the time interval $[a, b]$ into subintervals of length no more than $\Delta(a,b)$ (see Figure~\ref{fig:interval-split}): let $n = n(a,b) : = \lfloor \frac{b-a}{\Delta(a,b)} \rfloor$, such that 
\be \label{def:interval-split}
[a, b] = [a, b - n \Delta(a,b)) \cup \bigcup_{j=1}^{n(a,b)} (b - j\Delta(a,b), b - (j-1)\Delta(a,b)].
\ee
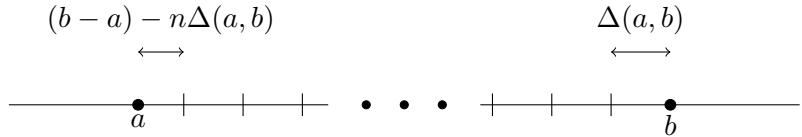
\begin{figure}[h] 
\centering
\begin{tikzpicture}
    \draw (-5.2,0) -- (-1,0);
    \draw (1,0) -- (5.2,0);
    
    \filldraw (-3.5,0) circle (2pt) node[below] {$a$};
    
    \draw[<->] (-3.5,0.7) -- (-2.9,0.7);
    \node[above] at (-3.2,0.8) {$(b-a)-n\Delta(a,b)$};
    
    \foreach \x in {-2.9,-2.12,-1.34} {
    \draw (\x,-0.15) -- (\x,0.15);
    }

    \filldraw (-0.5,0) circle (1.5pt) node[below] {};
    \filldraw (0,0) circle (1.5pt) node[below] {};
    \filldraw (0.5,0) circle (1.5pt) node[below] {};

    \foreach \x in {1.16, 1.94,2.72} {
    \draw (\x,0.15) -- (\x,-0.15);
    }    

    \draw[<->] (2.72,0.7) -- (3.5,0.7);
    \node[above] at (3.11,0.8) {$\Delta(a,b)$};
    
    \filldraw (3.5,0) circle (2pt) node[below] {$b$};
\end{tikzpicture}
\caption{ Decomposition of the interval $[a,b]$ into subintervals of length no more than $\Delta(a,b)$.}
\label{fig:interval-split}
\end{figure}
Then, by splitting the integral defining $Q(b,b-a)$ according to the regions \eqref{def:interval-split}, we find that
\be
Q(b, b-a) \leq Q(b - n \Delta(a,b), b - n \Delta(a,b)) + \sum_{j=1}^{n} Q( b - (j-1) \Delta(a,b), \Delta(a,b)) .
\ee
Now estimate each summand using \eqref{est:Q-smalld}:
\be
Q(b, b-a) \leq C \e^{-4/3} \Delta(a,b)^{1/3} \sum_{j=0}^{n} M_k(b - j \Delta(a,b) )^{\frac{1}{3(k-2)}} .
\ee
Since $M_k(s)$ is a non-decreasing function of $s$, $M_k(b - j \Delta(a,b)) \leq M_k(b)$ for all $j$ and hence
\be 
Q(b, b-a) \leq C (n(a,b) + 1) \e^{-4/3} \Delta(a,b)^{1/3} M_k(b)^{\frac{1}{3(k-2)}} .
\ee
Since $n(a,b) \leq (b-a)\Delta(a,b)^{-1}$, this completes the proof.
\end{proof}

Our next step is to apply the previous result in order to estimate $Q(t,t)$. To do so we need to estimate the infimum of $\delta_\ast$.
We begin by writing 
\be
\delta_\ast(s) = h_\e(M_k(s)^{\frac{1}{2(k-2)}})
\ee
where
\be \label{def:heps}
h_\e(z) : = \begin{cases}
 C \e (1 + \e ^{-5})^{-3/2}z^{-1/2}	& z \leq \e ^{-3} \\
 C \e z(z + \e ^{-8})^{-3/2} & z > \e ^{-3} . 
 \end{cases}
\ee

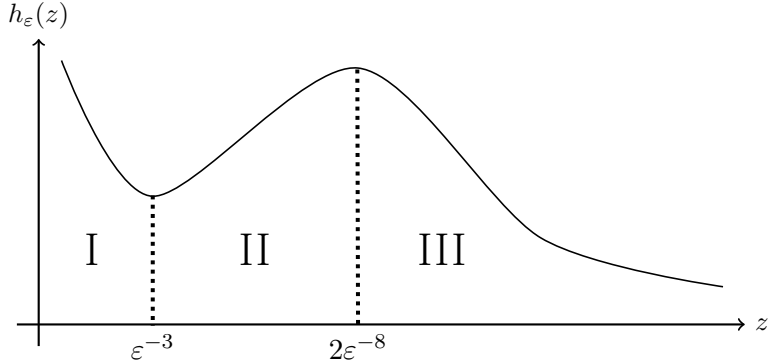
\begin{figure}[h] 
\centering
\begin{tikzpicture}
    \datavisualization [school book axes, visualize as smooth line, x axis={label=$z$, ticks={none}}, y axis={label=$h_\e(z)$, ticks={none}}, style={thick}]
    data {
      x, y
      0.3, 3.5
      1.5, 1.7
      4.2, 3.4
      6.6, 1.15
      9, 0.5
    };

    \node at (1.5, 0) [below] {$\e^{-3}$};
    \node at (4.2, 0) [below] {$2\e^{-8}$};

    \node at (0.7, 1) {\LARGE{I}};
    \node at (2.84, 1) {\LARGE{II}};
    \node at (5.3, 1) {\LARGE{III}};

    \node at (1.5, 1.98) (x1) [below] {};
    \node at (4.19, 3.65) (x2) [below] {};

    \draw[dotted, line width=1.5pt] (x1) -- (1.5, -0.02);
    \draw[dotted, line width=1.5pt] (x2) -- (4.2, -0.05);
\end{tikzpicture}
\caption{The function $h_\e(z)$, showing its monotonicity properties for $z$ in different regions.}
\label{fig:heps}
\end{figure}

Observe (see Figure~\ref{fig:heps}) that $h_\e$ is a decreasing function for $z \in [0, \e ^{-3} \cup (2\e ^{-8}, +\infty)$ and increasing for $z \in (\e ^{-3}, 2\e ^{-8})$.
Since $M_k(s)$ is a non-decreasing function of $s$, we may identify corresponding time intervals of monotonicity for $\delta_\ast(s)$. Let
\begin{align}
	t_I &:= \inf \{ s \geq 0 : M_k(s)^{\frac{1}{2(k-2)}} > \e^{-3} \} \\
	t_{II} &:= \inf \{ s \geq 0 : M_k(s)^{\frac{1}{2(k-2)}} > 2 \e^{-8} \} .
\end{align}
Then $\delta_\ast(s)$ is a non-increasing function of $s$ for $s \in [0, t_I) \cup (t_{II}, + \infty)$ and a non-decreasing function of $s$ for $s \in (t_I, t_{II})$.

We therefore see a difference between the non-increasing regions $[0, t_I)$ and $[t_{II}, + \infty)$ and the non-decreasing region $[t_I, t_{II})$: in the non-increasing regions, the infimum of $\delta_\ast$ of any subinterval is attained at the right hand endpoint of the interval, whereas in the non-decreasing region the infimum is attained at the left hand endpoint.
Consequently, we will use different methods to estimate $Q$ depending on whether we consider a non-increasing or non-decreasing region.

\begin{remark}
At this point it is instructive to compare the corresponding function in the electron Vlasov--Poisson case, which is 
\be
\delta_\ast(s) = C \e M_k(s)^{-\frac{1}{4(k-2)}}.
\ee
This is a non-increasing function of $s$ for \emph{all} $s$. In the ion case, we will follow the argument of \cite{ChenChen} in the non-increasing regions (Lemma~\ref{lem:I-III}), as this is suited to the non-increasing case. In the non-decreasing region, however, we will develop a new argument (see Lemma~\ref{lem:II}).
\end{remark}

In Regions I and III we follow the method of \cite{ChenChen} and deduce the following estimate by a direct application of Lemma~\ref{lem:general-subdivision}.

\begin{lemma}[Estimates on $Q$, Regions I and III]
 \label{lem:I-III}
For all $t \leq t_I$,
	\be
	Q(t , t ) \leq C \e^{-7}  t  M_k(t )^{\frac{1}{2(k-2)}} .
	\ee 
For all $t > t_{II}$,
	\be
	Q(t, t - t_{II}) \leq C \e^{-2} \left( t - t_{II}
	 \right) M_k(t)^{\frac{1}{2(k-2)}} .
	\ee 

\end{lemma}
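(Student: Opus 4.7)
My plan is to derive both estimates as direct applications of Lemma~\ref{lem:general-subdivision}. The crucial point is that in Regions~I and~III the function $s\mapsto\delta_\ast(s)$ is non-increasing, so $\inf_{s\in[a,b]}\delta_\ast(s)=\delta_\ast(b)$, which produces an explicit lower bound on $\Delta(a,b)$ in terms of $M_k(b)$ and $\e$.

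For Region~I, I use that $M_k(s)^{1/(2(k-2))}\le\e^{-3}$ on $[0,t_I]$ together with the first branch of \eqref{def:heps} and the elementary bound $(1+\e^{-5})^{-3/2}\ge c\,\e^{15/2}$, valid for $\e\le 1$, to obtain $\delta_\ast(s)\ge c\,\e^{17/2}M_k(s)^{-1/(4(k-2))}$. Since $h_\e$ is decreasing on $[0,\e^{-3}]$ and $M_k$ is non-decreasing, $\delta_\ast$ is itself non-increasing on $[0,t_I]$, so $\inf_{[0,t]}\delta_\ast=\delta_\ast(t)$. Substituting this lower bound into Lemma~\ref{lem:general-subdivision} with $(a,b)=(0,t)$, the arithmetic of the exponents is immediate: the power of $\e$ is $-\tfrac{4}{3}-\tfrac{2}{3}\cdot\tfrac{17}{2}=-7$, while the exponent of $M_k(t)$ becomes $\tfrac{1}{3(k-2)}+\tfrac{1}{6(k-2)}=\tfrac{1}{2(k-2)}$, giving the desired bound.

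Region~III will be handled identically. On $[t_{II},t]$ one has $M_k(s)^{1/(2(k-2))}\ge 2\e^{-8}$, hence $M_k(s)^{1/(2(k-2))}+\e^{-8}\le\tfrac32 M_k(s)^{1/(2(k-2))}$, and the second branch of \eqref{def:heps} yields $\delta_\ast(s)\ge c\,\e\,M_k(s)^{-1/(4(k-2))}$. Since $h_\e$ is decreasing on $(2\e^{-8},\infty)$, $\delta_\ast$ is again non-increasing on $[t_{II},t]$, and $\inf\delta_\ast=\delta_\ast(t)$. Applying Lemma~\ref{lem:general-subdivision} with $(a,b)=(t_{II},t)$ now produces an $\e$-exponent of $-\tfrac{4}{3}-\tfrac{2}{3}=-2$ and the same moment exponent $\tfrac{1}{2(k-2)}$, as claimed.

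The only point requiring care will be the $\wedge(b-a)$ appearing in the definition $\Delta(a,b)=\inf\delta_\ast\wedge(b-a)$: the substitution $\Delta\ge c\,\delta_\ast(b)$ is only effective when $\delta_\ast(b)\le b-a$, i.e.\ when the subdivision step in Lemma~\ref{lem:general-subdivision} is genuinely used. In the opposite regime, a single direct application of Lemma~\ref{lem:Qsmalld} already delivers a strictly sharper bound, which is more than adequate for the downstream estimates of Proposition~\ref{prop:mass-growth}. Beyond this minor bookkeeping point I do not expect any conceptual obstacle; the entire argument reduces to tracking the polynomial dependence on $\e$ inherited from the explicit form of $\delta_\ast$.
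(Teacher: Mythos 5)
Your proposal is correct and follows the paper's argument essentially verbatim: both proofs apply Lemma~\ref{lem:general-subdivision} on the interval $[0,t]$ (Region~I) or $[t_{II},t]$ (Region~III), use the monotonicity of $h_\e$ together with that of $M_k$ to identify $\inf\delta_\ast$ at the right endpoint, substitute the appropriate branch of \eqref{def:heps}, and track the resulting powers of $\e$ and $M_k$. Your exponent arithmetic ($-4/3 - \tfrac{2}{3}\cdot\tfrac{17}{2}=-7$ and $-4/3-2/3=-2$; moment exponent $\tfrac{1}{3(k-2)}+\tfrac{1}{6(k-2)}=\tfrac{1}{2(k-2)}$) matches the paper exactly. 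One small caveat on your final bookkeeping remark: it is right to flag the $\wedge(b-a)$ in $\Delta(a,b)$, but calling the direct estimate from Lemma~\ref{lem:Qsmalld} ``strictly sharper'' in the regime $\delta_\ast(b)>b-a$ is not quite accurate, since for very small $b-a$ the direct bound scales as $(b-a)^{1/3}$ while the lemma's stated bound scales as $(b-a)$, so the direct bound can actually be larger; what is true, and what you correctly emphasize, is that this regime is harmless for the downstream integral estimate (and indeed the paper glosses over it entirely, so your proposal is if anything more careful on this point).
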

\begin{proof}
Note that $\delta(s)$ is non-increasing for all $s \in [0, t_I)$ and all $s > t_{II}$. Thus, if $t \leq t_I$, the infimal value over $[0,t]$ is realised at the upper endpoint of the interval, $s = t$, and
\be \label{est:Delta0t}
	\Delta(0 , t ) = \delta_\ast(t ) \wedge t 
 = h_\e(M_k(t )^{\frac{1}{2(k-2)}}) \wedge t .
\ee
For $t > t_{II}$, an identical argument shows that
\be \label{est:DeltatII}
\Delta(t_{II}, t ) = \delta_\ast(t) \wedge (t - t_{II}).
\ee

	By definition of $t_I$, $M_k(s)^{\frac{1}{2(k-2)}} \leq \e^{-3}$ for all $s \leq t_I$; in particular, when $t \leq t_I$ we may substitute the definition of $h_\e$ \eqref{def:heps} for $z\leq\e^{-3}$ to rewrite \eqref{est:Delta0t} as
	\be 
	\Delta(0, t) = C \e (1 + \e ^{-5})^{-3/2} M_k(t)^{- \frac{1}{4(k-2)}} \wedge t .
	\ee 
Then, by Lemma~\ref{lem:general-subdivision},
	\be
	Q(t, t) \leq C \e^{-2} (1 + \e ^{-5}) t
	  M_k(t )^{\frac{1}{2(k-2)}}  \leq C \e^{-7}  t
	  M_k(t )^{\frac{1}{2(k-2)}}  .
	\ee 
	
For $t > t_{II}$, by definition of $t_{II}$ we have $M_k(t)^{\frac{1}{2(k-2)}} \geq 2 \e^{-8}$. Then, substituting the definition of $h_\e$ \eqref{def:heps} for $z \geq 2\e^{-8}$ into \eqref{est:DeltatII} gives
	\begin{align}
	\Delta(t_{II}, t)  =  C \e M_k(t)^{\frac{1}{2(k-2)}}(M_k(t )^{\frac{1}{2(k-2)}} + \e ^{-8})^{-3/2} 
	 \geq C \e M_k(t )^{- \frac{1}{4(k-2)}} ,
	\end{align} 
and by Lemma~\ref{lem:general-subdivision},
	\be
	Q(t_{II}, t - t_{II}) \leq C \e^{-2} \left( t - t_{II}
	 \right) M_k(t)^{\frac{1}{2(k-2)}} .
	\ee 
\end{proof}

In Region II, the argument in Lemma~\ref{lem:I-III} would give
\begin{align}
\Delta(t_I, t ) = \delta_\ast(t_I) = h_\e(M_k(t_I)^{\frac{1}{2(k-2)}}) = h(\e^{-3}) = C (1 + \e ^{5})^{-3/2}\e^{10}, \quad t \in [t_I, t_{II}],
\end{align} 
and thus we would find the estimate
\be 
Q(t , t  - t_I) \leq C \e^{-8}  (t  - t_I) M_k(t )^{\frac{1}{3(k-2)}} .
\ee 
At $t=t_{II}$ this implies that
\be 
Q(t_{II} , t_{II} - t_I) \leq C \e^{-40/3}  ( t_{II} - t_I) .
\ee 
In the following lemma, we show that we may in fact obtain an improved estimate of order $\e^{-10}$, by using a further subdivision of the interval $[t_I, t_{II}]$. This is a key difference in our proof from the method of Chen and Chen \cite{ChenChen} for the electron model.

\begin{lemma}[Region II] \label{lem:II}
Let $t \in (t_I, t_{II}]$. Then
	\be 
	Q(t , t - t_{I}) \leq C \e^{-10} (t  - t_I) + C \e^3 M_k(t)^{\frac{1}{2(k-2)}} .
	\ee 
\end{lemma}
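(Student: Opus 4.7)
The strategy is to exploit that, in Region II, $\delta_\ast(s) = h_\e\bigl(M_k(s)^{1/(2(k-2))}\bigr)$ is non-decreasing in $s$, ranging from $\sim \e^{10}$ at $s=t_I$ up to $\sim \e^5$ near $s=t_{II}$. Using the worst-case value $\delta_\ast(t_I)$ throughout $[t_I,t]$, as in Lemma~\ref{lem:I-III}, would be wasteful and yield only $\e^{-40/3}$ rather than $\e^{-10}$. The remedy is to partition $[t_I,t]$ adaptively using geometric levels of $M_k^{1/(2(k-2))}$, and to apply Lemma~\ref{lem:general-subdivision} separately on each piece, where $\delta_\ast$ is essentially constant.

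Concretely, I would set $z_j := 2^j \e^{-3}$ and choose stopping times $\tau_0 := t_I$, $\tau_J := t$ (with $J$ minimal such that $z_J \geq M_k(t)^{1/(2(k-2))}$), and intermediate times $\tau_j := \inf\{s \geq t_I : M_k(s)^{1/(2(k-2))} \geq z_j\}$. Because $M_k$ is non-decreasing and $h_\e$ is non-decreasing on Region II, one has $\inf_{s\in[\tau_{j-1},\tau_j]}\delta_\ast(s) = \delta_\ast(\tau_{j-1}) = h_\e(z_{j-1})$. Splitting the defining supremum of $Q$ yields $Q(t, t-t_I) \leq \sum_{j=1}^J Q(\tau_j, \tau_j - \tau_{j-1})$, and the scaling $h_\e(z) \sim \e^{13} z$ valid throughout Region II will drive all subsequent estimates.

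For each $[\tau_{j-1}, \tau_j]$ I would distinguish two cases. In the \textbf{long case} $\tau_j - \tau_{j-1} \geq h_\e(z_{j-1})$, Lemma~\ref{lem:general-subdivision} with $\Delta = h_\e(z_{j-1})$ gives
\[
Q(\tau_j, \tau_j - \tau_{j-1}) \leq C \e^{-4/3} (\tau_j - \tau_{j-1})\, h_\e(z_{j-1})^{-2/3}\, z_j^{2/3} \leq C \e^{-10}(\tau_j - \tau_{j-1}),
\]
where the final $\e^{-10}$ emerges on combining $\e^{-4/3}$ with the factor $\e^{-26/3}z_{j-1}^{-2/3}$ coming from $h_\e(z_{j-1})^{-2/3}$, and using the doubling choice $z_j = 2z_{j-1}$ to bound $(z_j/z_{j-1})^{2/3}$. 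In the \textbf{short case} $\tau_j - \tau_{j-1} < h_\e(z_{j-1}) \leq h_\e(z_j) = \delta_\ast(\tau_j)$, Lemma~\ref{lem:Qsmalld} applies at time $\tau_j$ with $\delta = \tau_j - \tau_{j-1}$ and gives
\[
Q(\tau_j, \tau_j - \tau_{j-1}) \leq C \e^{-4/3} (\tau_j - \tau_{j-1})^{1/3} z_j^{2/3} \leq C \e^{-4/3} \bigl(\e^{13} z_{j-1}\bigr)^{1/3} z_j^{2/3} \leq C \e^3 z_j.
\]

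Summing the long-case contributions gives at most $C\e^{-10}(t - t_I)$, while the short-case contributions telescope geometrically, $\sum_j z_j \leq 2 z_J \leq 4\, M_k(t)^{1/(2(k-2))}$, so their total is at most $C\e^3 M_k(t)^{1/(2(k-2))}$. Adding the two yields the stated bound. The main obstacle is the parameter balancing: the doubling ratio must simultaneously make $(z_j/z_{j-1})^{2/3}$ a bounded constant (controlling the long-case prefactor) \emph{and} keep $\sum_j z_j$ geometric (controlling the short-case total); the value $2$ achieves both. A minor technicality---that $M_k$ may fail to be continuous---is handled exactly as elsewhere in this section, by running the argument on the regularized solutions of \cite{GPI-WP}, for which $M_k$ is smooth, and passing to the limit via the uniformity of the estimates in the regularization parameter.
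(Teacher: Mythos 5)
Your proof is correct and follows essentially the same approach as the paper: the same geometric doubling decomposition of $[t_I, t]$ via the stopping times $\tau_j$, the same two-case dichotomy per subinterval (whether $\tau_j - \tau_{j-1}$ exceeds $\delta_\ast(\tau_{j-1})$ or not), and the same summation where the long-case terms control the $\e^{-10}(t-t_I)$ contribution and the short-case terms sum geometrically to $C\e^3 M_k(t)^{1/(2(k-2))}$. The only cosmetic differences are that you track the levels $z_j = 2^j\e^{-3}$ explicitly rather than $M_k(\tau_j)^{1/(2(k-2))}$, and that you invoke Lemma~\ref{lem:Qsmalld} directly in the short case where the paper applies Lemma~\ref{lem:general-subdivision} with $\Delta = \tau_j - \tau_{j-1}$; these are equivalent.
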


\begin{remark}
Recall that $M_k(t)^{\frac{1}{2(k-2)}}  \leq \e^{-8}$ for $t \leq t_{II}$, so that the second term is of lower order in $\e^{-1}$:
	\be 
	Q(t , t - t_{I}) \leq C \e^{-10} (t  - t_I) + C \e^{-5} \leq C \e^{-10} (1 + t  - t_I) .
	\ee 
\end{remark}

\begin{proof}

Consider the following subdivision of the interval $[t_{I}, t ]$: first, let $\tau_0 := t_{I}$; then, for $j \geq 1$, let 
\be \label{def:tau_j}
\tau_j := \inf \{ s > \tau_{j-1} : M_k(s)^{\frac{1}{2(k-2)}} > 2 M_k(\tau_{j-1})^{\frac{1}{2(k-2)}} \} .
\ee 
Since $M_k(s)$ is a continuous function of $s$, we have $M_k(\tau_j)^{\frac{1}{2(k-2)}} = 2 M_k(\tau_{j-1})^{\frac{1}{2(k-2)}} = 2^j M_k(\tau_0)^{\frac{1}{2(k-2)}}$ for all $j$.
Then let
\be 
J(t) : = \inf \{ j \geq 0 : \tau_j \geq t \} ;
\ee 
note that $J(t)$ is finite with
\be
J(t) \leq 1 + \frac{\log{M_k(t)^{\frac{1}{2(k-2)}}} - \log{M_k(t_I)^{\frac{1}{2(k-2)}}}}{\log 2} \leq 1 + \frac{\log{(\e^3 M_k(t)^{\frac{1}{2(k-2)}}} ) }{\log 2} ,
\ee
since
\be
2^{J(t) -1} \e^{-3} = 2^{J(t) -1} M_k( t_{I} )^{\frac{1}{2(k-2)}} = M_k( \tau_{J(t)-1} )^{\frac{1}{2(k-2)}} \leq M_k(t)^{\frac{1}{2(k-2)}} .
\ee
For convenience we then redefine $\tau_{J(t)} = t $.
Thus $[t_I, t]$ may be written as the following (almost disjoint) union of intervals (see Figure~\ref{fig:tau}):
\be 
[t_{I}, t ] = \bigcup_{j=1}^{J(t)} [\tau_{j-1}, \tau_j].
\ee 

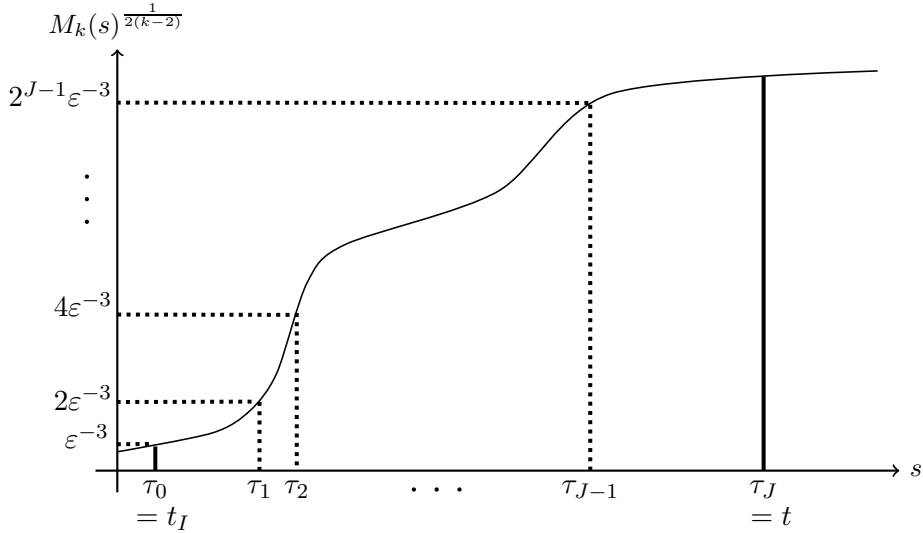
\begin{figure}[h] 
\centering
\begin{tikzpicture}
    \datavisualization [school book axes, visualize as smooth line, x axis={label=$s$, ticks={none}}, y axis={label=$M_k(s)^{\frac{1}{2(k-2)}}$, ticks={none}}, style={thick}]
    data {
      x, y
      0, 0.25
      1.25, 0.5
      1.75, 0.8
      2.1, 1.3
      2.5, 2.5
      3, 3
      5, 3.7
      6.5, 5
      10, 5.3
    };
    
    \node at (0.5, 0) [below] {$\tau_0$};
    \node at (1.87, 0) [below] {$\tau_1$};
    \node at (2.36, 0) [below] {$\tau_2$};
    \node at (6.22, 0) [below] {$\tau_{J-1}$};
    \node at (8.5, 0) [below] {$\tau_J$};
    \node at (0.6, -0.35) [below] {$=t_I$};
    \node at (8.6, -0.35) [below] {$= t$};
    
        \node at (-0.4, 0.75) [below] {$\e^{-3}$};
    \node at (-0.45, 1.26) [below] {$2 \e^{-3}$};
    \node at (-0.45, 2.5) [below] {$4 \e^{-3}$};
    \node at (-0.75, 5.3) [below] {$2^{J-1} \e^{-3}$};

    \node at (0.5, 0.59) (x11) [below] {};
    \node at (0.55, 0.49) (x12) [below] {};
    \node at (1.87, 1.18) (x21) [below] {};
    \node at (1.9, 1.05) (x22) [below] {};
    \node at (2.36, 2.36) (x31) [below] {};
    \node at (2.4, 2.2) (x32) [below] {};
    \node at (2.3, 1.5) [below] {};
    \node at (6.22, 5.12) (x41) [below] {};
    \node at (6.33, 5.01) (x42) [below] {};
    \node at (8.5, 5.5) (x5) [below] {};
    
    \draw[line width=1.5pt] (x11) -- (0.5, 0);
    \draw[dotted, line width=1.5pt] (x12) -- (-0.03, 0.35);
    \draw[dotted, line width=1.5pt] (x21) -- (1.87, 0);
    \draw[dotted, line width=1.5pt] (x22) -- (-0.03, 0.91);
    \draw[dotted, line width=1.5pt] (x31) -- (2.36,0);
    \draw[dotted, line width=1.5pt] (x32) -- (0, 2.07);
    \draw[dotted, line width=1.5pt] (x41) -- (6.22,0);
    \draw[dotted, line width=1.5pt] (x42) -- (0,4.88);
    \draw[line width=1.5pt] (x5) -- (8.5, 0);
    
    \filldraw (3.9,-0.25) circle (0.6pt) node[below] {};
    \filldraw (4.2,-0.25) circle (0.6pt) node[below] {};
    \filldraw (4.5,-0.25) circle (0.6pt) node[below] {};
    
    \filldraw (-0.4,3.3) circle (0.6pt) node[below] {};
    \filldraw (-0.4,3.6) circle (0.6pt) node[below] {};
    \filldraw (-0.4,3.9) circle (0.6pt) node[below] {};
\end{tikzpicture}
\caption{Construction of the times $(\tau_i)_{i=0}^J$.}
\label{fig:tau}
\end{figure}
We will now apply Lemma~\ref{lem:general-subdivision} on each subinterval $[\tau_{j-1}, \tau_j]$.
Since $\delta_\ast (s)$ is increasing for $s\in (t_{I}, t_{II})$, its minimal value is found at the left endpoint of any subinterval, i.e.
\begin{align}
\Delta(\tau_{j-1}, \tau_j ) &= \delta_\ast(\tau_{j-1}) \\
&= C \e M_k(\tau)^{\frac{1}{2(k-2)}} (M_k(\tau)^{\frac{1}{2(k-2)}} + \e^{-8} )^{-3/2} \wedge (\tau_j - \tau_{j-1}) \\
& \geq C \e^{13} M_k(\tau_{j-1})^{\frac{1}{2(k-2)}} \wedge (\tau_j - \tau_{j-1})
\end{align}
Then, by Lemma~\ref{lem:general-subdivision},
	\be \label{est:Q-M-cases}
	Q(\tau_j, \tau_j - \tau_{j-1}) \leq
	\begin{cases}
		C \e^{-10} (\tau_j - \tau_{j-1}) \left( \frac{M_k(\tau_j)}{M_k(\tau_{j-1})} \right )^{\frac{1}{3(k-2)}} & \text{if} \; \;\Delta(\tau_{j-1}, \tau_j ) = C \e^{13} M_k(\tau_{j-1})^{\frac{1}{2(k-2)}}, \; \text{or} \\
		C \e^{-4/3} (\tau_j - \tau_{j-1})^{1/3} M_k(\tau_j)^{\frac{1}{3(k-2)}}  & \text{if} \; \; \Delta(\tau_{j-1}, \tau_j ) = \tau_j - \tau_{j-1} .
	\end{cases}
	\ee
In the second case, $\tau_j - \tau_{j-1} \leq C \e^{13} M_k(\tau_{j-1})^{\frac{1}{2(k-2)}}$, and so
\be
 Q(\tau_j, \tau_j - \tau_{j-1}) \leq C \e^{3} M_k(\tau_{j-1})^{\frac{1}{2(k-2)}} \left( \frac{M_k(\tau_j)}{M_k(\tau_{j-1})} \right )^{\frac{1}{3(k-2)}}  .
\ee
Now recall that, by definition of the $\tau_j$ \eqref{def:tau_j}, $ \frac{M_k(\tau_j)}{M_k(\tau_{j-1})} = 2$. By summing the two cases of \eqref{est:Q-M-cases} we have
\be
	Q(\tau_j, \tau_j - \tau_{j-1}) \leq C 2^{\frac{1}{3(k-2)}} \e^{-10} (\tau_j - \tau_{j-1} ) + C 2^{\frac{1}{3(k-2)}}  \e^{3} M_k(\tau_{j-1})^{\frac{1}{2(k-2)}}.
\ee
Since $M_k(\tau_{j-1}) = 2^{j-1} M_k(\tau_0)$
for $j \leq J(t)-1$ and $M_k(\tau_{J(t)-1}) \leq M_k(t)$,
\be
Q(\tau_j, \tau_j - \tau_{j-1}) \leq \begin{cases}
C 2^{\frac{1}{3(k-2)}} \e^{-10} (\tau_j - \tau_{j-1} ) + C  \e^{3}  2^{\frac{1}{3(k-2)} + \frac{j-1}{2(k-2)}} M_k(\tau_0)^{\frac{1}{2(k-2)}} & j \leq J(t) -1, \\
 	C 2^{\frac{1}{3(k-2)}} \e^{-10} (\tau_j - \tau_{j-1} ) + C  \e^{3} M_k(t)^{\frac{1}{2(k-2)}} &  j = J(t) .
 \end{cases}
\ee
By summing over the subintervals, we obtain the estimate
\begin{align}
Q(t_I, t) & \leq \sum_{j=1}^{J(t)} Q(\tau_j, \tau_j - \tau_{j-1}) \\
& \leq C 2^{\frac{1}{3(k-2)}} \e^{-10} (t - t_I) + C \e^{3}  2^{\frac{1}{3(k-2)}} \frac{2^{\frac{J(t)-1}{2(k-2)}} - 1}{2^{\frac{1}{2(k-2)}}-1} M_k(t_I)^{\frac{1}{2(k-2)}} + C  \e^{3} M_k(t)^{\frac{1}{2(k-2)}}.
\end{align}
Then, using the fact that $2^{\frac{J(t)-1}{2(k-2)}} M_k(t_I)^{\frac{1}{2(k-2)}} = M_k(\tau_{J(t)-1})^{\frac{1}{2(k-2)}} \leq M_k(t)^{\frac{1}{2(k-2)}}$, we find that
\be
Q(t_I, t) \leq C 2^{\frac{1}{3(k-2)}} \left (  \e^{-10} (t - t_I) + C \e^{3} \frac{2^{\frac{1}{2(k-2)}}}{2^{\frac{1}{2(k-2)}}-1} M_k(t)^{\frac{1}{2(k-2)}} \right ) ,
\ee
which completes the proof.
\end{proof}

We combine the previous results to obtain an estimate on $Q(t,t)$.

\begin{lemma} \label{lem:Q-Mk}
For all $t \geq 0$,
\be
Q(t,t) \leq C \e^{-2} (1 + t) \left ( M_k(t)^{\frac{1}{2(k-2)}} \vee \e^{-8} \right ) .
\ee
\end{lemma}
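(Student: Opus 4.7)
The plan is to combine the three regional estimates of Lemmas~\ref{lem:I-III} and \ref{lem:II} by exploiting the natural subadditivity of $Q(t,\cdot)$ under splitting of the time interval. Directly from the definition of $Q$, for any $0\leq a \leq b \leq t$ we have
\be
Q(t, t-a) \leq Q(b, b-a) + Q(t, t-b),
\ee
since the integral defining $Q(t, t-a)$ splits at $b$ and taking the supremum in $(x,v)$ inside each piece only enlarges the quantity. Iterating this inequality allows us to decompose $[0,t]$ using the thresholds $t_I$ and $t_{II}$ and treat each subinterval with the appropriate estimate.

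The argument then breaks into the three natural cases determined by where $t$ falls relative to $t_I$ and $t_{II}$. If $t \leq t_I$, then Lemma~\ref{lem:I-III} yields
$Q(t,t) \leq C\e^{-7} t\, M_k(t)^{1/(2(k-2))}$, and since $M_k(t)^{1/(2(k-2))}\leq \e^{-3}$ in this range the right-hand side is bounded by $C\e^{-2}(1+t)\,\e^{-8}$. If $t_I < t \leq t_{II}$, I would write $Q(t,t) \leq Q(t_I,t_I) + Q(t, t-t_I)$ and apply Lemma~\ref{lem:I-III} to the first term and Lemma~\ref{lem:II} to the second; using $M_k(t)^{1/(2(k-2))} \leq 2\e^{-8}$ throughout this range, both contributions merge into a bound of the form $C\e^{-10}(1+t)$. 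Finally, for $t > t_{II}$ I would use the three-piece decomposition
\be
Q(t,t) \leq Q(t_I, t_I) + Q(t_{II}, t_{II}-t_I) + Q(t, t-t_{II}),
\ee
estimating the three terms by Lemma~\ref{lem:I-III} (first case), Lemma~\ref{lem:II}, and Lemma~\ref{lem:I-III} (second case) respectively.

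In the third case the first two summands are bounded by $C\e^{-10}(1+t_{II})$, while the third is bounded by $C\e^{-2}(t-t_{II}) M_k(t)^{1/(2(k-2))}$. Observing that for $t > t_{II}$ we have $M_k(t)^{1/(2(k-2))} \geq 2\e^{-8}$, so $M_k(t)^{1/(2(k-2))}\vee \e^{-8} = M_k(t)^{1/(2(k-2))}$, all three contributions can be absorbed into $C\e^{-2}(1+t)(M_k(t)^{1/(2(k-2))}\vee \e^{-8})$, which is the desired bound. The only real subtlety is bookkeeping: one must carefully use the defining inequalities $M_k(t)^{1/(2(k-2))}\leq \e^{-3}$ on $[0,t_I]$ and $M_k(t)^{1/(2(k-2))}\leq 2\e^{-8}$ on $[0,t_{II}]$ to convert the seemingly worse exponents $\e^{-7}$ and $\e^3$ appearing in Lemmas~\ref{lem:I-III} and \ref{lem:II} into the uniform factor $\e^{-2}(M_k(t)^{1/(2(k-2))}\vee \e^{-8})$. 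I do not expect any genuine obstacle beyond this bookkeeping, since the hard analytic work has already been done in establishing the regional estimates.
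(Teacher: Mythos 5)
Your proposal is correct and matches the paper's proof essentially step for step: the same subadditivity of $Q$ under time splitting, the same three regional cases keyed to $t_I$ and $t_{II}$, and the same bookkeeping conversions using $M_k(t)^{1/(2(k-2))}\leq \e^{-3}$ on $[0,t_I]$ and $\leq 2\e^{-8}$ on $[0,t_{II}]$. The only cosmetic difference is that in the third case the paper reuses the already-established bound on $Q(t_{II},t_{II})$ rather than unrolling it into a three-piece decomposition, which is the same computation.
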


\begin{proof}

In the case $t \leq t_I$, we simply apply Lemma~\ref{lem:I-III} to obtain
\be
	Q(t , t ) \leq C \e^{-7}  t  M_k(t )^{\frac{1}{2(k-2)}}  \leq C \e^{-10} t \leq C \e^{-2} t \left ( M_k(t)^{\frac{1}{2(k-2)}} \vee \e^{-8} \right ),
\ee
since $M_k(t )^{\frac{1}{2(k-2)}} \leq \e^{-3}$.

In the case $t \in (t_I, t_{II}]$,
\be
Q(t,t) \leq Q(t_I, t_I) + Q(t, t- t_I) .
\ee
We bound the first term using Lemma~\ref{lem:I-III} and the fact that $M_k(t_I )^{\frac{1}{2(k-2)}} = \e^{-3}$ by definition:
\be
Q(t_I, t_I) \leq C \e^{-10}  t_I .
\ee
We combine this with the bound on the second term given by Lemma~\ref{lem:II} to obtain
\begin{align}
Q(t,t) & \leq C \e^{-10}  t_I  + C \e^{-10} (t  - t_I) + C \e^3 M_k(t)^{\frac{1}{2(k-2)}} \\
& \leq C \e^{-10}  t + C \e^3 M_k(t)^{\frac{1}{2(k-2)}}  \label{est:QII} \\
& \leq C \e^{-10} (t + \e^5) ,
\end{align}
since $M_k(t)^{\frac{1}{2(k-2)}} \leq 2 \e^{-8}$. We conclude that
\be \label{est:QI-II}
Q(t,t) \leq C \e^{-10}(t + 1) \leq C \e^{-2} (1 + t) \left ( M_k(t)^{\frac{1}{2(k-2)}} \vee \e^{-8} \right ) , \qquad \text{for all} \; t \in [0,t_{II}] .
\ee
In the case $t > t_{II}$, we first write
\be
Q(t,t) \leq Q(t_{II}, t_{II}) + Q(t, t- t_{II}) .
\ee
By \eqref{est:QII} and Lemma~\ref{lem:I-III}, we have the estimate
\begin{align}
Q(t,t) & \leq C \e^{-10}  (1 + t_{II}) + C \e^{-2} \left( t - t_{II}
	 \right) M_k(t)^{\frac{1}{2(k-2)}} \\
	 & \leq C \e^{-2} \left ( \e^{-8} (1+ t_{II} ) +  \left( t - t_{II}
	 \right) M_k(t)^{\frac{1}{2(k-2)}} \right ) .
\end{align}
Then
\begin{align}
Q(t,t)	 & \leq C \e^{-2} \left ( M_k(t)^{\frac{1}{2(k-2)}} \vee \e^{-8} \right ) \left ( (1 +  t_{II} )  +  ( t - t_{II})  \right ) \\
	 & \leq C \e^{-2} (1 + t) \left ( M_k(t)^{\frac{1}{2(k-2)}} \vee \e^{-8} \right ) .
\end{align}
Combining this with \eqref{est:QI-II} yields the result.
\end{proof}

The next step is to resolve the relation between $M_k$ and $Q(t,t)$ so as to obtain an estimate on $Q(t,t)$ that depends solely on $t, \e,$ and the initial data. To do this, we will require 
the following estimate from \cite[Proposition 3.1]{ChenChen}, which allows the moments to be controlled in terms of $Q(t,t)$.

\begin{lemma} \label{lem:Mk-Q}
	There exists a constant $C$ depending on $k$, $M_2(0)$, $M_k(0) $ such that 
	\be
	M_k(t) \leq C (1 + Q(t,t)^{\max\{2, k-2\}})
	\ee 
\end{lemma}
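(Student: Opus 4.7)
The plan is to represent the moment in Lagrangian coordinates and then split the initial phase space according to the size of $|v|$ relative to $Q:=Q(t,t)$, using energy conservation to close the argument. Since the characteristic flow preserves both $f$ and Lebesgue measure, we have
\[
M_k(t) \le 1 + \int_{\bt^d\times\br^d} |V(t;0,x,v)|^k f_0(x,v)\,dx\,dv,
\]
and by the very definition of $Q(t,t)$ the velocity increment along characteristics is controlled pointwise: $|V(t;0,x,v)-v|\le Q$, whence $|V(t;0,x,v)|\le|v|+Q$.

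With this in hand I would split the integral along the dichotomy $\{|v|\le 2Q\}\cup\{|v|>2Q\}$. On $\{|v|\le 2Q\}$ the bound $|V(t)|\le 3Q$ allows one to pull out a factor $(3Q)^{k-2}$ and keep only $|V(t)|^2$, giving
\[
\int_{|v|\le 2Q}|V(t)|^k f_0\,dx\,dv \;\le\; (3Q)^{k-2}\, M_2(t).
\]
On the complement $\{|v|>2Q\}$, the inequality $|v|+Q\le\tfrac32|v|$ yields $|V(t)|^k\le(3/2)^k|v|^k$, so this contribution is bounded by $(3/2)^k M_k(0)$. Combining the two pieces with a uniform bound $M_2(t)\le C$ produces $M_k(t)\le C\bigl(1+Q^{k-2}\bigr)$; the stated exponent $\max\{2,k-2\}$ then follows from the elementary comparison $Q^{k-2}\le 1+Q^{\max\{2,k-2\}}$ obtained by splitting on whether $Q\le 1$ or $Q>1$.

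The crucial step, and the one I expect to be the main obstacle, is obtaining the time-uniform bound $M_2(t)\le C$. A naive application of $|V(t)|\le|v|+Q$ only yields $M_2(t)\le C(1+Q^2)$, which when fed back into the small-velocity estimate degrades the final bound to $C(1+Q^k)$ and destroys the saving that makes the whole bootstrap in Section~\ref{sec:density} work. One must instead exploit the conserved VPME energy $\mc{E}_\e[f]$ defined in \eqref{def:Energy}: since $\int|\nabla U|^2\ge 0$ and the elementary inequality $y e^y\ge -e^{-1}$ implies $\int U e^U\ge -|\bt^d|/e$, the conservation identity $\mc{E}_\e[f(t)]=\mc{E}_\e[f_0]$ together with Lemma~\ref{lem:InitialEnergy} yields $M_2(t)\le 2\mc{E}_\e[f_0]+2|\bt^d|/e\le C$, uniformly in $t$ and $\e$. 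Once this ingredient is in place, the splitting argument runs cleanly and produces the claimed estimate.
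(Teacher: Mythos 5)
Your argument is correct, and it is essentially the standard proof of this moment bound; note that the paper does not actually prove this lemma but cites it from~\cite{ChenChen} (Proposition~3.1 there), which uses the same Lagrangian representation and the same dichotomy between $\{|v|\lesssim Q\}$ (where the moment is controlled by the kinetic energy) and $\{|v|\gtrsim Q\}$ (where characteristics cannot leave a comparable shell, so one simply uses $M_k(0)$). The one place where the ion model differs from the electron model treated in~\cite{ChenChen} is the coercivity of the energy: for the electron system the electrostatic contribution $\tfrac{\e^2}{2}\int|\nabla U|^2$ is manifestly nonnegative, whereas for VPME the term $\int U e^U$ can be negative, and you correctly observe that $ye^y\ge -e^{-1}$ bounds it below by $-|\bt^d|/e$, so conservation of $\mc{E}_\e$ together with Lemma~\ref{lem:InitialEnergy} still gives a time- and $\e$-uniform bound on $M_2(t)$. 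Two cosmetic points: $M_k(t)$ is defined with a supremum over $s\in[0,t]$, so the pushforward identity should be written at each time $s\le t$ and then supped (your bounds $|V(s;0,x,v)-v|\le Q(t,t)$ are already uniform in $s\le t$, so nothing changes); and the resulting constant $C$ also depends on the uniform energy bound, i.e.\ on $C_0$ through Lemma~\ref{lem:InitialEnergy}, not solely on $M_2(0),M_k(0)$ --- but this slight imprecision is inherited from the statement as quoted from~\cite{ChenChen}, not introduced by you.
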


Using this result, we may first obtain lower bounds on the time $t_{II}$.

\begin{lemma}
The time $t_{II}$ satisfies the lower bound
\be
\e^{-2 (4 \min \{ 4, k \} - 13)} \leq C_k(1 + t_{II}) .
\ee

\end{lemma}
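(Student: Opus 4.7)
The plan is to combine the definition of $t_{II}$ with the two key estimates: Lemma~\ref{lem:Q-Mk}, which bounds $Q(t,t)$ in terms of $t$ and $M_k(t)$, and Lemma~\ref{lem:Mk-Q}, which bounds $M_k$ in terms of $Q(t,t)$. Solving the resulting inequality for $t_{II}$ yields the lower bound. The key observation is that at $t_{II}$, by definition and continuity of $M_k$, we have $M_k(t_{II})^{1/(2(k-2))} = 2\e^{-8}$, so $M_k(t_{II})^{1/(2(k-2))} \vee \e^{-8}$ is of order $\e^{-8}$, and the max in Lemma~\ref{lem:Q-Mk} simplifies.

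More precisely, first apply Lemma~\ref{lem:Q-Mk} at $t = t_{II}$, using that the max evaluates to $2\e^{-8}$, to get
\be
Q(t_{II}, t_{II}) \leq C \e^{-10} (1 + t_{II}).
\ee
Next, apply Lemma~\ref{lem:Mk-Q} at $t = t_{II}$ and substitute this bound, writing $m := \max\{2, k-2\}$, to obtain
\be
M_k(t_{II}) \leq C_k \bigl(1 + \e^{-10m}(1+t_{II})^m\bigr).
\ee
On the other hand, the definition of $t_{II}$ yields $M_k(t_{II}) = 2^{2(k-2)} \e^{-16(k-2)}$. Combining the two relations and absorbing the harmless $+1$ into the constant gives
\be
\e^{-16(k-2)} \leq C_k \e^{-10m} (1 + t_{II})^m,
\ee
and, after taking $m$-th roots,
\be
\e^{-16(k-2)/m + 10} \leq C_k (1 + t_{II}).
\ee

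The final step is a case distinction on the value of $m$. If $k \leq 4$, then $m = 2$, and the exponent on the left becomes $-8(k-2) + 10 = -(8k - 26) = -2(4k - 13) = -2(4\min\{4,k\} - 13)$, matching the claim. If $k > 4$, then $m = k - 2$, and the exponent becomes $-16 + 10 = -6 = -2(4\cdot 4 - 13) = -2(4\min\{4,k\} - 13)$, again matching. In both cases we obtain $\e^{-2(4\min\{4,k\} - 13)} \leq C_k (1 + t_{II})$, which completes the argument. No step here looks technically hard; the only mild subtlety is verifying that the two branches of the $\min\{4,k\}$ in the final exponent correspond exactly to the two branches of $m = \max\{2, k-2\}$, which the computation above confirms.
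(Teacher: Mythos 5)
Your proof is correct and follows exactly the same route as the paper: evaluate the max in Lemma~\ref{lem:Q-Mk} at $t=t_{II}$ using $M_k(t_{II})^{1/(2(k-2))} = 2\e^{-8}$, feed the resulting bound $Q(t_{II},t_{II}) \leq C\e^{-10}(1+t_{II})$ into Lemma~\ref{lem:Mk-Q}, equate with $M_k(t_{II}) = 2^{2(k-2)}\e^{-16(k-2)}$, and rearrange. The only difference is that you explicitly carry out the $m$-th root and the two-branch case check that the paper summarizes as ``upon rearranging,'' and both branches check out as you computed.
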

\begin{proof}

By definition of $t_{II}$, $M_k(t_{II})^{\frac{1}{2(k-2)}} = 2 \e^{-8}$. Moreover, by Lemma~\ref{lem:Q-Mk}, $Q(t_{II}, t_{II}) \leq C \e^{-10} (1+ t_{II})$. We substitute these bounds into the estimate obtained from Lemma~\ref{lem:Mk-Q}:
\begin{align}
\e^{-16(k-2)} &= C M_k(t_{II}) \\
& \leq C (1 + Q(t_{II},t_{II})^{\max\{2, (k-2) \}}) \\
& \leq C \e^{-10 \max\{2, (k-2) \}} (1+ t_{II})^{\max\{2, (k-2) \}}.
\end{align}
Upon rearranging this inequality, we obtain
\be
\e^{-2 (4 \min \{ 4, k \} - 13)} \leq C_k(1 + t_{II}) .
\ee

\end{proof}

Thus if $k > 13/4$ then $t_{II} \to +\infty$ as $\e \to 0$. It follows that for $T_\ast > 0$ fixed there exists $\e_\ast > 0$ such that $T_\ast < t_{II}$ for all $\e < \e_\ast$, and thus the interval of interest $[0, T_\ast]$ is entirely contained within the regions I and II. We deduce directly from Lemma~\ref{lem:Q-Mk} that for all $t \in [0,T_\ast]$ and $\e < \e_\ast$,
\be \label{est:QI-II-final}
Q(t,t) \leq C \e^{-10} (t+1) .
\ee
It remains only to complete the estimate for the case $3 < k \leq 13/4$.

\begin{lemma} \label{lem:Q-final-3d}
Let $k \leq 13/4$. Then
\be
Q(t,t) \leq C \e^{-2 \cdot \frac{k-2}{k-3}} (t+1)^{\frac{k-2}{k-3}} .
\ee

\end{lemma}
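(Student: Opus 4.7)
\textbf{Proof plan for Lemma~\ref{lem:Q-final-3d}.} The strategy is to close the loop between the two opposing bounds already established: Lemma~\ref{lem:Q-Mk} controls $Q(t,t)$ in terms of $M_k(t)$, while Lemma~\ref{lem:Mk-Q} controls $M_k(t)$ in terms of $Q(t,t)$. Substituting one into the other yields an implicit inequality for $Q(t,t)$, and the claim will follow by resolving it via a case split.

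Concretely, I would first observe that for $k \leq 13/4$ we have $k-2 \leq 5/4 < 2$, hence $\max\{2, k-2\} = 2$ in Lemma~\ref{lem:Mk-Q}, giving $M_k(t) \leq C(1 + Q(t,t)^2)$. Since $\frac{1}{2(k-2)} < 1$ (because $k > 3$), subadditivity of $s \mapsto s^{1/(2(k-2))}$ yields
\be
M_k(t)^{\frac{1}{2(k-2)}} \leq C \bigl(1 + Q(t,t)^{\frac{1}{k-2}}\bigr).
\ee
Plugging this into Lemma~\ref{lem:Q-Mk} and absorbing the constant $1$ (we may assume $Q(t,t) \geq 1$, else the claim is trivial) leads to
\be
Q(t,t) \leq C \e^{-2} (1+t) \bigl( Q(t,t)^{\frac{1}{k-2}} + \e^{-8} \bigr).
\ee

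Next I would resolve this implicit inequality by splitting on which term dominates. In the case $Q(t,t)^{1/(k-2)} \geq \e^{-8}$, one gets
\be
Q(t,t)^{\frac{k-3}{k-2}} \leq C \e^{-2} (1+t),
\ee
which, since $k > 3$ ensures $\frac{k-3}{k-2} > 0$, can be inverted to
\be
Q(t,t) \leq C \e^{-2 \cdot \frac{k-2}{k-3}} (1+t)^{\frac{k-2}{k-3}},
\ee
exactly the desired bound. In the opposite case $\e^{-8} \geq Q(t,t)^{1/(k-2)}$, the inequality reduces to $Q(t,t) \leq C \e^{-10} (1+t)$; but for $k \leq 13/4$ we have $\frac{k-2}{k-3} \geq 5$, so $2 \cdot \frac{k-2}{k-3} \geq 10$, and since $\e \leq 1$ and $(1+t)^{(k-2)/(k-3)} \geq 1+t$, this second-case bound is absorbed by the first.

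The proof is essentially algebraic once Lemmas~\ref{lem:Q-Mk} and~\ref{lem:Mk-Q} are in hand, so I anticipate no serious obstacle. The only subtlety worth flagging is the exponent bookkeeping: the threshold $k = 13/4$ is chosen precisely so that the two exponent contributions match at $2 \cdot \frac{k-2}{k-3} = 10$, which is why for $k > 13/4$ one stays in the regime of Lemma~\ref{lem:I-III}/\eqref{est:QI-II-final} and only for $k \leq 13/4$ does the moment-feedback produce a larger $\e^{-\zeta}$ blow-up.
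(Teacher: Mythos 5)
Your proposal is correct and follows essentially the same approach as the paper: feed Lemma~\ref{lem:Mk-Q} into Lemma~\ref{lem:Q-Mk}, resolve the resulting implicit inequality, and use $k \leq 13/4$ to check that $2\cdot\frac{k-2}{k-3} \geq 10$ dominates the crude $\e^{-10}$ bound. The only cosmetic difference is that you split cases according to which term in the max $Q(t,t)^{1/(k-2)} \vee \e^{-8}$ dominates, whereas the paper splits on $t \leq t_{II}$ versus $t > t_{II}$ (invoking the already-established estimate \eqref{est:QI-II-final} in the first case); these two splits are equivalent by the definition of $t_{II}$, and the exponent bookkeeping is identical.
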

\begin{proof}
If $t \leq t_{II}$, then the estimate \eqref{est:QI-II-final} holds. Otherwise, $t > t_{II}$ and so $M_k(t)^{\frac{1}{2(k-2)}} \geq 2 \e^{-8}$.
Then, Lemmas~\ref{lem:Q-Mk} and \ref{lem:Mk-Q} give
\be
Q(t,t) \leq C \e^{-2} (t+1)  M_k(t)^{\frac{1}{2(k-2)}} \leq C \e^{-2} (t+1) \, Q(t,t)^{\frac{1}{k-2}} ,
\ee
since $k \leq 13/4$ and so in particular $k < 4$ (we assume without loss of generality that $Q(t,t) > 1$ so as to absorb the additive constant). Thus
\be \label{est:Q-III-longtime}
Q(t,t) \leq C \e^{-2\frac{k-2}{k-3}} (1+t)^{\frac{k-2}{k-3}} .
\ee
We conclude that, for each $t \geq 0$, $Q(t,t)$ is bounded by the maximum of the two bounds \eqref{est:QI-II-final} and \eqref{est:Q-III-longtime}
\be
Q(t,t) \leq C \max\{ \e^{-10} (t+1), \e^{-2\frac{k-2}{k-3}} (1+t)^{\frac{k-2}{k-3}} \}.
\ee
Since $k \leq 13/4$, we have $2 \cdot \frac{k-2}{k-3} \geq 10$, and we conclude that
\be 
Q(t,t)\leq C \e^{-2 \cdot \frac{k-2}{k-3}} (1+t)^{\frac{k-2}{k-3}} .
\ee
\end{proof}

\section{Proofs of the Main Results}\label{sec:proof}
Thanks to all the results from the previous sections, we can now prove our main results.

\subsection{Proof of Theorem~\ref{thm:main}}

\subsubsection{Case $d=1$}

The one-dimensional case $d=1$ is a \emph{weak-strong} result, wherein $\{ f_{0,\e} \}_{\e \leq 1}$ may be measures,
and the `strong' solutions $\{ g_\e\}_{\e \leq 1}$ need only satisfy $\rho[g_\e] \in L^1 \left ( [0, T_\ast ) ; L^\infty(\bt) \right )$.

We first comment on the existence of solutions $f_\e$ with respective initial datum $f_{0,\e}$. By (H3), each $f_{0,\e}$ has a finite first moment with respect to velocity. Hence, by \cite[Theorem 1.1]{IHK1}, there exists at least one weak solution $f_\e$ of the VPME system \eqref{eq:vpme} with initial datum $f_{0,\e}$. In dimension $d=1$, we do not use Loeper's uniqueness theory. Rather, the argument relies on the weak-strong stability method in $W_1$, based on Hauray's estimate for the one-dimensional Vlasov--Poisson equation and its adaptation to the present VPME setting developed in Section~\ref{sec:stability_1}. This is the natural framework in the present setting, since the rough solution $f_\e$ may be measure-valued, while the comparison solution $g_\e$ only needs to satisfy $\rho[g_\e]\in L^1([0,T_*);L^\infty(\T))$. Therefore we fix any sequence $\{ f_\e \}_{\e \leq 1}$ of such weak solutions.

By the triangle inequality for $W_1$,
\be
W_1 \left ( f_\e, g \right ) \leq W_1 \left ( f_\e, g_\e \right ) + W_1 \left ( g_\e, g \right ) .
\ee
We apply Lemma~\ref{lem:stability-1d} to the first term to find that
\be
W_1 \left ( f_\e, g \right ) \leq  \e^{-2} \exp(\| \rho[g_\e] \|_{L^1\left ([0, t) ; L^\infty(\bt) \right )} + C \e^{-2}t ) W_1(f_{0,\e}, g_{0,\e}) + W_1 \left ( g_\e, g \right ) .
\ee

Hence, by \eqref{hyp:rate}, if
\be
W_1(f_{0,\e}, g_{0,\e}) \leq \exp(- C_1 \e^{-2}), \qquad C_1 > C T_\ast + C_0,
\ee
then
\be
\lim_{\e \to 0} \sup_{t \in [0,T_\ast]} W_1(f_\e, g) = 0 .
\ee

\subsubsection{Case $d=2,3$}

In the cases $d=2$ and $d=3$, we require both the rough solutions $\{ f_\e \}_{\e \leq \e_\ast}$ and `smooth' solutions $\{ g_\e \}_{\e \leq \e_\ast}$ to have spatial densities uniformly bounded in $L^1 \left ( [0, T_\ast ] ; L^\infty(\bt^d) \right )$. 
In comparison to the $d=1$ case we must therefore additionally obtain $L^1 \left ( [0, T_\ast ] ; L^\infty(\bt^d) \right )$ bounds on $\rho[f_\e]$.

First note that, by the triangle inequality for $W_1$
and the first inequality in Lemma~\ref{lemma:Wp},
\be
W_1(f_\e, g) \leq W_1(f_\e, g_\e) + W_1(g_\e, g) 
\leq \sqrt{2} W_2(f_\e, g_\e) + W_1(g_\e, g) .
\ee
The term $W_1(g_\e, g)$ will converge uniformly to zero as $\e$ tends to zero, by assumption \ref{hyp:regular-convergence}. 
It remains to show that
\be \label{eq:lim-fg}
\lim_{\e \to 0} \sup_{[0,T]} W_2(f_\e, g_\e) = 0 .
\ee
For this, we apply Proposition~\ref{prop:stability} to $f_\e$ and $g_\e$: if $W_2(f_{0,\e},g_{0,\e})\leq \sqrt{2 e^{-1}} \e$ and
\begin{equation} \label{est:W2-hyp-proof}
H \left [ \e^{-2}W_2(f_{0,\e}, g_{0,\e})^2 \right ] \geq \frac{C_d}{\e}\int_0^{T_\ast} B(s)\,ds+\sqrt{\left|\log\left(\frac{\e}e\right)\right|} ,
\end{equation}
where
\be
B(t) = \| \rho[f_\e](t) \|_{L^\infty} + \| \rho[g_\e](t) \|_{L^\infty}, 
\ee
and we recall that the function $H$ was defined in Equation \eqref{def:H}, then, for all $t \in [0,T_\ast ]$,
\be
W_2(f_\e(t),g_\e(t))^2 \leq 2 e^{-\left( H \left [ \e^{-2}W_2(f_{0,\e}, g_{0,\e})^2 \right ]  - \frac{C_d}{\e}\int_0^t B(s)\,ds\right)^2} \leq \frac{2}{e} \, \e .
\ee
By the second inequality in Lemma~\ref{lemma:Wp} and \ref{hyp:regular-moments}-\ref{hyp:moments},
\be \label{est:swap-W1-W2}
W_2(f_{0,\e},g_{0,\e})\leq 3(1 + 2 C_0)^{\frac{1}{(j_0 \vee k_0)-1}} W_1 (f_{0,\e},g_{0,\e})^{\frac{(j_0 \vee k_0)-2}{(j_0 \vee k_0)-1}} .
\ee
It is then clear from \eqref{hyp:rate} that $W_2(f_{0,\e},g_{0,\e})\leq \sqrt{2 e^{-1}} \e$ is satisfied for all $\e$ sufficiently small. To complete the proof it therefore suffices to show that \eqref{est:W2-hyp-proof} holds.

Now suppose that
\be \label{hyp:H4-final-proof}
W_2(f_{0,\e},g_{0,\e}) \leq \exp \left (- \widetilde{C}  \e^{-\zeta}(1 + \mathbbm{1}_{d=2} |\log \e|^2) \right ),
\ee
for an exponent $\zeta > 0$ and constant $\widetilde{C} >0$ to be determined. Note that, by estimate \eqref{est:swap-W1-W2}, \eqref{hyp:H4-final-proof} is implied by \eqref{hyp:rate} if $C$ is large enough in terms of $\widetilde{C}$, $C_0$ and $k_0$.

For $\e > 0$ sufficiently small, we have 
\be
\e^{-2}W_2(f_{0,\e}, g_{0,\e})^2 \leq \e^{-2} \exp \left (- 2 \widetilde{C} \e^{-\zeta}(1 + \mathbbm{1}_{d=2} |\log \e|^2) \right ) \leq  \exp \left (- \widetilde{C}  \e^{-\zeta}(1 + \mathbbm{1}_{d=2} |\log \e|^2) \right ).
\ee
We assume that $\widetilde C > \log \frac{e}{2}$, which ensures that the right hand side is less than $2 e^{-1}$ for all $\e \geq 1$.
We observed previously that the function $H$ is decreasing on the interval $(0, 2 e^{-1})$ and tends to $+\infty$ as its argument tends to zero. Thus
\be
H \left [ \e^{-2}W_2(f_{0,\e}, g_{0,\e})^2 \right ] \geq H \left [ \exp \left (- \widetilde{C} \e^{-\zeta}(1 + \mathbbm{1}_{d=2} |\log \e|^2) \right ) \right ]
\ee
(where we allow the left hand side formally to take the value $+\infty$ in the case $W_2(f_{0,\e}, g_{0,\e})^2 = 0$).

Now note that
\begin{align}
H(w)^2 & = - \log (- w \log \frac{1}{2} w) = - \log w - \log \left [ \log \left ( \frac{2}{w} \right ) \right ] \\
& = - \log w - \log 2 - \log \log \left ( \frac{2}{w} \right )^{1/2}\\
& \geq - \log w - \log 2 - \frac{1}{2} \log \left ( \frac{2}{w} \right ) \\
& \geq - \frac{1}{2} \log w - \frac{3}{2} \log 2
\end{align}
where in the third line we have used the inequality $\log y \leq y$. Hence
\be
H \left [ \exp \left (- \widetilde{C} \e^{-\zeta}(1 + \mathbbm{1}_{d=2} |\log \e|^2) \right ) \right ] \geq \frac{1}{\sqrt{2}} \left ( \widetilde{C}  \e^{-\zeta}(1 + \mathbbm{1}_{d=2} |\log \e|^2)  - 3 \log 2 \right )^{1/2}
\ee
We assume that $\widetilde C > 6 \log 2$, which ensures that 
\begin{align}
H \left [ \exp \left (-\widetilde{C} \e^{-\zeta}(1 + \mathbbm{1}_{d=2} |\log \e|^2) \right ) \right ] & \geq \frac{1}{2} \sqrt{\widetilde C} \e^{-\zeta/2}  (1 + \mathbbm{1}_{d=2} |\log \e|^2)^{1/2} \\
& \geq \frac{1}{2^{3/2}} \sqrt{\widetilde C} \e^{-\zeta/2}  (1 + \mathbbm{1}_{d=2} |\log \e|) .
\end{align}
It remains to show that $\widetilde{C} > 6 \log 2$ and $\zeta$ can be chosen such that for all sufficiently small $\e$,
\be
 \sqrt{ \widetilde{C}} \e^{-\zeta/2}(1 + \mathbbm{1}_{d=2} |\log \e|)  \geq \frac{2^{3/2} C_d}{\e}\int_0^{T_\ast} B(s)\,ds+ 2^{3/2} \sqrt{\left|\log\left(\frac{\e}e\right)\right|} .
\ee
The term $2^{3/2} \sqrt{\left|\log\left(\frac{\e}e\right)\right|}$ is clearly of lower order than $\e^{-\zeta/2}$ for any $\zeta > 0$. It therefore suffices to show that
\be \label{lb:B-A}
 \sqrt{ \widetilde{C}} \e^{-\zeta/2}(1 + \mathbbm{1}_{d=2} |\log \e|)  \geq \frac{4 C_d}{\e}\int_0^{T_\ast} B(s)\,ds .
\ee

By \ref{hyp:regular-moments}, \ref{hyp:moments}, Proposition~\ref{prop:mass-growth-2d} and Lemma~\ref{lem:Q-final-3d},
\be
 B(t) \leq \begin{cases} C_2  (1 + T_\ast)^3 \e^{-4} ( 1 + |\log \e|) & d=2 \\
C_3 (T_\ast+1)^{3 + \frac{12}{1 - (13 - 4k_0)_+}} \e^{-6 \left (1  + \frac{4}{1 - (13 - 4k_0)_+} \right )} & d=3 ,
\end{cases}
\ee
and hence 
\be
\frac{1}{\e} \int_0^{T_\ast} B(t) \di t \leq \begin{cases} C_2(T_\ast) \, \e^{-5} ( 1 + |\log \e|) & d=2 \\
C_3(T_\ast, k_0) \, \e^{-7  + \frac{24}{1 - (13 - 4k_0)_+}} & d=3 .
\end{cases}
\ee
Therefore, by choosing
\be
\zeta = \begin{cases}
10 & d=2 \\
2 + 12 \left (1 +  \frac{4}{1 - (13 - 4k_0)_+} \right ) & d=3 ,
\end{cases}
\ee
and $\widetilde{C} > 6 \log 2$ large enough in terms of $T_\ast$, $k_0$ and $d$, we can ensure that \eqref{lb:B-A} holds, which implies the convergence \eqref{eq:lim-fg}.
Thus there exists $C>0$ such that 
\be
\lim_{\e \to 0} \sup_{t \leq T_\ast} W_2(f_\e(t), g(t)) = 0
\ee
under the hypothesis \ref{hyp:regular-moments}-\eqref{hyp:rate}, which completes the proof.
\qed

\subsection{Proof of Corollary~\ref{cor:analytic}}

To complete the proof of Corollary~\ref{cor:analytic}, it remains to establish that hypotheses \ref{hyp:regular-moments} and \ref{hyp:regular-convergence} are satisfied under the assumptions \ref{hyp:analytic} and \ref{hyp:analytic-convergence}.
This can be shown using an adaptation to the ion model of the methods of Grenier \cite{Grenier96} (see the discussion in \cite{IHK1}).

In \cite{Grenier96}, the author introduces a representation of the plasma as a superposition of a possibly uncountable collection of fluids $(\rho_\e^\theta, u_\e^\theta)_{\theta \in \Theta}$, and shows that this multi-fluid system has a solution under the uniform analyticity assumption \ref{hyp:analytic} on the initial data, for each $\e \in (0, \e_\ast)$, on a uniform time interval $T_\ast$; by 
\cite{GPI-WP} these are the unique solutions $g_\e$ of \eqref{eq:vpme} with $\rho[g_\e] \in L^1([0, T_\ast] ; L^\infty(\bt^d))$.
Moreover, the techniques of \cite{Grenier96} can be used to show that the quasineutral limit holds: there exists a solution $g$ of KIsE \eqref{eq:KE-iso} such that $g_\e$ converges to $g$. The convergence can be stated as follows:

For any $\delta ' < \delta$, there exists a time $T_\ast > 0$ and multi-fluids $(\rho_\e^\theta, u_\e^\theta)_{\theta \in \br^{d}}$ bounded in $C([0,T_\ast] ; B_{\delta '})$ such that
\be
g_\e(t,x,v) = \int_{\br^d} \rho^\theta_\e(t,x) \delta_0(v - u^\theta_\e(t,x)) \frac{\di \theta}{1 + |\theta|^{k_0}} , \qquad \text{for all } \e \in (0,1]
\ee
and multi-fluids $(\rho^\theta, u^\theta)_{\theta \in \br^d}$ such that
\be \label{eq:lim-Hs}
\lim_{\e \to 0} \sup_{t \in [0,T_\ast], \theta \in \br^d} \left ( \| \rho^\theta_\e - \rho^\theta \|_{H^s(\bt^d)} + \| u^\theta_\e - u^\theta \|_{H^s(\bt^d)} \right ) = 0 \qquad \text{for all} \; s \in \bb{N}
\ee
and the function
\be
g(t,x,v) : = \int_{\br^d} \rho^\theta(t,x) \delta_0(v - u^\theta(t,x)) \frac{\di \theta}{1 + |\theta|^{k_0}} 
\ee
defines a solution to KIsE \eqref{eq:KE-iso}.
The multi-fluid $H^s$ convergence \eqref{eq:lim-Hs} then implies that
\be
\lim_{\e \to 0} \sup_{t \in [0,T_\ast]} W_p (g_\e, g) = 0 , \qquad p \in [1, + \infty) ,
\ee
and the uniform $C([0,T_\ast] ; B_{\delta '})$ bounds imply that \ref{hyp:regular-moments} is satisfied.
\qed

\subsection{Proof of Corollary~\ref{cor:Penrose}}

To complete the proof of Corollary~\ref{cor:Penrose}, we once again need to establish that hypotheses \ref{hyp:regular-moments} and \ref{hyp:regular-convergence} are satisfied under the assumptions \ref{hyp:Penrose-regularity} and \ref{hyp:Penrose}.
This can be shown using the techniques of \cite{HKR}, with adaptations to case with the full nonlinearity $e^U$ (see the discussion in the introduction to \cite{HKR}).

The analogue of \cite[Theorem 1]{HKR} shows that, under \ref{hyp:Penrose-regularity}-\ref{hyp:Penrose}, there exist solutions $g_\e$ to the $(VPME)_\e$ system \eqref{eq:vpme} with respective initial data $g_{0,\e}$, on a uniform time interval $[0, T_\ast]$, that are bounded in $C \left ( [0, T_\ast] ; \mc{H}^{2m -1 }_{r_0} \right )$ uniformly in $\e$ (once again by \cite{GPI-WP} these are the unique bounded density solutions with the given initial datum), and such that the spatial densities $\rho[g_\e]$ are uniformly bounded in $L^2([0,T_\ast] ; H^{2m}(\bt^d))$. Since $2m > 4 + d/2 + \lfloor d/2 \rfloor$ is certainly greater than $d/2$, $H^{2m}(\bt^d)$ embeds continuously into $L^\infty(\bt^d)$, whence it follows that $\rho[g_\e]$ are uniformly bounded in $L^1([0,T_\ast] ; L^\infty(\bt^d))$. Thus \ref{hyp:regular-moments} is satisfied.

The analogue of \cite[Theorems 2, 3]{HKR} then states that there exists a unique solution $g \in C \left ( [0, T_\ast] ; \mc{H}^{2m -1 }_{r_0} \right )$ of KIsE \eqref{eq:KE-iso}
such that $g_\e$ converges as $\e$ tends to zero to $g$ in the sense of $L^\infty \left ( [0, T_\ast ] ; L^2\cap L^\infty (\bt^d \times \br^d) \right )$.
This is a stronger notion of convergence than that required by \ref{hyp:regular-convergence} and thus this hypothesis is satisfied.
\qed

\begin{paragraph}{Acknowledgments}
The first author was supported by the Additional Funding Programme for Mathematical Sciences, delivered by EPSRC (EP/V521917/1) and the Heilbronn Institute for Mathematical Research, and also thanks the Forschungsinstitut f{\"u}r Mathematik at ETH Z{\"u}rich for its hospitality during the preparation of this work. The second author acknowledges the support of the SNSF Starting Grant \emph{Challenges and Breakthroughs in the Mathematics of Plasmas}, $TMSGI2\_226018$. The authors are very grateful to the anonymous Referees for their careful reading, thoughtful comments, and many insightful suggestions, which led to a substantial improvement of the paper. The authors also warmly thank Mr.~Florian Spicher for his assistance in the creation of the figures.
\end{paragraph}

\appendix

\section{Auxiliary Lemma}

\begin{lemma}
\label{lem:app-inverse}
Consider the function $b: [0, + \infty) \to [0, + \infty)$ defined by
\be
b(y) = \frac{y}{1 + \log(1+y)} .
\ee
Then $b$ has a well-defined continuous strictly increasing inverse $b^{-1}$. Moreover, for all $u \in [0,+\infty)$,
\be \label{est:app-binv}
b^{-1}(u) \leq 2 u (1 + \log(1+u)) .
\ee
\end{lemma}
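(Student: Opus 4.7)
The argument naturally splits into two parts: first establishing that $b$ is a bijection of $[0,\infty)$ onto itself with continuous, strictly increasing inverse, and then controlling that inverse from above by $2u(1+\log(1+u))$.

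For the first part, I would differentiate directly. A short computation gives
\[
b'(y) \;=\; \frac{1 + \log(1+y) - y/(1+y)}{\bigl(1 + \log(1+y)\bigr)^{2}} \;=\; \frac{\log(1+y) + 1/(1+y)}{\bigl(1 + \log(1+y)\bigr)^{2}},
\]
and each term in the numerator is strictly positive on $[0,\infty)$. Combined with $b(0) = 0$ and $b(y) \to +\infty$ as $y \to +\infty$ (since $y$ dominates $1 + \log(1+y)$), this shows that $b : [0,+\infty) \to [0,+\infty)$ is a continuous strictly increasing bijection, so $b^{-1}$ exists with the required regularity.

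For the upper bound \eqref{est:app-binv}, I would use monotonicity of $b^{-1}$ to recast the claim in the equivalent form
\[
u \;\leq\; b\bigl(2u(1 + \log(1+u))\bigr).
\]
Setting $A := 1 + \log(1+u)$ and unfolding the definition of $b$, this reduces to $1 + \log(1 + 2uA) \leq 2A$, i.e.\
\[
1 + 2uA \;\leq\; e^{2A} \;=\; e^{2}(1+u)^{2}.
\]
To verify this, I would invoke the elementary estimate $A = 1 + \log(1+u) \leq 1 + u$, giving $2uA \leq 2u + 2u^{2}$, and expand the right-hand side as $e^{2}(1+u)^{2} - 1 = (e^{2} - 1) + 2e^{2}u + e^{2}u^{2}$. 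The desired inequality is then the polynomial bound
\[
(e^{2}-2)\,u^{2} + 2(e^{2}-1)\,u + (e^{2}-1) \;\geq\; 0,
\]
which is manifest for $u \geq 0$ since every coefficient is positive.

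There is no real obstacle here: the monotonicity step is a one-line derivative computation, and the inverse estimate is a polynomial inequality after an algebraic rearrangement. The only point requiring a moment of care is the boundary case $u = 0$, where both sides of \eqref{est:app-binv} vanish; this is already covered by the polynomial inequality above, whose constant term $e^{2}-1$ is positive.
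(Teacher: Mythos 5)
Your argument for the first part (strict monotonicity via the derivative, plus $b(0)=0$ and $b(y)\to\infty$) is correct and matches the paper. The second part contains a genuine algebraic error at the exponentiation step. The claim ``$1 + \log(1+2uA) \leq 2A$, i.e.\ $1 + 2uA \leq e^{2A}$'' is not an equivalence: exponentiating $\log(1+2uA) \leq 2A-1$ gives $1 + 2uA \leq e^{2A-1}$, not $e^{2A}$. As written, you then verify the weaker inequality $1 + 2uA \leq e^{2A}$, which only yields $1 + \log(1+2uA) \leq 1 + 2A$, falling short of the required $\leq 2A$. So the chain of implications does not close.

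The approach is nonetheless salvageable with the corrected exponent. Since $A = 1 + \log(1+u)$, one has $e^{2A-1} = e(1+u)^2$, and combining this with $2uA \leq 2u(1+u)$ reduces the claim to
\[
e(1+u)^2 - \bigl(1 + 2u + 2u^2\bigr) \;=\; (e-1) + 2(e-1)u + (e-2)u^2 \;\geq\; 0,
\]
which holds because $e > 2$. The paper sidesteps the exponentiation entirely: it bounds $1 + 2u(1+\log(1+u)) \leq 2(1+u)^2$ and then takes the logarithm, reducing to the numerical fact $1 + \log 2 \leq 2$. Both routes are elementary and essentially equivalent in spirit; the paper's stays in the logarithmic form and thus avoids the bookkeeping slip you made. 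I would recommend replacing $e^{2A}$ with $e^{2A-1} = e(1+u)^2$ and the final polynomial with the one displayed above, after which your proof is complete.
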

\begin{proof}
Since $b$ is a continuously differentiable function of $y$, we can check its monotonicity by direct computation of the derivative:
\be
b'(y) = \frac{1 + (1+z)\log{(1+z)}}{(1+z)(1+\log{(1+z)})^2} > 0 \quad \text{for all} \; z\geq 0.
\ee
Hence, since $b$ is continuous and strictly increasing, $b^{-1}$ is well-defined, continuous, and strictly increasing.

To prove \eqref{est:app-binv}, we will show that 
\be
\label{eq:to prove}
u \leq b(2 u (1 + \log(1+u))).
\ee
Applying $b^{-1}$ will then imply the bound, since $b^{-1}$ is increasing.

With this in mind, we compute
\be \label{eq:app-b-comp}
b(2 u (1 + \log(1+u))) = u \cdot \frac{2 (1 + \log(1+u))}{1 + \log (1 + 2 u (1 + \log(1+u)))} .
\ee
We observe that $1 + \log(1+u) \leq 1 + u$, and hence
\be \label{est:app-denom1}
1 + \log (1 + 2 u (1 + \log(1+u))) \leq 1 + \log(1 + 2u(1+u)) .
\ee
Since $u \geq 0$, 
\be
1 + 2u(1 + u) \leq 1 + 4u + 2u^2 \leq 2(1+u)^2 .
\ee
We substitute this inequality into \eqref{est:app-denom1} to find 
\begin{align} 
1 + \log (1 + 2 u (1 + \log(1+u))) \leq 1 + \log 2 +  2\log(1+u) \label{est:app-denom2} \leq 2 (1 + \log{(1+u)}).
\end{align}
Using \eqref{est:app-denom2} and \eqref{eq:app-b-comp} we obtain \eqref{eq:to prove},
which completes the proof.

\end{proof}

\section{Instability in one dimension for VPME} \label{app:Instability}

This appendix shows, in a self-contained way, the instability mechanism for the quasineutral regime in one space dimension, for the Vlasov--Poisson model with massless electrons.
The proof is based on the construction developed by Han-Kwan and Hauray for the screened Vlasov--Poisson dynamics \cite{HKH}.

We apply the same strategy: the screened Penrose condition yields a growing mode for the linearized equation, around which we construct a high-order expansion in the small amplitude parameter $\delta$. 
We then control the difference between the exact solution and the truncated expansion in $L^1$ on the logarithmic time scale at which the leading mode reaches a macroscopic size.

The role of the Poisson--Boltzmann term is seen after the usual rescaling on a fixed torus: the elliptic relation can be
rewritten as a screened elliptic equation plus a nonlinear remainder,
\be
(1-\partial_{xx})\phi = \rho[g]-1 - \mathcal N(\phi),
\qquad
\mathcal N(\phi)=e^\phi-1-\phi.
\ee
The function $\mathcal N$ vanishes to second order at the origin. In the expansion, this creates (besides the Vlasov defect)
an elliptic defect, and it requires a nonlinear estimate for $\mathcal N$ to close the bootstrap on $\|\phi\|_{L^\infty}$.
Once this point is handled, the comparison between the exact solution and the truncated expansion proceeds as in \cite{HKH},
and the tiling-rescaling step transfers the instability back to physical variables. The physical time scale produced by the
argument is $t \sim \e |\log \e|$, hence $t \le \e^\alpha$ for every fixed $\alpha<1$ when $\e$ is small.

\subsection{Polynomial instability for $(\mathrm{VPME})_\varepsilon$ in one dimension}\label{app:poly-instability-vpme}

As in \cite{HKH} we present the argument in the case $d=1$. The case $d\ge 2$ can be reduced to the one-dimensional construction by tensorizing in the transverse velocity variables: one may consider perturbations of the form
$$
f(x,v)=g(x_1,v_1)\chi(v_2,\dots,v_d),
$$
with $\chi\geq 0$ smooth, integrable and normalized to have unit mass. Then the associated spatial density depends only on $x_1$, the potential may be taken to depend only on $x_1$, and the electric field has only its first component non-zero. In this way the one-dimensional instability mechanism can be extended to higher dimensions.

We work on $\T:=\R/\Z$ with $x\in\T$, $v\in\R$, and we consider
\begin{equation}\label{eq:vpme-app}
\left\{
\begin{aligned}
&\partial_t f_\varepsilon + v\,\partial_x f_\varepsilon + E_\varepsilon\,\partial_v f_\varepsilon = 0,\\
&E_\varepsilon = -\partial_x U_\varepsilon,\\
&\varepsilon^2 \partial_{xx}U_\varepsilon = e^{U_\varepsilon} - \rho_\varepsilon,\qquad
\rho_\varepsilon(t,x)=\int_{\R} f_\varepsilon(t,x,v)\,dv,\\
&f_\varepsilon|_{t=0}=f_{0,\varepsilon}\ge 0,\qquad \int_{\T\times\R} f_{0,\varepsilon}\,dx\,dv=1.
\end{aligned}
\right.
\end{equation}
Let $\mu=\mu(v)>0$ be smooth with $\int_\R \mu(v)\,dv=1$, so that $(f_\varepsilon,U_\varepsilon)\equiv(\mu,0)$
is a homogeneous equilibrium.

We assume the following hypotheses, corresponding to \cite[Eq.~(7.7)]{HKH} with $\alpha=1$:
\begin{enumerate}
\item[(H1)] (\emph{$\delta$-condition}) \quad $\displaystyle \sup_{v\in\R}\frac{|\mu'(v)|}{(1+|v|)\mu(v)}<\infty.$
\item[(H2)] (\emph{screened Penrose instability, $\alpha=1$}) there exists a local minimum $\bar v$ of $\mu$ such that
\begin{equation}\label{eq:penrose-a1}
\int_{\R}\frac{\mu(v)-\mu(\bar v)}{(v-\bar v)^2}\,dv>1.
\end{equation}
\end{enumerate}

\medskip\noindent
\textbf{Norms.}
For $m\in\N$ we use the mixed $L^1$-Sobolev norms
\be
\|g\|_{W^{m,1}_{x,v}}:=\sum_{a+b\le m}\|\partial_x^a\partial_v^b g\|_{L^1(\T\times\R)},
\qquad
\|h\|_{W^{m,1}_{x}}:=\sum_{a\le m}\|\partial_x^a h\|_{L^1(\T)}.
\ee
For negative indices we use the standard dual definition.

\begin{theorem}[Polynomial instability for nonlinear VPME]\label{thm:poly-instability-vpme}
Assume \emph{(H1)-(H2)}. Then for every $N\in\N$ and every $s\in\N$ there exist a sequence
$\varepsilon_k\to 0$ and initial data $f_{0,\varepsilon_k}\ge 0$ with unit mass such that
\begin{equation}\label{eq:init-close}
\|f_{0,\varepsilon_k}-\mu\|_{W^{s,1}_{x,v}(\T\times\R)}\le \varepsilon_k^N,
\end{equation}
and denoting by $f_{\varepsilon_k}$ the corresponding solutions of \eqref{eq:vpme-app}, we have:
for every $\alpha\in[0,1)$,
\begin{equation}\label{eq:macro-instability}
\liminf_{k\to\infty}\ \sup_{t\in[0,\varepsilon_k^\alpha]}\ \|\rho_{\varepsilon_k}(t)-1\|_{L^1_x(\T)} >0,
\qquad
\liminf_{k\to\infty}\ \sup_{t\in[0,\varepsilon_k^\alpha]}\ \varepsilon_k\|E_{\varepsilon_k}(t)\|_{L^1_x(\T)} >0.
\end{equation}
Moreover, for every $r\in\Z$ and every $\alpha\in[0,1)$,
\be
\liminf_{k\to\infty}\ \sup_{t\in[0,\varepsilon_k^\alpha]}\ \|f_{\varepsilon_k}(t)-\mu\|_{W^{r,1}_{x,v}(\T\times\R)} >0.
\ee
In particular, the case $r = -1$ gives
\be
\liminf_{k\to\infty}\ \sup_{t\in[0,\varepsilon_k^\alpha]}\ W_1 \left ( f_{\varepsilon_k}(t), \mu \right ) >0.
\ee
\end{theorem}

\medskip\noindent
\begin{remark}[Grenier's instability scheme]\label{rem:grenier}
The approximation we use is a finite expansion in a small amplitude parameter $\delta$,
built from an unstable eigenmode of the linearized dynamics and corrected order by order so that the reminder is
of size $\delta^{N+1}$ (with the corresponding growth in time). The control of the error up to times of order $|\log\delta|$
is part of a general instability scheme that was introduced by Grenier \cite{Grenier00}. A closely related approach, developed
for viscous boundary layers, can be found in Desjardins and Grenier \cite{DG03}. In the kinetic framework we follow \cite{HKH}, adapting the hierarchy to the Poisson--Boltzmann nonlinearity through
$\mathcal N(z)=e^z-1-z$.
\end{remark}

\subsection*{Preliminary estimates for the screened operator and the nonlinearity}

We introduce the notation
\be
\mathcal A:=1-\partial_{xx},
\qquad
\mathcal N(z):=e^z-1-z.
\ee
For $M>0$ we denote by $\T_M$ the torus $\T_M:=\R/(M\Z)$.

The next estimate is the analogue of the bound used in the screened setting for $\mathcal A^{-1}$.
On $\T_M$ the Fourier symbol of $\mathcal A$ is $1+(2\pi n/M)^2\ge 1$, hence the same type of estimate holds.

\begin{lemma}\label{lem:Ainv-like-311}
Let $s \in \N$ and $F\in W^{s,1}(\T_M)$. Then there exists $C_{s,M}$ such that
\be
\|\partial_x^{s+1}\mathcal A^{-1}F\|_{L^\infty(\T_M)}\leq C_{s,M}\|F\|_{W^{s,1}(\T_M)}.
\ee
\end{lemma}

We also need a nonlinear estimate for $\mathcal N$, in the regime where $\|\phi\|_{L^\infty}$ is small.
This is the point where the Poisson--Boltzmann term enters the stability argument.

\begin{lemma}[Nonlinear estimate for $\mathcal N$]\label{lem:N-nonlinear}
Fix $s\in\N$. There exists $C_{s,M}$ such that if $\phi\in W^{s,1}(\T_M)\cap L^\infty(\T_M)$ and $\|\phi\|_{L^\infty}\le 1$, then
\be
\|\mathcal N(\phi)\|_{W^{s,1}(\T_M)}\ \le\ C_{s,M}\,\|\phi\|_{L^\infty(\T_M)}\,\|\phi\|_{W^{s,1}(\T_M)}.
\ee
Moreover, if $\|\phi\|_\infty+\|\psi\|_\infty\le 1$, then
\be
\|\mathcal N(\phi)-\mathcal N(\psi)\|_{W^{s,1}(\T_M)}\ \le\ C_{s,M}(\|\phi\|_{L^\infty(\T_M)}+\|\psi\|_{L^\infty(\T_M)})\,\|\phi-\psi\|_{W^{s,1}(\T_M)}.
\ee
\end{lemma}

\begin{proof}
Since $\mathcal N(z)=\sum_{\ell\ge 2} z^\ell/\ell!$ is analytic with $\mathcal N(0)=\mathcal N'(0)=0$,
one obtains the first estimate by combining the expansion with standard product bounds in $W^{s,1}(\T_M)$.

For the second estimate we write
\be
\mathcal N(\phi)-\mathcal N(\psi)=(\phi-\psi)\int_0^1 \big(e^{\tau\phi+(1-\tau)\psi}-1\big)\,d\tau,
\ee
and we use again product bounds. When $\|\phi\|_\infty+\|\psi\|_\infty$ is small we have
$\|e^{\tau\phi+(1-\tau)\psi}-1\|_{L^\infty}\lesssim \|\phi\|_\infty+\|\psi\|_\infty$, uniformly in $\tau\in[0,1]$,
which leads to the stated estimate.
\end{proof}

\begin{proof}[Proof of Theorem~\ref{thm:poly-instability-vpme}]
We follow the strategy of \cite{HKH}, and we indicate explicitly where the Poisson--Boltzmann nonlinearity enters.

We work first on the fixed torus $\T_M$, in fast variables $(t/\varepsilon,x/\varepsilon)$, where the unstable mode is built.
Then we rescale back and tile in space to produce solutions on $\T$.

\smallskip\noindent
\emph{Oscillatory sequence, tiling, and rescaling.}
Fix $M\ge 1$ and consider $\varepsilon=\varepsilon_k:=\frac{1}{kM}$.
As in the screened case, we first work on the small torus $\T_{\varepsilon M}$ and then tile $k$ times.
We introduce the rescaled unknowns
\be
\tilde f_\varepsilon(t,x,v)=g\Big(\frac{t}{\varepsilon},\frac{x}{\varepsilon},v\Big),\qquad
\tilde U_\varepsilon(t,x)=\phi\Big(\frac{t}{\varepsilon},\frac{x}{\varepsilon}\Big),\qquad
E=-\partial_x\phi,
\ee
posed on the fixed torus $\T_M$. 

Set $s=t/\varepsilon$ and $y=x/\varepsilon$; we write $g(s,y,v)$ and $\phi(s,y)$ on $\T_M$.

The Vlasov equation keeps the same form, while the elliptic equation becomes
\be
\partial_{xx}\phi = e^\phi-\rho[g],\qquad \rho[g]=\int_{\R} g\,dv.
\ee

\smallskip\noindent
\emph{Screened decomposition.}
With $\mathcal A=1-\partial_{xx}$ and $\mathcal N(z)=e^z-1-z$, we rewrite the elliptic relation as
\begin{equation}\label{eq:screened-form}
\mathcal A\phi=\rho[g]-1-\mathcal N(\phi).
\end{equation}
When $\mathcal N(\phi)$ is omitted, one recovers the screened coupling with $\alpha=1$.

\smallskip\noindent
\emph{Linear instability.}
Linearizing at $(\mu,0)$, the nonlinear term disappears since $\mathcal N'(0)=0$.
Hence the linearized system coincides with the screened linearized system.
Under \eqref{eq:penrose-a1}, there exist an unstable eigenvalue $\lambda$ with $\sigma:=\Re\lambda>0$
and a real growing mode $h_1(t,x,v)$ whose density is of the form $C e^{\sigma t} \cos \left (2\pi k x + t \Im \lambda \right )$ for some non-zero integer $k \in \Z \setminus \{ 0\}$ and constant $C$.

In particular, the associated density fluctuation has zero spatial mean and an oscillatory profile with fixed frequency on $\T_M$.
After scaling back to the physical variable $x$, this profile becomes $\varepsilon$-oscillatory, and the tiling step
replicates it on $\T$ without changing the size of $\|\rho_\varepsilon-1\|_{L^1_x}$ on the time interval considered.

\smallskip\noindent
\emph{High-order approximation and the two defects.}
Fix $N\ge 2$ and $\delta>0$ small. We seek an approximate solution as a finite expansion in $\delta$,
\be
g_N^{\app}=\mu+\sum_{j=1}^N\delta^j h_j,\qquad
\phi_N^{\app}=\sum_{j=1}^N\delta^j \phi_j,\qquad
E_N^{\app}=-\partial_x\phi_N^{\app},
\ee
with $h_1$ as above and $h_j(0)=0$ for $j\ge 2$.
We define the Vlasov defect by
\begin{equation}\label{eq:def-RN}
R_N^{\app}
:=\partial_t g_N^{\app}+v\partial_x g_N^{\app}+E_N^{\app}\partial_v g_N^{\app}.
\end{equation}

In the present model we also introduce an elliptic defect, measuring how well $\phi_N^{\app}$ solves \eqref{eq:screened-form}:
\begin{equation}\label{eq:def-elliptic-defect}
\mathcal R_N
:= \big(\rho[g_N^{\app}]-1-\mathcal N(\phi_N^{\app})\big)-\mathcal A\phi_N^{\app}.
\end{equation}

The quantity $\mathcal R_N$ measures the mismatch in the elliptic relation \eqref{eq:screened-form} produced by truncating the expansion
of the Poisson--Boltzmann term at order $N$.
It is convenient to isolate it, because in the stability step it appears as a source term in the equation for
$\eta=\phi-\phi_N^{\app}$, together with $\rho[w]$ and the nonlinear difference $\mathcal N(\phi)-\mathcal N(\phi_N^{\app})$.

Equivalently,
\begin{equation}\label{eq:approx-poisson-with-defect}
\mathcal A\phi_N^{\app}=\rho[g_N^{\app}]-1-\mathcal N(\phi_N^{\app})-\mathcal R_N.
\end{equation}

\smallskip\noindent
\emph{Poisson--Boltzmann correction in the hierarchy.}
Expanding $\mathcal N(\sum_{j\ge 1}\delta^j\phi_j)$ in powers of $\delta$ defines homogeneous polynomials
$\mathcal N_j(\phi_1,\dots,\phi_{j-1})$ ($j\ge 2$) by
\be
\mathcal N\Big(\sum_{j\ge 1}\delta^j\phi_j\Big)=\sum_{q\ge 2}\delta^q\,\mathcal N_q(\phi_1,\dots,\phi_{q-1}).
\ee
Matching powers of $\delta$ in \eqref{eq:screened-form} gives the elliptic recursion
\be
\mathcal A\phi_1=\rho[h_1],\qquad \mathcal A\phi_j=\rho[h_j]-\mathcal N_j(\phi_1,\dots,\phi_{j-1})\quad (j\ge 2).
\ee
Writing $E_j:=-\partial_x\phi_j$ in the form $E_j=E_j^{\lin}[h_j]+E_j^{\NL}$, with
\be
E_j^{\lin}[h_j]:=-\partial_x\mathcal A^{-1}\rho[h_j],
\qquad
E_j^{\NL}:=\partial_x\mathcal A^{-1}\mathcal N_j(\phi_1,\dots,\phi_{j-1}),
\ee
the Vlasov hierarchy keeps the same structure, with an additional forcing term driven by $E_j^{\NL}$:
\be
\partial_t h_j + L_1 h_j + \sum_{\ell=1}^{j-1}E_\ell\,\partial_v h_{j-\ell}
= -E_j^{\NL}\,\mu'(v),\qquad h_j(0)=0,\quad j\ge 2.
\ee
Here $L_1$ denotes the screened linearized operator (at $\alpha=1$).

\smallskip\noindent
\emph{Bounds on the defects.}
Since the expansion of $\mathcal N$ starts at order $2$, the nonlinear fields $E_j^{\NL}$ involve only lower-order profiles
and they fit in the same induction that controls $h_j$.
Using Lemma~\ref{lem:Ainv-like-311} and Lemma~\ref{lem:N-nonlinear}, one obtains bounds of the form
\begin{equation}\label{eq:defect-bounds}
\|R_N^{\app}(t)\|_{L^1_{x,v}}\lesssim_N \delta^{N+1}e^{(N+1)\sigma t},
\qquad
\|\mathcal R_N(t)\|_{L^1_x}\lesssim_N \delta^{N+1}e^{(N+1)\sigma t},
\end{equation}
as long as $\delta e^{\sigma t}\le \theta_0\ll 1$, for a fixed $\theta_0$.

\smallskip\noindent
\emph{$L^1$ stability and control of the field.}
Let $(g,\phi)$ be the exact solution with $g(0)=g_N^{\app}(0)$.
Set $w=g-g_N^{\app}$, $\eta=\phi-\phi_N^{\app}$, $\mathcal E=E-E_N^{\app}$.
The $L^1$ estimate gives
\be
\frac{d}{dt}\|w(t)\|_{L^1_{x,v}}
\le \|\mathcal E(t)\|_{L^\infty_x}\,\|\partial_v g_N^{\app}(t)\|_{L^1_{x,v}}
+ \|R_N^{\app}(t)\|_{L^1_{x,v}}.
\ee
Subtracting the exact elliptic relation \eqref{eq:screened-form} from \eqref{eq:approx-poisson-with-defect} yields
\begin{equation}\label{eq:eta-eq}
\mathcal A\eta=\rho[w]-\big(\mathcal N(\phi)-\mathcal N(\phi_N^{\app})\big)+\mathcal R_N.
\end{equation}
Applying $\partial_x\mathcal A^{-1}$ and using Lemma~\ref{lem:Ainv-like-311} and the boundedness of $\mathcal A^{-1}$ on $L^1(\T_M)$, we obtain
\be
\|\mathcal E\|_{L^\infty_x}\lesssim
\|w\|_{L^1_{x,v}}+\|\mathcal N(\phi)-\mathcal N(\phi_N^{\app})\|_{L^1_x}+\|\mathcal R_N\|_{L^1_x}.
\ee
Similarly, applying $\mathcal A^{-1}$ to \eqref{eq:eta-eq} yields
\be
\|\eta\|_{L^1_x}\lesssim
\|w\|_{L^1_{x,v}}+\|\mathcal N(\phi)-\mathcal N(\phi_N^{\app})\|_{L^1_x}+\|\mathcal R_N\|_{L^1_x}.
\ee

Assume the bootstrap $\|\phi\|_\infty+\|\phi_N^{\app}\|_\infty\le 2\theta_0$, with $\theta_0$ small.
Then Lemma~\ref{lem:N-nonlinear} gives
$\|\mathcal N(\phi)-\mathcal N(\phi_N^{\app})\|_{L^1}\lesssim \theta_0\|\eta\|_{L^1}$,
and for $\theta_0$ small this contribution is absorbed. Using \eqref{eq:defect-bounds} we arrive at
\be
\|\mathcal E(t)\|_{L^\infty_x}\lesssim \|w(t)\|_{L^1_{x,v}}+\delta^{N+1}e^{(N+1)\sigma t}.
\ee
A Gr\"onwall argument then yields the same type of estimate as in the screened case, up to the time
\be
t_\delta:=\sigma^{-1}\log(\theta_0/\delta),
\qquad\text{so that}\qquad \delta e^{\sigma t_\delta}=\theta_0.
\ee

\smallskip\noindent
\emph{Instability at logarithmic time and scaling back.}
At $t=t_\delta$ the leading mode dominates and produces a nontrivial deviation of the density from $1$,
as in the screened case. For the field, we use \eqref{eq:screened-form} and the fact that
$\|\mathcal N(\phi)\|_{L^1}=O(\theta_0^2)$ in the bootstrap regime.
Scaling back to the original variables and tiling on $\T$ give \eqref{eq:macro-instability}.
Choosing $\delta=\varepsilon^P$ with $P$ large gives \eqref{eq:init-close}.
Finally, by (H1) and the smallness of $\delta$, one ensures $f_{0,\varepsilon}\ge 0$.

Since $\varepsilon t_\delta\sim \varepsilon|\log\varepsilon|$, we have $\varepsilon t_\delta\le \varepsilon^\alpha$
for every fixed $\alpha<1$ and $\varepsilon$ sufficiently small.
The amplification in $W^{r,1}$ follows by the same argument used for the screened model; we do not repeat it here.
\end{proof}

\bibliography{VPME-quasi}
\bibliographystyle{abbrv}

\end{document}